\documentclass[12pt]{amsart}
\makeatletter

\@addtoreset{equation}{section}
\makeatother
\usepackage[margin=3cm]{geometry}
\newenvironment{nouppercase}{%
  \renewcommand{\uppercasenonmath}[1]{}}{}
\usepackage{amsmath,amssymb,amsthm,amsfonts}
\usepackage[T1]{fontenc}
\usepackage{ytableau}
\ytableausetup{boxsize=normal,aligntableaux=center}
\allowdisplaybreaks[1]
\theoremstyle{definition}
\newtheorem{theorem}[equation]{Theorem}
\newtheorem{prop}[equation]{Proposition} 
\newtheorem{definition}[equation]{Definition} 
\newtheorem{remark}[equation]{Remark} 
\newtheorem{cor}[equation]{Corollary}
\newtheorem{prob}[equation]{Problem}
\newtheorem{lemma}[equation]{Lemma}
\newtheorem{exam}[equation]{Example}
\newtheorem{introthm}{Theorem}

\def\zetas{\zeta^{\star}}
\begin{document}

\title{Jack multiple zeta function}
\author{Maki Nakasuji}
\address{Department of Information and Communication Science, Faculty of Science, Sophia
University, 7-1 Kioi-cho, Chiyoda-ku, Tokyo, 102-8554, Japan}
\email{ nakasuji@sophia.ac.jp}
\author[W. Takeda]{Wataru Takeda}
\address{Department of Mathematics, Toho University,
2-2-1, Miyama, Funabashi-shi, Chiba 274-8510, Japan.}
\email{wataru.takeda@sci.toho-u.ac.jp}
\subjclass[2020]{05E05, 05A19, 11M32}
\keywords{Jack symmetric function, Schur multiple zeta function, multiple zeta function, Stanley's duality, interpolation}

\begin{nouppercase}
\maketitle
\end{nouppercase}
\begin{abstract}
For conjugate shapes $\lambda$ and $\lambda'$, Stanley's duality relates the Jack symmetric functions $J_\lambda(x;\alpha)$ and $J_{\lambda'}(x;1/\alpha)$. Through Stanley's combinatorial formula, it is an identity between the sums of the weights of the semi-standard Young tableaux giving the same monomial.
We introduce the Jack multiple zeta functions, which keep these weights separate, and prove that Stanley's duality lifts to them, including the case of skew shapes.
Thus the duality holds before the weights are summed up, and in fact step by step along chains of partitions. Since the duality is an identity among formal symbols, the same duality holds when the multiple zeta functions are replaced by other functions, and replacing them with the monomial quasi-symmetric functions recovers Stanley's duality.
Hence our duality is a refinement of Stanley's duality which contains it as a special case. In particular, we obtain a duality for the Schur multiple zeta functions and the classical relation between the multiple zeta and zeta-star functions.
We next show that any expansion of a Jack symmetric function in terms of Schur functions can be lifted to Jack multiple zeta functions, yielding an analog of a formula conjectured by Stanley and proved by Koike. 
We also show that the Jack multiple zeta functions of one-row shape give a one-parameter family interpolating the multiple zeta and zeta-star functions, and we discuss their values at $\{2\}^r$ and at $\{2\}^a,3,\{2\}^b$.
\end{abstract}
\section{Introduction}\label{sec:intro}

\subsection{Stanley's duality and the question of this paper}

The Jack symmetric functions $J_\lambda(x;\alpha)$ introduced by Jack \cite{ja70} are a family of symmetric functions that depend on a parameter $\alpha$. More precisely, let $\langle\cdot,\cdot\rangle_\alpha$ be the inner product on the ring of symmetric functions defined by $\langle p_\lambda,p_\mu\rangle_\alpha=\delta_{\lambda\mu}z_\lambda\alpha^{\ell(\lambda)}$ where $p_\lambda$ denotes the power sum symmetric function, $\delta_{\lambda\mu}$ is the Kronecker delta, $z_\lambda=\prod_{i\ge1}i^{m_i(\lambda)}m_i(\lambda)!$, and $\ell(\lambda)$ is the number of parts of $\lambda$. 
Here $m_i(\lambda)$ denotes the multiplicity of $i$ in $\lambda$. Then the $J_\lambda(x;\alpha)$ form the orthogonal basis which is triangular with respect to the monomial symmetric functions in the dominance order. They are normalized so that the coefficient of $x_1x_2\cdots x_n$ is $n!$ for a partition $\lambda$ of $n$.
This is the integral normalization of Stanley \cite{st89}. When $\alpha=1$, they are the Schur functions up to a constant, that is, $J_\lambda(x;1)=h_\lambda s_\lambda(x)$, where $h_\lambda$ is the product of the hook lengths of $\lambda$. When $\alpha=2$, they are the zonal polynomials, which are the zonal spherical functions of the symmetric pair $(GL(n,\mathbb{R}),O(n))$ (see \cite[Chapters~VI and VII]{mac}). 

Throughout this paper, we identify a partition $\lambda=(\lambda_1,\lambda_2,\ldots)$ with its Young diagram, that is, the set of boxes $(i,j)$ with $1\le j\le\lambda_i$. 
Thus $(i,j)\in\lambda$ means that $(i,j)$ is a box of $\lambda$, $\mu\subset\lambda$ means the inclusion of the Young diagrams, and $\lambda/\mu$ is the set of boxes of $\lambda$ not in $\mu$.
We call a box $(i,j)\in\lambda$ a corner of $\lambda$ if neither $(i+1,j)$ nor $(i,j+1)$ belongs to $\lambda$, and we write $\lambda'$ for the conjugate partition. For partitions $\mu\subset\lambda$, we denote by ${\rm SSYT}(\lambda/\mu)$ the set of semi-standard Young tableaux $M=(m_{ij})$ of skew shape $\lambda/\mu$ with positive integer entries,
whose entries are weakly increasing from left to right in each row and strictly increasing from top to bottom in each column.
We write ${\rm SSYT}(\lambda)={\rm SSYT}(\lambda/\emptyset)$.

Stanley \cite{st89} studied the combinatorial properties of the Jack symmetric functions in detail. Two of his results are the starting point of this paper. The first one is a combinatorial formula
\begin{equation}\label{eqn:intro_stanley}
J_{\lambda/\mu}(x;\alpha)=\sum_{M\in{\rm SSYT}(\lambda/\mu)}w(M)x^M,\qquad x^M=\prod_{(i,j)\in\lambda/\mu}x_{m_{ij}},
\end{equation}
where $w(M)$ is an explicit rational function of $\alpha$ \cite[Theorem~6.3]{st89} (see Theorem \ref{thm:stanley63}). The second one is a duality between the shapes $\lambda$ and $\lambda'$ \cite[Theorem~3.3]{st89}:
\begin{equation}\label{eqn:intro_stanleydual}
(-1)^{|\lambda|}\omega_{-\frac1\alpha}\bigl(J_{\lambda'}(x;1/\alpha)\bigr)=\alpha^{-|\lambda|}J_\lambda(x;\alpha),
\end{equation}
where $\omega_\beta$ is the algebra endomorphism of the ring of symmetric functions with $\omega_\beta(p_r)=\beta p_r$ for $r\ge1$. In \eqref{eqn:intro_stanley}, the monomial $x^M$ depends only on how many times each integer occurs in $M$, so that $J_\lambda$ depends on the weights $w(M)$ only through their sums over the tableaux giving the same monomial.
Accordingly, \eqref{eqn:intro_stanleydual} is an identity between these sums. The question of this paper is whether the duality \eqref{eqn:intro_stanleydual} already holds before these sums are taken, that is, at the level of the individual weights of Stanley's formula.
We show that this is the case, by replacing the monomials $x^M$ with multiple zeta functions.

\subsection{Schur multiple zeta functions}

The multiple zeta functions and the multiple zeta-star functions of the Euler--Zagier type (see \cite{h97,z94}) are defined by
\[\zeta(s_1,\ldots,s_r)=\sum_{0<m_1<\cdots<m_r}\frac{1}{m_1^{s_1}\cdots m_r^{s_r}},\qquad \zetas(s_1,\ldots,s_r)=\sum_{0<m_1\le\cdots\le m_r}\frac{1}{m_1^{s_1}\cdots m_r^{s_r}}.\]
Nakasuji, Phuksuwan and Yamasaki \cite{npy} unified them by the Schur multiple zeta functions. For a given set $\pmb s=(s_{ij})$ of variables attached to the boxes $(i,j)$ of $\lambda$, the Schur multiple zeta function of shape $\lambda$ is defined to be
\[\zeta_\lambda(\pmb s)=\sum_{M\in{\rm SSYT}(\lambda)}\frac{1}{M^{\pmb s}},\qquad M^{\pmb s}=\prod_{(i,j)\in\lambda}m_{ij}^{s_{ij}},\]
which absolutely converges if $\Re(s_{ij})\ge1$ for all $(i,j)$ and $\Re(s_{ij})>1$ for all corners $(i,j)$ of $\lambda$ \cite{npy}. We have
\begin{equation}\label{eqn:intro_zeta}
\ytableausetup{boxsize=normal}
\zeta_{(1^r)}\left(\ytableaushort{{s_1},{\vdots},{s_r}}\right)=\zeta(s_1,\ldots,s_r),\qquad \zeta_{(r)}\left(\ytableaushort{{s_1}{\cdots}{s_r}}\right)=\zetas(s_1,\ldots,s_r).
\end{equation}
Since the Schur multiple zeta functions have the structures of both the Schur functions and the multiple zeta functions, they are expected to have properties similar to those of both.
Indeed, as for the Schur functions, the Jacobi--Trudi and Giambelli formulas were shown in \cite{npy}. After that, the Pieri formulas were obtained for hook type \cite{nt} and in general \cite{nak}, and the Littlewood--Richardson rule in \cite{nak,ha1}.
As for the multiple zeta functions, the integral expressions for ribbon type were given in \cite{npy}, the shuffle product formula for hook type in \cite{ntsh}, and the duality formula and its generalization in \cite{no}. The last one was interpolated to complex functions in \cite{not}.
Moreover, the multiple zeta functions with structures similar to those of other symmetric functions such as Schur $P$-, Schur $Q$-, symplectic and orthogonal functions were introduced in \cite{nt25}; see also \cite{ha2}.

\subsection{Jack multiple zeta functions and the main result}

Following \eqref{eqn:intro_stanley}, we define the Jack multiple zeta function of shape $\lambda/\mu$ to be
\[\zeta^J_{\lambda/\mu}(\pmb s;\alpha)=\sum_{M\in{\rm SSYT}(\lambda/\mu)}\frac{w(M)}{M^{\pmb s}},\]
where $\pmb s=(s_{ij})$ is a set of variables attached to the boxes of $\lambda/\mu$, and we write $\zeta^J_\lambda=\zeta^J_{\lambda/\emptyset}$. When $\alpha=1$, all the weights $w(M)$ for $M\in{\rm SSYT}(\lambda)$ are equal to $h_\lambda$, and hence $\zeta^J_{\lambda}(\pmb s;1)=h_{\lambda}\zeta_{\lambda}(\pmb s)$.
Also, for $\lambda=(1^r)$, we have $\zeta^J_{(1^r)}(\pmb s;\alpha)=r!\,\zeta(s_1,\ldots,s_r)$ for any $\alpha$. Unlike $x^M$, the term $M^{-\pmb s}$ remembers which boxes of $M$ carry equal entries. Therefore, in $\zeta^J_{\lambda/\mu}$, tableaux giving the same monomial in \eqref{eqn:intro_stanley} may contribute to different multiple zeta functions, and $\zeta^J_{\lambda/\mu}$ keeps the weights which $J_{\lambda/\mu}$ adds up.

As an analog of $\omega_\beta$, for a parameter $\beta$ we consider the linear map $\omega_\beta$ on multiple zeta functions which merges consecutive entries with binomial coefficients:
\[\omega_\beta\bigl(\zeta(t_1,\ldots,t_\ell)\bigr)
=\sum_{\substack{r_1+\cdots+r_k=\ell\\ r_i\ge1}}\ \prod_{i=1}^{k}\binom{\beta}{r_i}\ \zeta\bigl(t_1+\cdots+t_{r_1},\ \ldots,\ t_{\ell-r_k+1}+\cdots+t_\ell\bigr).
\]
These maps satisfy $\omega_\beta\circ\omega_\gamma=\omega_{\beta\gamma}$. For variables $\pmb s=(s_{ij})$ on $\lambda/\mu$, we write $\pmb s'$ for the transposed variables on $\lambda'/\mu'$, whose box $(i,j)$ carries $s_{ji}$. Then our main result is the following.

\begin{introthm}[Theorem \ref{thm:general}]\label{thm:intro3}
For any partitions $\mu\subset\lambda$, we have
\[(-1)^{|\lambda|-|\mu|}\omega_{-\frac{1}{\alpha}}\left(\zeta_{\lambda'/\mu'}^J\left(\pmb s';\frac{1}{\alpha}\right)\right)
=\frac{1}{\alpha^{|\lambda|+|\mu|}}\zeta_{\lambda/\mu}^J\left(\pmb s;\alpha\right).\]
In particular, for any partition $\lambda$ of $n$, we have
\[(-1)^{n}\omega_{-\frac{1}{\alpha}}\left(\zeta_{\lambda'}^J\left(\pmb s';\frac{1}{\alpha}\right)\right)=\frac{1}{\alpha^{n}}\zeta_{\lambda}^J\left(\pmb s;\alpha\right).\]

\end{introthm}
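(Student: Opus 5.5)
The plan is to reduce the identity to a single horizontal strip, using the fact that Stanley's weight factors along chains of partitions. Recall from Theorem~\ref{thm:stanley63} that a tableau $M\in{\rm SSYT}(\lambda/\mu)$ whose distinct entries are $m_1<\cdots<m_k$ corresponds to a chain $\mu=\nu^0\subsetneq\nu^1\subsetneq\cdots\subsetneq\nu^k=\lambda$. Each $\theta_i=\nu^i/\nu^{i-1}$ is a horizontal strip, namely the set of boxes carrying $m_i$. Stanley's weight then factors as $w(M)=\prod_i\psi_{\nu^i/\nu^{i-1}}(\alpha)$, and the factors depend only on the strips, not on the values $m_i$. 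Since $M^{\pmb s}=\prod_i m_i^{t_i}$ with $t_i=\sum_{(a,b)\in\theta_i}s_{ab}$, summing over $m_1<\cdots<m_k$ gives
\[\zeta^J_{\lambda/\mu}(\pmb s;\alpha)=\sum_{\mu=\nu^0\subsetneq\cdots\subsetneq\nu^k=\lambda}\ \prod_{i=1}^k\psi_{\nu^i/\nu^{i-1}}(\alpha)\ \zeta(t_1,\ldots,t_k),\]
where the sum runs over chains of nonempty horizontal strips. The whole argument treats each $\zeta(t_1,\ldots,t_k)$ as a formal symbol indexed by the ordered set partition $(\theta_1,\ldots,\theta_k)$ of the boxes. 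This is why the result will hold with the zeta functions replaced by any other functions.

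On the dual side, I apply the same expansion to $\zeta^J_{\lambda'/\mu'}(\pmb s';1/\alpha)$. A chain of horizontal strips in $\lambda'/\mu'$ is, after transposition, a chain of vertical strips in $\lambda/\mu$. The sums of the transposed variables $\pmb s'$ over these strips are the sums of $\pmb s$ over the corresponding boxes of $\lambda/\mu$. Applying $\omega_{-1/\alpha}$ merges consecutive blocks, and each merged block is a union of consecutive vertical strips, hence a skew shape $\nu^{i}/\nu^{i-1}$.

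Comparing coefficients of the symbol attached to a fixed ordered partition $(\theta_1,\ldots,\theta_k)$, only refinements of this partition into consecutive vertical strips contribute on the left. The coefficient on the left therefore factors as a product over $i$. So it suffices to prove the following local identity for every horizontal strip $\nu^i/\nu^{i-1}=\lambda/\nu$:
\[(-1)^{|\lambda/\nu|}\sum_{r\ge1}\binom{-1/\alpha}{r}\sum_{\nu'=\kappa^0\subsetneq\cdots\subsetneq\kappa^r=\lambda'}\prod_{j}\psi_{\kappa^j/\kappa^{j-1}}(1/\alpha)=c\,\psi_{\lambda/\nu}(\alpha),\]
where the inner chains consist of horizontal strips of $\lambda'$. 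The constant $c$ is a power of $\alpha$, and the product of these constants over the steps must telescope to $\alpha^{-(|\lambda|+|\mu|)}$. I would also record that if $\lambda/\nu$ is not a horizontal strip, the left-hand side is $0$; this is needed for the bookkeeping, since such merged blocks do not occur on the right.

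To prove the local identity, I would use Stanley's duality for skew shapes, which the statement itself asserts at the symmetric-function level. I would express $J_{\lambda'/\nu'}(x;1/\alpha)$ in monomial quasi-symmetric functions, where $\omega_\beta$ acts by the same merging formula. I would then specialize to a single variable $x_1$: only the single-part composition survives, while $J_{\lambda/\nu}(x_1;\alpha)=\psi_{\lambda/\nu}(\alpha)x_1^{|\lambda/\nu|}$ when $\lambda/\nu$ is a horizontal strip and $0$ otherwise. Comparing the coefficients of $x_1^{|\lambda/\nu|}$ yields exactly the local identity.

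The main obstacle I expect is the normalization, namely making the powers of $\alpha$ come out as $\alpha^{-(|\lambda|+|\mu|)}$ rather than $\alpha^{-(|\lambda|-|\mu|)}$. Skew Jack functions in Stanley's integral normalization carry factors like $j_\mu$, so the local constants $c$ are not simply $\alpha^{-|\lambda/\nu|}$. I would first check the case of a single box and the one-row case by hand to fix these constants. If skew duality is not directly available, I would instead derive the local identity from the explicit hook-product formula for $\psi_{\lambda/\nu}$ in Theorem~\ref{thm:stanley63}, by induction on the number of boxes in the strip.
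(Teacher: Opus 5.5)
Your plan follows the paper's proof. You expand both sides over chains, and on the dual side the coefficient of each $\zeta(\pmb u)$ factors over the steps of $\pmb u$ (Lemma~\ref{lem:factor}). You then get the one-step identity by evaluating skew duality at a single point (Lemma~\ref{lem:local}). Your one-variable specialization, using the merging action of $\omega_\beta$ on $M_{\boldsymbol c}$, is the paper's $(\omega_\beta f)(1^z)=f(1^{\beta z})$ at $z=1$ in another form. It needs a short justification: for $\beta=q\in\mathbb Z_{>0}$, $\omega_q$ is the substitution that repeats each variable $q$ times, and the general case follows by polynomiality in $\beta$.

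The genuine problem is the normalization, which you flag but resolve incorrectly.
\begin{itemize}
\item For a skew shape, Stanley's weight is $w(M)=j_\mu(\alpha)\prod_i\psi_{\nu^i/\nu^{i-1}}(\alpha)$, not $\prod_i\psi_{\nu^i/\nu^{i-1}}(\alpha)$.
\item Likewise $J_{\lambda/\nu}(x_1;\alpha)=j_\nu(\alpha)\psi_{\lambda/\nu}(\alpha)x_1^{|\lambda/\nu|}$.
\item Once the $j$'s are in place, the local constant is exactly $c=\alpha^{-|\lambda/\nu|}$, that is, $\Sigma_{\lambda'/\nu'}(-\tfrac1\alpha;\tfrac1\alpha)=(-\alpha)^{-|\lambda/\nu|}\psi_{\lambda/\nu}(\alpha)$. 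This is the opposite of your expectation that $c$ is "not simply $\alpha^{-|\lambda/\nu|}$".
\item The product of the step constants is therefore $\alpha^{-(|\lambda|-|\mu|)}$. No telescoping of them yields $\alpha^{-(|\lambda|+|\mu|)}$.
\end{itemize}
The missing factor $\alpha^{-2|\mu|}$ is global. It is the ratio of the prefactors $j_{\mu'}(1/\alpha)$ and $j_\mu(\alpha)$ of the two chain expansions, given by $j_{\mu'}(1/\alpha)=\alpha^{-2|\mu|}j_\mu(\alpha)$ (Lemma~\ref{lem:jtrans}). This holds because conjugation swaps arm and leg, and each box then contributes a factor $\alpha^{-2}$.

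As written, your argument proves only
$(-1)^d\omega_{-1/\alpha}\bigl(\zeta^J_{\lambda'/\mu'}(\pmb s';\tfrac1\alpha)/j_{\mu'}(\tfrac1\alpha)\bigr)=\alpha^{-d}\zeta^J_{\lambda/\mu}(\pmb s;\alpha)/j_\mu(\alpha)$. You need the $j$-identity to pass to the stated theorem. For $\mu=\emptyset$ the issue disappears.

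You also cannot take skew duality from the statement itself, since that would be circular. Theorem~\ref{thm:stanley33} covers only straight shapes. You can either cite skew duality as the Jack limit of Macdonald's skew duality (Chapter VI, (7.16)), or derive it as in Proposition~\ref{prop:skewdual}:
\begin{itemize}
\item apply $(-1)^{|\nu|}\omega_{-1/\alpha}$ to the coproduct $J_{\nu'}(x,y;1/\alpha)=\sum_{\tau\subset\nu'}j_\tau(1/\alpha)^{-1}J_\tau(x;1/\alpha)J_{\nu'/\tau}(y;1/\alpha)$;
\item use Theorem~\ref{thm:stanley33} on $J_{\nu'}$ and on each $J_{\mu'}(x;1/\alpha)$;
\item compare coefficients of the linearly independent $J_\mu(x;\alpha)$.
\end{itemize}
That derivation again uses $j_{\mu'}(1/\alpha)=\alpha^{-2|\mu|}j_\mu(\alpha)$, so this lemma is the one ingredient your plan is missing. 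With it, and with the $j_\mu$ prefactor restored in your chain expansion, the argument goes through.
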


When $\alpha=1$, Theorem \ref{thm:intro3} gives the duality $(-1)^n\omega_{-1}(\zeta_{\lambda'}(\pmb s'))=\zeta_\lambda(\pmb s)$ for the Schur multiple zeta functions, and for $\lambda=(r)$ this is exactly the classical expression of $\zetas(s_1,\ldots,s_r)$ as the sum of the multiple zeta functions obtained by merging consecutive variables.

Theorem \ref{thm:intro3} is finer than \eqref{eqn:intro_stanleydual}. For example, let $\lambda=(2,1)$, which is self-conjugate. For $i<j<k$, the monomial $x_ix_jx_k$ in $J_{(2,1)}$ comes from two tableaux, according to whether $j$ is in the box $(1,2)$ or in the box $(2,1)$, with the weights
\[\frac{2(\alpha+2)}{\alpha+1}\qquad\text{and}\qquad\frac{2(2\alpha+1)}{\alpha+1},\]
respectively, and \eqref{eqn:intro_stanleydual} only involves their sum $6$. In $\zeta^J_{(2,1)}(\pmb s;\alpha)$, these two weights are the coefficients of the two different terms $\zeta(s_{11},s_{12},s_{21})$ and $\zeta(s_{11},s_{21},s_{12})$.
On the left-hand side of Theorem \ref{thm:intro3}, the transposition of the variables interchanges these two terms, and replacing $\alpha$ by $1/\alpha$ interchanges the two weights. Therefore, Theorem \ref{thm:intro3} matches the two tableaux individually. The other terms, on which $\omega_{-1/\alpha}$ acts nontrivially,
are matched in the same way (see Example \ref{ex:factor21}).

The proof of Theorem \ref{thm:intro3} shows that the duality holds step by step. Both sides of Theorem \ref{thm:intro3} are linear combinations of the multiple zeta functions $\zeta(\pmb u)$ attached to sequences of partitions $\pmb u:\mu=\lambda^{(0)}\subsetneq\lambda^{(1)}\subsetneq\cdots\subsetneq\lambda^{(m)}=\lambda$.
On the right-hand side, the coefficient of $\zeta(\pmb u)$ is a product over the steps $\lambda^{(k-1)}\subset\lambda^{(k)}$ by Stanley's formula, and we show that the same holds on the left-hand side (Lemma \ref{lem:factor}). 
Thus Theorem \ref{thm:intro3} reduces to an identity for each single step, and this identity follows from the skew version of \eqref{eqn:intro_stanleydual} (Proposition \ref{prop:skewdual}) by specializing the variables to $(1^z)$ and putting $z=1$ (Lemma \ref{lem:local}).
The point is that the binomial coefficients $\binom{-1/\alpha}{r}$ in $\omega_{-1/\alpha}$ are exactly what the specialization of $\omega_{-1/\alpha}$ on symmetric functions produces, namely $(\omega_\beta f)(1^z)=f(1^{\beta z})$ (Lemma \ref{lem:omspec}).
In this sense, Stanley's duality is used only through its skew version at a single specialization.

Since the proof uses only the formal independence of the multiple zeta functions, Theorem \ref{thm:intro3} also holds for the functions obtained by combining Stanley's weights with the generalization of the Schur functions by Bachmann and Charlton \cite{bc}. In particular, replacing the multiple zeta functions with the monomial quasi-symmetric functions, we recover the skew version of Stanley's duality \eqref{eqn:intro_stanleydual}.
Thus Theorem \ref{thm:intro3} is a refinement of Stanley's duality which contains it as a special case (Remark \ref{rem:general_f}).

\subsection{Further results}

We next consider the expansion of $J_\lambda$ in terms of the Schur functions. For general $\alpha$, the weight $w(M)$ depends on the positions of the equal entries of $M$, and such an expansion cannot be lifted directly to $\zeta^J_\lambda$.
To overcome this problem, we take the summation of the Jack multiple zeta functions over the permutations of the variables, as in \cite{nt}. Here, $\sum_{\rm sym}$ means the summation over the permutations of all the variables except for the one in the box $(1,1)$. 
\begin{introthm}[Theorem \ref{thm:sym_transfer}]\label{thm:intro2}
Let $\lambda$ be a partition of $n$ and let
\[J_\lambda(x;\alpha)=\sum_{\mu}c_{\lambda\mu}(\alpha)s_\mu(x)\]
be the expansion of $J_\lambda$ in the Schur basis. Then we have
\[\sum_{\rm sym}\zeta^J_\lambda(\pmb s;\alpha)=\sum_{\mu}c_{\lambda\mu}(\alpha)\sum_{\rm sym}\zeta_\mu(\pmb s).\]
\end{introthm}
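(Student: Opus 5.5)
The idea is to expand both sides of Theorem~\ref{thm:intro2} in the formal basis of multiple zeta functions indexed by set compositions of the boxes of $\lambda$. Then the symmetrization $\sum_{\rm sym}$ can be compared with the monomial expansion of symmetric functions. For any weighted sum $\sum_{M\in{\rm SSYT}(\nu)}c(M)M^{-\pmb s}$ with $|\nu|=n$, group the tableaux by the ordered set partition $(B_1,\ldots,B_k)$ of the boxes of $\nu$ into the level sets of $M$ with increasing values. This gives a linear combination of $\zeta(\sum_{b\in B_1}s_b,\ldots,\sum_{b\in B_k}s_b)$.

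Next apply $\sum_{\rm sym}$, the sum over all bijections of the variables onto the boxes other than $(1,1)$, with the box $(1,1)$ always carrying its fixed variable $s_{11}$. After this symmetrization, a term depends only on two data. The first is the composition $(|B_1|,\ldots,|B_k|)$ of $n$. The second is that $(1,1)\in B_1$, which always holds since $(1,1)$ carries the minimal entry. Each symmetrized term equals the sum over all ways of distributing the remaining $n-1$ labelled variables into blocks of the prescribed sizes, with $s_{11}$ in the first block. The multiplicity is uniform, namely $\prod_i |B_i|!$ divided by the size of the first block, up to an overall constant.

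Consequently $\sum_{\rm sym}$ of the weighted sum depends on the weights only through the numbers
\[
a_\gamma=\sum_{M:\ \mathrm{content}(M)=\gamma} c(M),
\]
one for each composition $\gamma$ of $n$. These are exactly the coefficients of $x^\gamma$ in $\sum_M c(M)x^M$. Concretely, there is a linear map $\Phi$, from quasi-symmetric functions of degree $n$ to symmetrized zeta expressions, with
\[
\sum_{\rm sym}\sum_M c(M)M^{-\pmb s}=\Phi\Bigl(\sum_M c(M)x^M\Bigr).
\]
One can take $\Phi(M_\gamma)=\sum_{\rm sym}\zeta(\ldots)$ for a fixed representative arrangement of $\gamma$, suitably normalized. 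The main care point is verifying that this normalization is independent of the shape $\nu$ and of the tableau. The counting above shows that it depends only on $\gamma$, because after symmetrization the orbit of a given block structure under all permutations of the non-$(1,1)$ variables has size determined by $\gamma$ alone.

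With $\Phi$ in hand, the theorem follows. Applying it to $c(M)=w(M)$ gives $\sum_{\rm sym}\zeta^J_\lambda=\Phi(J_\lambda)$ by \eqref{eqn:intro_stanley}. Applying it to $c\equiv1$ on ${\rm SSYT}(\mu)$ gives $\sum_{\rm sym}\zeta_\mu=\Phi(s_\mu)$. Linearity of $\Phi$ and $J_\lambda=\sum_\mu c_{\lambda\mu}s_\mu$ then conclude. The shapes $\mu$ in the expansion all have $n$ boxes, so the variable sets can be identified by fixing $s_{11}$ at the box $(1,1)$ and matching the rest arbitrarily, which is harmless after symmetrization.

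The main obstacle is the bookkeeping for the role of $s_{11}$. Its block must be the first one, and fixing $s_{11}$ rather than symmetrizing it is precisely what makes the orbit count depend only on $\gamma$. I would check this on the example $\lambda=(2,1)$ first.
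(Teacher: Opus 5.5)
Your proposal is correct and follows essentially the paper's argument. The paper's Lemma~\ref{lem:orbit} is exactly your orbit--stabilizer count: it gives $\sum_{\sigma\in\mathfrak S_{n-1}}\zeta(\sigma\pmb t_T)=e(\boldsymbol c)\,O_{\boldsymbol c}(\pmb s)$ with $e(\boldsymbol c)=(c_1-1)!\,c_2!\cdots c_\ell!$, which is your ``$\prod_i|B_i|!$ divided by the size of the first block''. Both sides then become $\sum_{\boldsymbol c}e(\boldsymbol c)\,([x^{\boldsymbol c}]f)\,O_{\boldsymbol c}(\pmb s)$, which is your $\Phi(f)$, and Theorem~\ref{thm:sym_transfer} follows by linearity.

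The one point you leave implicit is that the grouping requires $c(M)$ to depend only on the level-set chain of $M$, not on the actual entries. This holds for Stanley's weights because empty steps contribute the factor $1$ (Theorem~\ref{thm:stanley63}), and it holds trivially for $c\equiv1$.
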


In particular, applying Theorem \ref{thm:intro2} to a formula conjectured by Stanley and subsequently proved by Koike \cite{ko93} (Theorem \ref{thm:stanley47}), we obtain an explicit formula for $\lambda=(2^i,1^j)$.

Finally, we consider the one-row case. By \eqref{eqn:intro_zeta}, the functions $\zeta^J_{(r)}$ are the natural candidates to connect the multiple zeta and zeta-star functions. 
\begin{introthm}[Lemma \ref{lem:1r}, Corollaries \ref{cor:interp} and \ref{cor:s2}]\label{thm:intro1}
For an integer $t\ge1$, put $g_\alpha(t)=\frac{1}{t!}\prod_{j=1}^{t-1}(1+\alpha j)$. Then for $r\ge1$ we have \[
\ytableausetup{boxsize=normal}
\frac{1}{r!}\zeta_{(r)}^J\left(\ytableaushort{{s_{1}}{\cdots}{s_{r}}};\alpha\right)
=\sum_{\substack{\boldsymbol{t}=(s_1 \square s_2 \square \cdots \square s_r)\\{\rm type}(\boldsymbol{t})=(t_1,\ldots,t_\ell)}}
\prod_{i=1}^{\ell}g_\alpha(t_i)\,\zeta(\boldsymbol{t}),
\]
where $\boldsymbol{t}$ runs over all indices obtained from $(s_1,\ldots,s_r)$ by replacing each $\square$ by a comma or a plus sign, and $t_i$ is the number of variables in the $i$-th entry of $\boldsymbol{t}$. In particular, $\frac{1}{r!}\zeta^J_{(r)}$ is equal to $\zeta(s_1,\ldots,s_r)$ when $\alpha=-1$ and to $\zetas(s_1,\ldots,s_r)$ when $\alpha=1$. Moreover, for $\alpha\ne0$ we have
\[
\sum_{r\ge0}\frac{1}{r!}\zeta_{(r)}^J\left(\{2\}^r;\alpha\right)x^r
=\left(\frac{\sin \pi\sqrt{\alpha x}}{\pi\sqrt{\alpha x}}\right)^{-\frac1\alpha}.
\]
\end{introthm}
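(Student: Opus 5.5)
Our plan has three parts: the one-row formula, the two special values of $\alpha$, and the generating function at $\{2\}^r$.

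\textbf{The one-row formula.} For $\lambda=(r)$ a tableau $M$ is a weakly increasing sequence $m_1\le\cdots\le m_r$. It is determined by its distinct values $n_1<\cdots<n_\ell$ together with the lengths $t_1,\ldots,t_\ell$ of the consecutive runs of equal entries. The term $M^{-\pmb s}$ is then $\prod_i n_i^{-(\text{sum of the $s$'s in run } i)}$. Summing over the $n_i$ therefore gives exactly $\zeta(\boldsymbol t)$, where $\boldsymbol t$ is obtained by putting plus signs inside runs and commas between runs.

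It remains to compute Stanley's weight $w(M)$ for a one-row tableau. Via the chain $\emptyset\subset\lambda^{(1)}\subset\cdots$ of Theorem~\ref{thm:stanley63}, each step adds a horizontal strip in row $1$, extending the row from length $a$ to $a+t$. We expect the weight to factor over the steps. Since all boxes lie in the same row and the first row is always the top row, we expect the factor of each step to depend only on $t$. The natural target is a factor $\prod_{j=1}^{t-1}(1+\alpha j)$, whose full product reproduces the normalization
\[
J_{(r)}=\prod_{j=1}^{r-1}(1+j\alpha)\cdot(\text{coefficient of }m_{(r)}).
\]
We must then show that the total weight equals $r!\prod_i g_\alpha(t_i)$. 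To check this, note that the coefficient of $x_1\cdots x_r$ (all $t_i=1$) must be $r!$, which forces the normalizing constant. Then verify, from Stanley's explicit hook factors $\alpha a+l+1$ and $\alpha(a+1)+l$ restricted to row one, that each run of length $t$ contributes $\prod_{j<t}(1+\alpha j)/t!$ relative to that normalization. This bookkeeping with Stanley's factors is the step I expect to be the main obstacle: the ratio of products over the boxes of row one before and after adding $t$ boxes must telescope to the claimed expression. As a cross-check we use $J_{(r)}=\sum_\mu \bigl(\prod\ldots\bigr)m_\mu$ with the known coefficient $r!\prod_i g_\alpha(\mu_i)$; this is consistent because summing our formula over all $\boldsymbol t$ of a fixed multiset type gives the same result.

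\textbf{The special values of $\alpha$.} At $\alpha=-1$ we have $g_{-1}(t)=0$ for $t\ge2$, since the factor $1-j$ vanishes at $j=1$, and $g_{-1}(1)=1$. Only $\boldsymbol t=(s_1,\ldots,s_r)$ survives, giving $\zeta(s_1,\ldots,s_r)$. At $\alpha=1$ we have $g_1(t)=(t-1)!/t!\cdot t!/t!$. More precisely, $\prod_{j<t}(1+j)=t!$, so $g_1\equiv1$, and the sum over all $\boldsymbol t$ is the classical expansion of $\zetas(s_1,\ldots,s_r)$.

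\textbf{The generating function at $\{2\}^r$.} Substitute $s_i=2$. A term of type $(t_1,\ldots,t_\ell)$ becomes $\prod_i g_\alpha(t_i)\,\zeta(2t_1,\ldots,2t_\ell)$. Rather than sum over compositions, it is easier to return to the definition. The sum
\[
\sum_r \frac{x^r}{r!}\zeta^J_{(r)}(\{2\}^r)
\]
runs over all multisets of positive integers, where the value $n$ occurring with multiplicity $t$ contributes $g_\alpha(t)(x/n^2)^t$. This factors as an Euler-type product
\[
\prod_{n\ge1}\sum_{t\ge0}g_\alpha(t)\,y^t,\qquad y=\frac{x}{n^2}.
\]
Since $g_\alpha(t)=\binom{-1/\alpha}{t}(-\alpha)^t$, the inner series is $(1-\alpha y)^{-1/\alpha}$. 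The product then equals
\[
\Bigl(\prod_n\bigl(1-\tfrac{\alpha x}{n^2}\bigr)\Bigr)^{-1/\alpha}=\left(\frac{\sin\pi\sqrt{\alpha x}}{\pi\sqrt{\alpha x}}\right)^{-1/\alpha},
\]
with the power taken termwise for formal power series. This is justified by convergence for small $|x|$.
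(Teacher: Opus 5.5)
Your handling of $\alpha=\pm1$ and of the generating function at $\{2\}^r$ is correct and is essentially the paper's argument (Corollary~\ref{cor:interp}, Proposition~\ref{prop:genfun}, Corollary~\ref{cor:s2}).

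\textbf{The gap.} It lies in the only step that needs real work: the weight formula $w(M)=r!\prod_i g_\alpha(t_i)$ for a one-row tableau. You never derive it; you call it the main obstacle and stop. Moreover, the heuristic you give for it is false. The factor of a step $(a)\subset(a+t)$ is not a function of $t$ alone. If it were some $f(t)$, the tableau with all entries distinct would have weight $f(1)^r$, which cannot equal $r!$ for every $r$.

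\textbf{The missing computation.} It is short and comes straight from the definitions of $B$ and $C$:
\begin{itemize}
\item The old boxes $(1,j)$ with $j\le a$ have no new box in their column, so $B=\alpha(a+t-j+1)$ and $C=\alpha(a-j+1)$.
\item Each new box $(1,j)$ with $j>a$ has $B=1+\alpha(a+t-j)$.
\end{itemize}
Hence
\[
\psi_{(a+t)/(a)}(\alpha)=\binom{a+t}{t}\prod_{j=0}^{t-1}(1+\alpha j).
\]
Multiplying over the steps, with $\tau_k=t_1+\cdots+t_k$, the binomials give $\prod_k\binom{\tau_k}{t_k}=r!/(t_1!\cdots t_\ell!)$. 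This yields $w(M)=r!\prod_i g_\alpha(t_i)$. So the $r!$ and the $1/t_i!$ come from these $a$-dependent binomial coefficients, not from a normalizing constant fixed by the coefficient of $x_1\cdots x_r$. This is exactly the computation in the paper's proof of Lemma~\ref{lem:1r}.

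\textbf{An alternative fix.} Your cross-check can also be turned into a proof, but not as you phrase it. A one-row tableau is determined by its content. Hence $w(M)$ is exactly the coefficient of the single monomial $x_{n_1}^{t_1}\cdots x_{n_\ell}^{t_\ell}$ in $J_{(r)}$, i.e.\ $[m_\mu]J_{(r)}$ with $\mu$ the decreasing rearrangement of $(t_1,\ldots,t_\ell)$. The comparison is monomial by monomial, so no sum over the $\boldsymbol t$ of a fixed multiset type enters; such a sum would overcount by the number of distinct rearrangements. The known expansion $[m_\mu]J_{(r)}=r!\prod_i g_\alpha(\mu_i)$ follows from $J_{(r)}=\alpha^r r!\,g^{(\alpha)}_r$ with $\sum_r g^{(\alpha)}_r y^r=\prod_i(1-x_iy)^{-1/\alpha}$ (Macdonald, Ch.~VI, \S10). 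Quoting it gives the weight at once.

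Either route closes the gap. As written, however, the central identity of the lemma is assumed rather than proved.
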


Theorem \ref{thm:intro1} shows that the one-row Jack multiple zeta functions interpolate the multiple zeta and zeta-star functions, and this interpolation is different from that of Yamamoto \cite{y13}.
We also consider the case where one $3$ is inserted among $2$'s as in Zagier's evaluation \cite{z12}. We obtain a generating function and a reflection formula between the parameters $\alpha$ and $-\alpha$ (Theorem \ref{thm:refl}). At $\alpha=1$, it reduces to the relation between the multiple zeta-star values and the multiple zeta values of this type found by Zagier, so that it interpolates Zagier's relation by the parameter $\alpha$. It also gives a divisibility by $1-\alpha^2$ modulo Zagier's space (Corollary \ref{cor:refl}).
Numerical experiments in weight $11$ suggest further structure, which we formulate as a problem.

\subsection{Organization of the paper}

In Section \ref{sec:prelim}, we review Stanley's results on the Jack symmetric functions, and in Section \ref{sec:jackzeta}, we define the Jack multiple zeta functions and the maps $\omega_\beta$. In Section \ref{sec:duality}, we prove Theorem \ref{thm:intro3}. In Section \ref{sec:2i1j}, we prove Theorem \ref{thm:intro2} and give the formula for $\lambda=(2^i,1^j)$. In Section \ref{sec:omega}, we study the one-row case, where we prove Theorem \ref{thm:intro1} and discuss the values at $\{2\}^a,3,\{2\}^b$.
In Appendix \ref{app:r1}, we give a direct proof of Theorem \ref{thm:intro3} for $\lambda=(r,1)$ by a telescoping identity.

\section{Preliminaries}\label{sec:prelim}

In this section, we recall Stanley's combinatorial formula and duality for the Jack symmetric functions. Throughout this paper, $(x)_t$ denotes the falling factorial
\[(x)_t=x(x-1)\cdots(x-t+1)\qquad (t\ge1),\qquad (x)_0=1,\]
so that $\dfrac{(x)_t}{t!}=\dbinom{x}{t}$ and $\displaystyle\sum_{t\ge0}\binom{x}{t}y^t=(1+y)^x$.

For a partition $\lambda$, we write $\lambda'$ for the conjugate partition, so that $\lambda'_j$ is the length of the $j$-th column. For a box $s=(i,j)\in\lambda$ we write $a_\lambda(s)=\lambda_i-j$ and $l_\lambda(s)=\lambda'_j-i$ for its arm and leg. For partitions $\mu\subset\lambda$, the skew diagram $\lambda/\mu$ is a horizontal strip if it contains at most one box in each column.
We denote by $J_\lambda(x;\alpha)$ the Jack symmetric function with parameter $\alpha$ in the integral normalization of \cite{st89} (see also \cite[Chapter~VI, \S10]{mac}), and by $J_{\lambda/\mu}(x;\alpha)$ the skew Jack symmetric function of \cite[\S6]{st89}.

For partitions $\mu\subset\lambda$ we define $B_{\lambda,\mu}(s)$ for $s=(i,j)\in\lambda$ and $C_{\lambda,\mu}(s)$ for $s=(i,j)\in\mu$ by
\[B_{\lambda,\mu}((i,j))=\left\{\begin{array}{cl}
  \lambda_j'-i+\alpha(\lambda_i-j+1)   &  \text{if $\lambda/\mu$ does not contain a box}\\
  &\text{in the same column as $(i,j)$,}\\
  \lambda_j'-i+1+\alpha(\lambda_i-j)    & \text{otherwise,}
\end{array}
\right.\]
\[C_{\lambda,\mu}((i,j))=\left\{\begin{array}{cl}
  \mu_j'-i+\alpha(\mu_i-j+1)   &  \text{if $\lambda/\mu$ does not contain a box}\\
  &\text{in the same column as $(i,j)$,}\\
  \mu_j'-i+1+\alpha(\mu_i-j)    & \text{otherwise,}
\end{array}
\right.\]
and
\[j_\lambda=\prod_{s\in\lambda}\bigl(l_\lambda(s)+\alpha(a_\lambda(s)+1)\bigr)\bigl(l_\lambda(s)+1+\alpha a_\lambda(s)\bigr).\]

\begin{theorem}[{\cite[Theorem 6.3]{st89}}]\label{thm:stanley63}
    \[J_{\lambda/\mu}=\sum_{T}w(T)x^T,\]
    summed over all semi-standard Young tableaux $T$ of shape $\lambda/\mu$. Here $T$ is identified with the sequence $\mu=\lambda^{(0)}\subset\lambda^{(1)}\subset\cdots$, where $\lambda^{(i)}$ consists of $\mu$ and the boxes of $T$ with entries at most $i$.
Each $\lambda^{(i)}/\lambda^{(i-1)}$ is a (possibly empty) horizontal strip, and $\lambda^{(i)}=\lambda$ for $i$ large. We put $x^T=\prod_{i\ge1}x_i^{|\lambda^{(i)}/\lambda^{(i-1)}|}$ and, for any $r$ with $\lambda^{(r)}=\lambda$,
    \[w(T)=\frac{\displaystyle j_{\mu}\prod_{i=1}^r\prod_{s\in\lambda^{(i)}}B_{\lambda^{(i)},\lambda^{(i-1)}}(s)}{\displaystyle \prod_{i=1}^r\prod_{s\in\lambda^{(i-1)}}C_{\lambda^{(i)},\lambda^{(i-1)}}(s)}.\]
    This does not depend on $r$, because a step with $\lambda^{(i)}=\lambda^{(i-1)}$ contributes the factor $1$.
\end{theorem}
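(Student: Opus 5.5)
The plan is to combine two standard ingredients: a branching rule for skew Jack functions in several sets of variables, and an explicit evaluation of $J_{\lambda/\nu}$ in a single variable, which comes from a Pieri rule. Throughout I work with the inner product $\langle\cdot,\cdot\rangle_\alpha$, for which $\langle J_\lambda,J_\mu\rangle_\alpha=\delta_{\lambda\mu}j_\lambda$. I take the skew function $J_{\lambda/\mu}$ to be the one characterized by
\[\langle J_{\lambda/\mu},f\rangle_\alpha=\frac{j_\mu}{j_\lambda}\cdots\]
up to the normalization in \cite[\S6]{st89}. Concretely, I would use $J_{\lambda/\mu}=\sum_\nu \frac{j_\lambda}{j_\nu}\,\frac{\langle J_\lambda,J_\mu J_\nu\rangle_\alpha}{j_\lambda\, j_\mu^{-1}\cdots}J_\nu$. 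The precise constants should be fixed once, so that $J_{\lambda/\emptyset}=J_\lambda$.

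\textbf{Step 1 (branching).} Since $p_r(x,y)=p_r(x)+p_r(y)$ and the $p_\lambda$ are orthogonal, the coproduct is adjoint to multiplication under $\langle\cdot,\cdot\rangle_\alpha$. From this I would derive
\[J_{\lambda/\mu}(x,y)=\sum_{\mu\subset\nu\subset\lambda}c_{\lambda\nu\mu}\,J_{\lambda/\nu}(x)\,J_{\nu/\mu}(y)\]
with explicit ratios $c_{\lambda\nu\mu}$ built from the norms $j$. Iterating over the variables $x_1,x_2,\ldots$ writes $J_{\lambda/\mu}(x_1,\ldots,x_r)$ as a sum over chains $\mu=\lambda^{(0)}\subset\cdots\subset\lambda^{(r)}=\lambda$. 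Each summand is a product of one-variable pieces $J_{\lambda^{(i)}/\lambda^{(i-1)}}(x_i)$ together with norm ratios. In the product these ratios telescope, and only the factor $j_\mu$ should survive. This bookkeeping accounts for the $j_\mu$ in the numerator of $w(T)$.

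\textbf{Step 2 (one variable).} In one variable only monomials $m_{(k)}$ survive, so $J_{\lambda/\nu}(x_1)=c\,x_1^{|\lambda/\nu|}$. The coefficient $c$ is a multiple of $\langle J_{\lambda/\nu},g_k\rangle_\alpha$, where $g_k$ is the element dual to $m_{(k)}$; this $g_k$ is proportional to $J_{(k)}$. By adjointness, this reduces to the coefficient of $J_\lambda$ in the product $J_\nu J_{(k)}$. That coefficient is the Jack Pieri rule. It is nonzero only when $\lambda/\nu$ is a horizontal strip, which is exactly the semistandard condition, and it equals
\[\prod_{s\in\lambda}B_{\lambda,\nu}(s)\Big/\prod_{s\in\nu}C_{\lambda,\nu}(s)\]
after normalization.

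\textbf{Step 3 (Pieri rule, the main obstacle).} I would obtain the Pieri rule either from Macdonald's $(q,t)$-Pieri formula \cite[VI (6.24)]{mac} in the limit $q=t^\alpha$, $t\to1$, or directly, following Stanley, via the Laplace–Beltrami operator characterizing $J_\lambda$. The hard part is checking that the arm/leg factors of the limit reorganize exactly into $B_{\lambda,\nu}$ and $C_{\lambda,\nu}$, with the column-dependent case distinction, once the $j$-norms are included. I would first try the $q,t$ limit and compare factor by factor, column by column. Boxes in columns meeting the strip shift the leg by one, which gives the ``otherwise'' cases of $B$ and $C$; all other boxes give the first cases.

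\textbf{Step 4 (assembly).} Multiplying the single-step factors along the chain and using the telescoping from Step 1 gives $w(T)$. Identifying chains of horizontal strips with tableaux $T$ then yields the formula of Theorem \ref{thm:stanley63}. Finally, independence from $r$ is immediate, because for an empty step $B$ and $C$ coincide box by box.
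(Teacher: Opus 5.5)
The paper gives no proof of this statement. It is quoted from Stanley and used as a black box; the paper's Lemma \ref{lem:coprod} even goes the other way, deducing the case $\mu=\emptyset$ of your Step 1 from the tableau formula. Your outline is the standard derivation: a Pieri rule, a one-variable evaluation, branching one variable at a time, and telescoping of the norms. This is essentially the argument of Macdonald, Ch.~VI, \S\S6--7, specialized as in \S10, and it goes through. Two points must be made precise before it is a proof.

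First, the prefactor $j_\mu$ in $w(T)$ is exactly the normalization-dependent part of the statement, so the definition of $J_{\lambda/\mu}$ cannot be left as ``$\cdots$''. The requirement $J_{\lambda/\emptyset}=J_\lambda$ does not determine it: the paper's $\Phi_{\lambda/\mu}=J_{\lambda/\mu}/j_\mu$ also satisfies it, and its weights carry no $j_\mu$. Stanley's normalization is $\langle J_{\lambda/\mu},f\rangle_\alpha=\langle J_\lambda,J_\mu f\rangle_\alpha$ for all $f$, so that $J_{\mu/\mu}=j_\mu$. With this normalization:
\begin{enumerate}
\item adjointness gives $c_{\lambda\nu\mu}=1/j_\nu$ in Step 1;
\item since $J_{(k)}=\alpha^k k!\,g_k$, Step 2 gives $J_{\lambda/\nu}(x_1)=\frac{j_\lambda}{\alpha^k k!}\,\pi_{\lambda/\nu}\,x_1^k$, where $\pi_{\lambda/\nu}$ is the coefficient of $J_\lambda$ in $J_\nu J_{(k)}$;
\item the Pieri rule you need is therefore precisely $\pi_{\lambda/\nu}=\alpha^k k!\,j_\nu\prod_{s\in\lambda}B_{\lambda,\nu}(s)\big/\bigl(j_\lambda\prod_{s\in\nu}C_{\lambda,\nu}(s)\bigr)$;
\item along a chain the norms give $\prod_{i=1}^{r}j_{\lambda^{(i-1)}}\big/\prod_{i=1}^{r-1}j_{\lambda^{(i)}}=j_\mu$, as you expected.
\end{enumerate}

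Second, the heuristic in Step 3 (``boxes in columns meeting the strip shift the leg by one'') is not what produces the case distinction. Put $h^*_\lambda(s)=l_\lambda(s)+\alpha(a_\lambda(s)+1)$ and $h_*^\lambda(s)=l_\lambda(s)+1+\alpha a_\lambda(s)$. Then $B_{\lambda,\nu}(s)$ equals $h_*^\lambda(s)$ if the column of $s$ meets $\lambda/\nu$, and $h^*_\lambda(s)$ otherwise; likewise $C_{\lambda,\nu}$ with $\nu$. Since $\lambda$ and $\nu$ coincide outside those columns,
\[
\frac{\prod_{s\in\lambda}B_{\lambda,\nu}(s)}{\prod_{s\in\nu}C_{\lambda,\nu}(s)}
=\frac{\prod_{s\in\lambda}h_*^\lambda(s)}{\prod_{s\in\nu}h_*^\nu(s)}\,
\prod_{s}\frac{h_*^\nu(s)\,h^*_\lambda(s)}{h^*_\nu(s)\,h_*^\lambda(s)},
\]
where the last product runs over the boxes $s\in\nu$ whose row meets $\lambda/\nu$ but whose column does not.
\begin{itemize}
\item The first factor is the change of normalization $J_\lambda=\prod_{s\in\lambda}h_*^\lambda(s)\,P^{(\alpha)}_\lambda$.
\item The second factor is the limit $q=t^\alpha$, $t\to1$, of Macdonald's $\psi_{\lambda/\nu}$ in VI (6.24)(ii), because $b(s)\to h_*(s)/h^*(s)$.
\end{itemize}
This is the factor-by-factor comparison you need to carry out.

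It also shows that, once Macdonald's theory is imported, Steps 1--2 are redundant. One has $J_{\lambda/\mu}=j_\mu\bigl(\prod_{s\in\lambda}h_*^\lambda(s)/\prod_{s\in\mu}h_*^\mu(s)\bigr)P^{(\alpha)}_{\lambda/\mu}$, and the Jack limit of VI (7.13') is the theorem. What your route buys instead is a self-contained reduction in which the Jack Pieri rule is the only nontrivial input.
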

\begin{exam}\label{ex:intro21}
Discarding the empty steps, a tableau $T$ of shape $\lambda$ gives a chain of distinct shapes, and $w(T)$ is the product over its steps $\mu\subset\nu$ of the factors
\[\prod_{s\in\nu}B_{\nu,\mu}(s)\Big/\prod_{s\in\mu}C_{\nu,\mu}(s).\]
We list the values of $B$ and $C$ for the boxes in the order $(1,1),(1,2),(2,1)$:
\[
\begin{array}{l|l|l|c}
\text{step }\mu\subset\nu & B_{\nu,\mu}(s),\ s\in\nu & C_{\nu,\mu}(s),\ s\in\mu & \text{factor}\\\hline
\emptyset\subset(1) & 1 & & 1\\
\emptyset\subset(2) & 1+\alpha,\ 1 & & 1+\alpha\\
(1)\subset(2) & 2\alpha,\ 1 & \alpha & 2\\
(1)\subset(1,1) & 2,\ 1 & 1 & 2\\
(1)\subset(2,1) & \alpha+2,\ 1,\ 1 & 1 & \alpha+2\\[2pt]
(2)\subset(2,1) & \alpha+2,\ \alpha,\ 1 & \alpha+1,\ \alpha & \frac{\alpha+2}{\alpha+1}\\[4pt]
(1,1)\subset(2,1) & 2\alpha+1,\ 1,\ \alpha & \alpha+1,\ \alpha & \frac{2\alpha+1}{\alpha+1}
\end{array}
\]
\begin{enumerate}
\item Let $\lambda=(2)$. There are two chains:
\[
\begin{array}{l|l|l|c}
\text{chain} & \text{boxes added} & \text{monomials} & \text{weight}\\\hline
\emptyset\subset(2) & (1,1),(1,2) & x_i^2 & 1+\alpha\\
\emptyset\subset(1)\subset(2) & (1,1);\ (1,2) & x_ix_j\ \ (i<j) & 1\cdot2
\end{array}
\]
Therefore
\[J_{(2)}=(\alpha+1)\sum_{i}x_i^2+2\sum_{i<j}x_ix_j.\]
\item Let $\lambda=(2,1)$. There are four chains:
\[
\setlength{\arraycolsep}{3.5pt}
\begin{array}{l|l|l|c}
\text{chain} & \text{boxes added} & \text{monomials} & \text{weight}\\\hline
\emptyset\subset(1)\subset(2)\subset(2,1) & (1,1);\ (1,2);\ (2,1) & x_ix_jx_k\ \ (i<j<k) & 1\cdot2\cdot\frac{\alpha+2}{\alpha+1}\\[4pt]
\emptyset\subset(1)\subset(1,1)\subset(2,1) & (1,1);\ (2,1);\ (1,2) & x_ix_jx_k\ \ (i<j<k) & 1\cdot2\cdot\frac{2\alpha+1}{\alpha+1}\\[4pt]
\emptyset\subset(1)\subset(2,1) & (1,1);\ (1,2),(2,1) & x_ix_j^2\ \ (i<j) & 1\cdot(\alpha+2)\\[2pt]
\emptyset\subset(2)\subset(2,1) & (1,1),(1,2);\ (2,1) & x_i^2x_j\ \ (i<j) & (1+\alpha)\cdot\frac{\alpha+2}{\alpha+1}
\end{array}
\]
For instance, in the first chain the boxes $(1,1)$, $(1,2)$ and $(2,1)$ carry $x_i$, $x_j$ and $x_k$, respectively. Since $\frac{2(\alpha+2)}{\alpha+1}+\frac{2(2\alpha+1)}{\alpha+1}=6$, we obtain
\begin{align*}
J_{(2,1)}&=6\sum_{i<j<k}x_ix_jx_k+(\alpha+2)\sum_{i<j}x_ix_j^2+(\alpha+2)\sum_{i<j}x_i^2x_j\\
&=6\sum_{i<j<k}x_ix_jx_k+(\alpha+2)\sum_{i\ne j}x_i^2x_j.
\end{align*}
\end{enumerate}
\end{exam}

Finally, we recall the duality for Jack symmetric functions due to Stanley \cite[Theorem~3.3]{st89}. Let $\Lambda$ be the ring of symmetric functions over $\mathbb{Q}(\alpha)$. For $r\ge1$ let $p_r=\sum_i x_i^r$ be the $r$-th power sum symmetric function. Define an algebra automorphism
\[
\omega_\beta:\Lambda\to\Lambda
\]
by its action on power sums:
\[
\omega_\beta(p_r)=\beta p_r.
\]

\begin{theorem}[Stanley {\cite[Theorem~3.3]{st89}}]\label{thm:stanley33}
For every partition $\lambda$ we have
\[
(-1)^{|\lambda|}\omega_{-\frac{1}{\alpha}}(J_{\lambda'}(x;1/\alpha))=\alpha^{-|\lambda|}J_\lambda(x;\alpha).
\]
\end{theorem}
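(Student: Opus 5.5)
The plan is to characterize $J_\lambda(x;\alpha)$, up to a scalar, as an eigenfunction of a Laplace--Beltrami type operator with simple spectrum. I would then show that the map
\[
\theta:=(-1)^{\deg}\,\omega_{-\frac1\alpha}
\]
intertwines the operator at $1/\alpha$ with the one at $\alpha$, exchanging the eigenvalues attached to $\lambda'$ and $\lambda$. Finally I would fix the scalar by comparing a single coefficient. On power sums, $\theta(p_\nu)=(-1)^{|\nu|-\ell(\nu)}\alpha^{-\ell(\nu)}p_\nu$. A first sanity check is that $\theta$ is an isometry from $(\Lambda,\langle\cdot,\cdot\rangle_{1/\alpha})$ to $(\Lambda,\langle\cdot,\cdot\rangle_\alpha)$, since
\[
\langle\theta p_\nu,\theta p_\rho\rangle_\alpha=\delta_{\nu\rho}z_\nu\alpha^{-2\ell(\nu)}\alpha^{\ell(\nu)}=\langle p_\nu,p_\rho\rangle_{1/\alpha}.
\]
Hence the family $\theta J_{\lambda'}(x;1/\alpha)$ is orthogonal for $\langle\cdot,\cdot\rangle_\alpha$. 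Orthogonality alone does not pin down the basis, though, because triangularity with respect to $m$ is not obviously preserved by $\theta$. That is why I would use the operator instead.

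Concretely, I would take the operator $D(\alpha)=\frac{\alpha}{2}\sum_i x_i^2\partial_i^2+\sum_{i\ne j}\frac{x_i^2}{x_i-x_j}\partial_i$, or rather its stable version on $\Lambda$. Written in power sums, this is
\[
D(\alpha)=\frac{\alpha}{2}\sum_{r,s}rs\,p_{r+s}\partial_{p_r}\partial_{p_s}+\frac12\sum_{r,s}(r+s)p_rp_s\partial_{p_{r+s}}+(\text{a multiple of }\deg),
\]
with a correction term of the form $\frac{\alpha-1}{2}\sum_r r(r-1)p_r\partial_{p_r}$ that depends on the normalization. The step I expect to be the main computation is conjugating this by $\theta$. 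The substitution $p_r\mapsto c_r p_r$ with $c_r=(-1)^{r-1}\alpha^{-1}$ rescales the ``splitting'' term by $\alpha^{-1}$ and the ``joining'' term by $\alpha$, and the signs $(-1)^{r-1}$ are compatible because $(-1)^{r+s-1}=-(-1)^{r-1}(-1)^{s-1}$. The outcome should be $\theta D(1/\alpha)\theta^{-1}=-\alpha^{-1}D(\alpha)+(\text{terms in }\deg)$; this is where I would check constants most carefully.

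Stanley's theorem that $J_\lambda$ is the eigenfunction with eigenvalue $e_\lambda(\alpha)=\alpha n(\lambda')-n(\lambda)+(\text{const})\cdot|\lambda|$, where $n(\lambda)=\sum(i-1)\lambda_i$, combined with the triangularity, shows that the eigenspaces within degree $n$ are one-dimensional for generic $\alpha$. The eigenvalue of $\theta J_{\lambda'}(1/\alpha)$ is obtained from $e_{\lambda'}(1/\alpha)=\alpha^{-1}n(\lambda)-n(\lambda')+\cdots$ via the intertwining. After multiplying by $-\alpha$, this becomes $e_\lambda(\alpha)$ up to the degree terms. Therefore $\theta J_{\lambda'}(x;1/\alpha)=c\,J_\lambda(x;\alpha)$ for some scalar $c$, and the identity holds for all $\alpha$ by rationality in $\alpha$.

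To find $c$, I would compare coefficients of $p_1^n$. The coefficient of $p_1^n$ in $J_\lambda$ equals $1$ for every $\lambda$ and $\alpha$, which is equivalent to the normalization that $x_1\cdots x_n$ has coefficient $n!$: pair with $p_1^n$ using $\langle m_{1^n}$-duality$\rangle$, or extract it from the $\alpha$-independent part of Stanley's formula. Since $\theta(p_1^n)=(-1)^n(-1/\alpha)^np_1^n=\alpha^{-n}p_1^n$, we get $c=\alpha^{-n}$, which is the statement.
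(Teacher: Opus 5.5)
The paper gives no proof of this statement; it quotes it from Stanley and uses it as input. Your argument therefore has to stand on its own, and as written it has a gap.

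Several steps are correct: the isometry check, the conjugation $\theta D(1/\alpha)\theta^{-1}=-\alpha^{-1}D(\alpha)+(\text{terms in }\deg)$, the eigenvalue bookkeeping, and the normalization via the coefficient of $p_1^n$. (The clean reason for that coefficient being $1$ is that among the $p_\rho$ only $p_{(1^n)}$ contains $x_1\cdots x_n$, with coefficient $n!$.)

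The step that fails is the claim that the eigenspaces of $D(\alpha)$ in degree $n$ are one-dimensional for generic $\alpha$. The eigenvalue $e_\lambda(\alpha)$ depends on $\lambda$ only through $|\lambda|$, $n(\lambda)$ and $n(\lambda')$, and these do not separate partitions. For example, $\lambda=(3,3)$ and $\mu=(4,1,1)$ satisfy $n(\lambda)=n(\mu)=3$ and $n(\lambda')=n(\mu')=6$. Hence $e_{(3,3)}(\alpha)=e_{(4,1,1)}(\alpha)$ identically in $\alpha$, and this eigenspace in degree $6$ is the plane spanned by $J_{(3,3)}$ and $J_{(4,1,1)}$.

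All your argument yields about $\theta J_{(2,2,2)}(x;1/\alpha)$ and $\theta J_{(3,1,1,1)}(x;1/\alpha)$ is that they are two mutually orthogonal eigenvectors in this plane, which does not determine them. Triangularity only makes $e_\lambda(\alpha)$ a simple eigenvalue on $\mathrm{span}\{m_\kappa:\kappa\le\lambda\}$, because $\kappa<\lambda$ forces $n(\kappa)>n(\lambda)$ and $n(\kappa')<n(\lambda')$. To use it you must know that $\theta J_{\lambda'}(x;1/\alpha)$ lies in that span. That is exactly the triangularity you hoped the operator would let you avoid, so the operator does not get around it.

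That triangularity is easy to prove, and once you have it the operator is unnecessary.

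1. Let $g_\nu=\prod_i g_{\nu_i}$ with $g_k=\sum_{\rho\vdash k}z_\rho^{-1}\alpha^{\ell(\rho)}p_\rho$. By the Cauchy identity for $\prod_{i,j}(1-x_iy_j)^{-\alpha}$, this is the basis dual to $(m_\nu)$ under $\langle\cdot,\cdot\rangle_{1/\alpha}$.
2. Since $m_\nu$ lies in the span of the $J_\kappa(x;1/\alpha)$ with $\kappa\le\nu$, orthogonality gives $\langle J_{\lambda'}(x;1/\alpha),m_\nu\rangle_{1/\alpha}=0$ unless $\nu\ge\lambda'$. So $J_{\lambda'}(x;1/\alpha)$ lies in the span of the $g_\nu$ with $\nu\ge\lambda'$.
3. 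Your formula $\theta(p_\rho)=(-1)^{|\rho|-\ell(\rho)}\alpha^{-\ell(\rho)}p_\rho$ gives $\theta(g_k)=\sum_{\rho\vdash k}(-1)^{k-\ell(\rho)}z_\rho^{-1}p_\rho=e_k$. Hence $\theta(g_\nu)=e_\nu=m_{\nu'}+\sum_{\kappa<\nu'}(\cdots)m_\kappa$.
4. Since $\nu\ge\lambda'$ is equivalent to $\nu'\le\lambda$, it follows that $\theta J_{\lambda'}(x;1/\alpha)$ lies in the span of the $m_\kappa$ with $\kappa\le\lambda$.

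Combined with your isometry, the $\theta J_{\lambda'}(x;1/\alpha)$ form a $\langle\cdot,\cdot\rangle_\alpha$-orthogonal basis that is triangular with respect to dominance. Uniqueness of such a basis then gives $\theta J_{\lambda'}(x;1/\alpha)=c\,J_\lambda(x;\alpha)$; alternatively, your eigenvalue argument now works when restricted to that span. Your $p_1^n$ computation then gives $c=\alpha^{-n}$.
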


\section{Jack multiple zeta functions and the maps $\omega_\beta$}\label{sec:jackzeta}

In this section, we define the Jack multiple zeta functions and the maps $\omega_\beta$ on them. 

\begin{definition}[Jack multiple zeta function]\label{def:jackzeta}
Let $\mu\subset\lambda$ be partitions and let $\pmb s=(s_{ij})_{(i,j)\in\lambda/\mu}$ be variables attached to the boxes of $\lambda/\mu$. We define
\[\zeta^{J}_{\lambda/\mu}(\pmb s;\alpha)=\sum_{M}\frac{w(M)}{M^{\pmb s}},\qquad
M^{\pmb s}=\prod_{(i,j)\in\lambda/\mu}m_{ij}^{s_{ij}},\]
summed over all semi-standard Young tableaux $M=(m_{ij})$ of shape $\lambda/\mu$. Here $w(M)$ is the weight $w(T)$ of Theorem \ref{thm:stanley63}. When $\mu=\emptyset$, we write $\zeta^{J}_{\lambda}(\pmb s;\alpha)$ for $\zeta^{J}_{\lambda/\emptyset}(\pmb s;\alpha)$.
\end{definition}
\begin{exam}
\ytableausetup{boxsize=normal}
Let $\lambda=(2)$. By Example \ref{ex:intro21}(1), the chain $\emptyset\subset(1)\subset(2)$, in which the boxes $(1,1)$ and $(1,2)$ are added at different steps, has the weight $2$ and gives $\zeta(s_{11},s_{12})$, and the chain $\emptyset\subset(2)$ has the weight $1+\alpha$ and gives $\zeta(s_{11}+s_{12})$. Therefore,
\begin{align*}
\zeta^{J}_{(2)}\left(\ytableaushort{{s_{11}}{s_{12}}};\alpha\right)
&=2\zeta(s_{11},s_{12})+(1+\alpha)\zeta(s_{11}+s_{12})\\
&=(1-\alpha)\zeta_{(1,1)}\left(\ytableaushort{{s_{11}},{s_{12}}}\right)+(1+\alpha)\zeta_{(2)}\left(\ytableaushort{{s_{11}}{s_{12}}}\right).
\end{align*}
\end{exam}

Throughout this paper, we regard multiple zeta functions as formal symbols. To make this precise, let $S=\{s_1,s_2,\ldots\}$ be a set of independent variables and let
\[
\mathcal{N}=\left\{\left.\sum_{i\ge1}c_is_i\ \right|\ c_i\in\mathbb Z_{\ge0},\ \sum_i c_i\ge1,\ c_i=0 \text{ for all but finitely many } i\right\}
\]
be the set of nonzero $\mathbb Z_{\ge0}$-linear combinations of the variables, so that every index occurring in this paper is a tuple of elements of $\mathcal{N}$. We denote by
\[
\mathfrak Z=\bigoplus_{\ell\ge1}\ \bigoplus_{\boldsymbol{t}\in \mathcal{N}^\ell}\mathbb Q(\alpha)\,\zeta(\boldsymbol{t})
\]
the $\mathbb Q(\alpha)$-vector space freely spanned by the symbols $\zeta(\boldsymbol{t})$, one for each finite tuple $\boldsymbol{t}=(t_1,\ldots,t_\ell)$ of elements of $\mathcal{N}$. Any identity in $\mathfrak Z$ may then be specialized to actual multiple zeta functions by substituting complex numbers for the variables in the region of convergence. Grouping the tableaux $M$ in Definition \ref{def:jackzeta} according to the positions of their equal entries,
we write $\zeta^J_{\lambda/\mu}(\pmb s;\alpha)$ as a $\mathbb Q(\alpha)$-linear combination of the $\zeta(\boldsymbol{t})$, and we regard it as an element of $\mathfrak Z$.

We now introduce a zeta analog of the map $\omega_\beta$ on $\Lambda$ in Section \ref{sec:prelim}. For elements $s_1,\ldots,s_r$ of $\mathcal{N}$, we put $\boldsymbol{s} =(s_1,\ldots,s_r)$ and consider the tuples
\[
\boldsymbol{t}=(s_1 \square s_2 \square \cdots \square s_r).
\]
For each $\square$, we replace it either by a plus sign $+$ or by a comma $,$. For this replacement of $\boldsymbol{s}$, we define its type as follows. The commas divide $s_1,\ldots,s_r$ into blocks of consecutive entries. We define
\begin{equation}
    \label{type}
    {\rm type}(\boldsymbol{t}) = (t_1,\dots,t_\ell),
\end{equation}
where $t_i$ is the number of the $s_j$ in the $i$-th block. This is a composition of $r$.

For ${\rm type}(\boldsymbol{t})=(t_1,t_2,\ldots,t_\ell)$, we define
\[\binom{\beta}{{\rm type}(\boldsymbol{t})}=\prod_{i=1}^\ell\binom{\beta}{t_i}=\prod_{i=1}^\ell\frac{(\beta)_{t_i}}{t_i!}.\]

Now we define a $\mathbb Q(\alpha)$-linear map on the space $\mathfrak Z$,
\[
\omega_\beta:\mathfrak Z\longrightarrow\mathfrak Z
\]
by its action on the basis:
\[\omega_\beta(\zeta(s_1,s_2,\ldots,s_r))=\sum_{\boldsymbol{t}=(s_1 \square s_2 \square \cdots \square s_r)}\binom{\beta}{{\rm type}(\boldsymbol{t})}\zeta(\boldsymbol{t}),\]
where the sum runs over all replacements of $\boldsymbol{s}$. We note that $\omega_1=\rm id$. By Corollary \ref{cor:omega_inverse} below, $\omega_\beta$ is an automorphism of $\mathfrak Z$ for every $\beta\ne0$, with inverse $\omega_{1/\beta}$.

\begin{exam}
    \begin{align*}
        \omega_\beta(\zeta(s_1,s_2,s_3))&=\sum_{\boldsymbol{t}=(s_1 \square s_2 \square s_3)}\binom{\beta}{{\rm type}(\boldsymbol{t})}\zeta(\boldsymbol{t})\\
        &=\binom{\beta}{1}^{3}\zeta(s_1,s_2,s_3)+\binom{\beta}{2}\binom{\beta}{1}\zeta(s_1+s_2,s_3)\\
        &\qquad+\binom{\beta}{1}\binom{\beta}{2}\zeta(s_1,s_2+s_3)+\binom{\beta}{3}\zeta(s_1+s_2+s_3).
    \end{align*}
\end{exam}

\begin{theorem}\label{thm:omega_group}
For any $\beta,\gamma$, we have
\[\omega_\beta\circ\omega_{\gamma}=\omega_{\beta\gamma}.\]
\end{theorem}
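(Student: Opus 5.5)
It suffices to check the identity on a basis element $\zeta(s_1,\ldots,s_r)$ with $s_1,\ldots,s_r\in\mathcal N$. The key observation is that any index obtained by merging consecutive entries is again a tuple of elements of $\mathcal N$, so $\omega_\gamma$ stays inside $\mathfrak Z$.

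First compute the composite. Applying $\omega_\gamma$ gives a sum over replacements $\boldsymbol t$ of $(s_1\square\cdots\square s_r)$, with $\mathrm{type}(\boldsymbol t)=(t_1,\ldots,t_\ell)$. Applying $\omega_\beta$ to $\zeta(\boldsymbol t)$ then merges consecutive blocks of $\boldsymbol t$. The final index is $\zeta(\boldsymbol u)$, where $\boldsymbol u$ is a coarser replacement of $\boldsymbol s$.

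Fix a final replacement $\boldsymbol u$ of type $(u_1,\ldots,u_k)$. The intermediate $\boldsymbol t$ contributing to $\zeta(\boldsymbol u)$ are exactly the refinements of $\boldsymbol u$. Such a refinement amounts to choosing, independently inside each block of $\boldsymbol u$ of size $u_i$, a composition $(c_1,\ldots,c_m)$ of $u_i$. That block then receives the coefficient
\[\binom{\beta}{m}\prod_{j=1}^{m}\binom{\gamma}{c_j}.\]
Since the coefficients multiply over blocks, the coefficient of $\zeta(\boldsymbol u)$ in $\omega_\beta\omega_\gamma(\zeta(\boldsymbol s))$ factorizes as $\prod_i F(u_i)$, where
\[F(n)=\sum_{m\ge1}\binom{\beta}{m}\sum_{\substack{c_1+\cdots+c_m=n\\ c_j\ge1}}\prod_{j=1}^m\binom{\gamma}{c_j}.\]
One point needs checking: distinct pairs (intermediate $\boldsymbol t$, merging of $\boldsymbol t$) give distinct symbols unless they produce the same $\boldsymbol u$. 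This holds because $\boldsymbol u$, as a formal symbol built from the formal variables $s_j$, determines the set of comma positions. Since the $s_j$ are arbitrary elements of $\mathcal N$, I would rather argue at the level of replacement patterns and simply collect terms, which avoids any identification issue.

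The heart of the proof is the identity $F(n)=\binom{\beta\gamma}{n}$, which I would prove by generating functions. Put $y=(1+x)^\gamma-1=\sum_{c\ge1}\binom{\gamma}{c}x^c$. Then
\[\sum_{n\ge1}F(n)x^n=\sum_{m\ge1}\binom{\beta}{m}y^m=(1+y)^\beta-1=(1+x)^{\beta\gamma}-1,\]
which gives $F(n)=\binom{\beta\gamma}{n}$ for all $n\ge1$. This is an identity of formal power series, polynomial in $\beta$ and $\gamma$; one may verify it for positive integers and extend by polynomiality. Consequently the coefficient of $\zeta(\boldsymbol u)$ is $\prod_i\binom{\beta\gamma}{u_i}=\binom{\beta\gamma}{\mathrm{type}(\boldsymbol u)}$, and this is exactly the coefficient of $\zeta(\boldsymbol u)$ in $\omega_{\beta\gamma}(\zeta(\boldsymbol s))$.

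The main obstacle is the bookkeeping in the reindexing step: showing that the pairs (refinement, coarsening) correspond bijectively to a choice of final $\boldsymbol u$ together with an independent composition inside each of its blocks. Once that correspondence is in place, the binomial convolution identity above is routine.
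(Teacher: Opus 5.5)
Your proposal is correct and follows the paper's proof essentially verbatim. The coefficient of $\zeta(\boldsymbol u)$ factors over the blocks of $\boldsymbol u$ as $\prod_i S_{u_i}$, and $S_N=\binom{\beta\gamma}{N}$ follows from $1+\sum_{N\ge1}S_Nx^N=\bigl(1+((1+x)^\gamma-1)\bigr)^\beta=(1+x)^{\beta\gamma}$. Your identification worry is harmless: collecting terms over replacement patterns, as the paper implicitly does, is valid by linearity, and your polynomiality remark justifies $((1+x)^\gamma)^\beta=(1+x)^{\beta\gamma}$ for arbitrary $\beta,\gamma$, a step the paper takes for granted.
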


\begin{proof}
Let $\boldsymbol{s}=(s_1,\ldots,s_n)$. By definition, $\omega_\beta\left(\omega_\gamma(\zeta(\boldsymbol{s}))\right)$ is a linear combination of $\zeta(\boldsymbol{u})$, where $\boldsymbol{u}$ runs over all replacements $(s_1\square\cdots\square s_n)$ of $\boldsymbol{s}$. Fix such a $\boldsymbol{u}$ with ${\rm type}(\boldsymbol{u})=(u_1,\ldots,u_m)$.
An intermediate replacement $\boldsymbol{t}$ contributing to $\zeta(\boldsymbol{u})$ is obtained by choosing, independently for each block of $\boldsymbol{u}$, a decomposition of that block into consecutive sub-blocks.
Hence the coefficient of $\zeta(\boldsymbol{u})$ equals $\prod_{i=1}^{m}S_{u_i}$, where
\[
S_N
=
\sum_{r=1}^{N}
\frac{(\beta)_r}{r!}
\sum_{\substack{j_1+\cdots+j_r=N\\ j_i\ge1}}
\prod_{i=1}^{r}
\frac{(\gamma)_{j_i}}{j_i!}.
\]
It therefore suffices to show that
\[
S_N=\frac{(\beta\gamma)_N}{N!}=\binom{\beta\gamma}{N}\qquad(N\ge1).
\]
We now consider the generating function
\[
F(x):=\sum_{N\ge1}S_Nx^N.
\]
Let $A(x)=(1+x)^\gamma-1$. Then, we have
\[
A(x)=\sum_{k\ge1}\frac{(\gamma)_k}{k!}x^k\quad\text{and}\quad  A(x)^r=\sum_{N\ge1}\sum_{\substack{j_1+\cdots+j_r=N\\ j_i\ge1}}\left(
\prod_{i=1}^{r}
\frac{(\gamma)_{j_i}}{j_i!}
\right)x^N.
\]
Exchanging the order of summation, we have
\begin{align*}
    F(x)&=\sum_{r\ge1}\frac{(\beta)_r}{r!}A(x)^r=\sum_{r\ge1}
\frac{(\beta)_r}{r!}
\bigl((1+x)^\gamma-1\bigr)^r.
\intertext{Adding the term corresponding to $r=0$, we have}
1+F(x)
&=
\sum_{r\ge0}
\frac{(\beta)_r}{r!}
\bigl((1+x)^\gamma-1\bigr)^r=
\left(1+(1+x)^\gamma-1\right)^{\beta}=(1+x)^{\beta\gamma}.
\end{align*}
Comparing the coefficients of $x^N$ gives $S_N=\binom{\beta\gamma}{N}$ for $N\ge1$, and this proves the theorem.
\end{proof}

\begin{cor}\label{cor:omega_inverse}
We have $\omega_1={\rm id}$. Consequently, for any $\beta\ne0$ the map $\omega_\beta$ is invertible with
\[\omega_\beta^{-1}=\omega_{\frac1\beta},\qquad\text{that is,}\qquad
\omega_\beta\circ\omega_{\frac{1}{\beta}}=\omega_{\frac{1}{\beta}}\circ\omega_\beta={\rm id}.\]
\end{cor}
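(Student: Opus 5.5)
The corollary follows directly from Theorem \ref{thm:omega_group}, once we check that $\omega_1$ is the identity on the basis of $\mathfrak Z$.

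For $\omega_1={\rm id}$, fix a basis element $\zeta(s_1,\ldots,s_r)$ and expand $\omega_1(\zeta(s_1,\ldots,s_r))$ from the definition. Each replacement $\boldsymbol{t}$ carries the coefficient $\binom{1}{{\rm type}(\boldsymbol{t})}=\prod_i\binom{1}{t_i}$. Since $\binom{1}{t}=0$ for $t\ge2$ and $\binom11=1$, this coefficient vanishes unless every block has size one. The only surviving replacement is the one with all $\square$ replaced by commas, i.e. $\boldsymbol{t}=(s_1,\ldots,s_r)$, and its coefficient is $1$. So $\omega_1$ fixes every basis vector, and by $\mathbb Q(\alpha)$-linearity $\omega_1={\rm id}$.

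Now let $\beta\ne0$. Theorem \ref{thm:omega_group} with $\gamma=1/\beta$ gives $\omega_\beta\circ\omega_{1/\beta}=\omega_{1}={\rm id}$. Applying the same theorem with the roles of the two parameters exchanged gives $\omega_{1/\beta}\circ\omega_\beta=\omega_1={\rm id}$. Hence $\omega_{1/\beta}$ is a two-sided inverse of $\omega_\beta$, so $\omega_\beta$ is a bijective linear map (an automorphism of $\mathfrak Z$) with $\omega_\beta^{-1}=\omega_{1/\beta}$.

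The only point needing care is that Theorem \ref{thm:omega_group} is valid for arbitrary parameters, including $\beta$ and $1/\beta$ in $\mathbb Q(\alpha)$ such as $-1/\alpha$. This holds because its proof rests on the formal power series identity $\sum_{r\ge0}\binom{\beta}{r}\bigl((1+x)^\gamma-1\bigr)^r=(1+x)^{\beta\gamma}$. That identity holds for any $\beta,\gamma$ over a field of characteristic zero, since both sides have coefficients that are polynomial in $\beta,\gamma$ and agree for all nonnegative integers $\beta,\gamma$. With that in place, nothing here is a real obstacle.
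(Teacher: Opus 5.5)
Your proposal is correct and takes the route the paper intends: check $\omega_1=\mathrm{id}$ using $\binom{1}{t}=0$ for $t\ge2$, then apply Theorem \ref{thm:omega_group} with $\gamma=1/\beta$ in both orders. Your closing justification that the power-series identity $\sum_{r\ge0}\binom{\beta}{r}\bigl((1+x)^\gamma-1\bigr)^r=(1+x)^{\beta\gamma}$ holds for arbitrary parameters (by polynomiality in $\beta,\gamma$) is correct; the paper takes this for granted.
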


\begin{remark}
In the language of Hoffman and Ihara \cite[\S3]{hi17}, a formal power series $f(x)=\sum_{i\ge1}c_ix^i$ induces the linear map $\Psi_f$ which replaces $i$ consecutive entries by their sum with the coefficient $c_i$, and $\Psi_f\Psi_g=\Psi_{f\circ g}$ \cite[Theorem~3.1]{hi17}. Our $\omega_\beta$ is $\Psi_{f_\beta}$ with $f_\beta(x)=\sum_{i\ge1}\binom{\beta}{i}x^i=(1+x)^\beta-1$,
and since $f_\beta\circ f_\gamma=(1+x)^{\beta\gamma}-1=f_{\beta\gamma}$, Theorem \ref{thm:omega_group} can also be deduced from \cite[Theorem~3.1]{hi17}.
For $\beta=-1$, the map $(-1)^r\omega_{-1}$ on the $\zeta(s_1,\ldots,s_r)$ of depth $r$ is the map $S$ of Ihara, Kajikawa, Ohno and Okuda \cite[Definition~1]{ikoo}, which translates multiple zeta-star values into multiple zeta values.
\end{remark}

\section{The duality for an arbitrary shape}\label{sec:duality}

In this section we prove that the duality holds for every skew shape $\lambda/\mu$. For $\lambda=(r,1)$ the duality can also be verified directly by a telescoping identity; see Appendix \ref{app:r1}.

Throughout, for partitions $\mu\subset\lambda$ we write $\pmb s$ for the tableau of shape $\lambda/\mu$ whose box $(i,j)$ carries the variable $s_{ij}$, and $\pmb s'$ for the tableau of shape $\lambda'/\mu'$ whose box $(i,j)$ carries $s_{ji}$.

\subsection{A skew version of Stanley's duality}

In this subsection, we write Stanley's formula as a sum over chains of partitions and show a skew version of Theorem \ref{thm:stanley33}. Let $\mu\subset\nu$ be partitions. If $\nu/\mu$ is a horizontal strip, we put
\[
\psi_{\nu/\mu}(\alpha)=\frac{\displaystyle \prod_{s\in\nu}B_{\nu,\mu}(s)}{\displaystyle \prod_{s\in\mu}C_{\nu,\mu}(s)},
\]
and we put $\psi_{\nu/\mu}(\alpha)=0$ otherwise. By a chain of length $r$ from $\mu$ to $\nu$ we mean a sequence
\[T:\ \mu=\kappa^{(0)}\subset\kappa^{(1)}\subset\cdots\subset\kappa^{(r)}=\nu\]
of partitions in which every step $\kappa^{(i)}/\kappa^{(i-1)}$ is a nonempty horizontal strip, and we put
\[w(T;\alpha)=\prod_{i=1}^{r}\psi_{\kappa^{(i)}/\kappa^{(i-1)}}(\alpha).\]
Thus a chain has $r$ steps and $r+1$ shapes. For a chain $T$ as above we list the numbers of boxes added at the steps,
\[\boldsymbol{c}(T)=\left(|\kappa^{(1)}/\kappa^{(0)}|,\ \ldots,\ |\kappa^{(r)}/\kappa^{(r-1)}|\right),\]
and also the variables sitting in the boxes added at the steps,
\[\pmb t_T=\left(\sum_{(i,j)\in\kappa^{(1)}/\kappa^{(0)}}s_{ij},\ \ \ldots,\ \sum_{(i,j)\in\kappa^{(r)}/\kappa^{(r-1)}}s_{ij}\right).\]

For a sequence $\boldsymbol{c}=(c_1,\ldots,c_r)$ of positive integers let $M_{\boldsymbol{c}}(x)=\sum_{i_1<\cdots<i_r}x_{i_1}^{c_1}\cdots x_{i_r}^{c_r}$ be the monomial quasi-symmetric function \cite{ge84}, with $M_{\emptyset}=1$. Discarding the empty steps as in Example \ref{ex:intro21}, Theorem \ref{thm:stanley63} says
\begin{equation}\label{eqn:Phi}
\Phi_{\nu/\mu}(x;\alpha):=\sum_{T}w(T;\alpha)M_{\boldsymbol{c}(T)}(x)=\frac{J_{\nu/\mu}(x;\alpha)}{j_\mu},
\end{equation}
the sum being over all chains from $\mu$ to $\nu$; that is, the weight in Theorem \ref{thm:stanley63} is $j_\mu w(T;\alpha)$. Since $j_\emptyset=1$, we have $\Phi_{\nu/\emptyset}=J_\nu$.

\begin{exam}\label{ex:chains21}
Let $\lambda=(2,1)$. For the four chains of Example \ref{ex:intro21}(2) we have
\[
\begin{array}{lll}
T & \boldsymbol{c}(T) & \pmb t_T\\\hline
\emptyset\subset(1)\subset(2)\subset(2,1) & (1,1,1) & (s_{11},s_{12},s_{21})\\
\emptyset\subset(1)\subset(1,1)\subset(2,1) & (1,1,1) & (s_{11},s_{21},s_{12})\\
\emptyset\subset(1)\subset(2,1) & (1,2) & (s_{11},s_{12}+s_{21})\\
\emptyset\subset(2)\subset(2,1) & (2,1) & (s_{11}+s_{12},s_{21})
\end{array}
\]
The first two chains have the same $\boldsymbol{c}(T)$ but different $\pmb t_T$. Accordingly, they give the two different terms $\zeta(s_{11},s_{12},s_{21})$ and $\zeta(s_{11},s_{21},s_{12})$ of $\zeta^J_{(2,1)}(\pmb s;\alpha)$, whereas in \eqref{eqn:Phi} they merge:
\[\Phi_{(2,1)}(x;\alpha)=6M_{(1,1,1)}+(\alpha+2)\left(M_{(1,2)}+M_{(2,1)}\right).\]
\end{exam}

\begin{exam}\label{ex:skew21}
Let $\mu=(1)$ and $\nu=(2,1)$. The chains from $(1)$ to $(2,1)$ are
\[(1)\subset(2,1),\qquad (1)\subset(2)\subset(2,1),\qquad (1)\subset(1,1)\subset(2,1),\]
with $\boldsymbol{c}(T)=(2),\ (1,1),\ (1,1)$ and $w(T;\alpha)=\alpha+2,\ \frac{2(\alpha+2)}{\alpha+1},\ \frac{2(2\alpha+1)}{\alpha+1}$ by the table of factors in Example \ref{ex:intro21}.
The chain of length $1$ occurs because $\nu/\mu=\{(1,2),(2,1)\}$ is itself a horizontal strip, and the last two weights add up to $6$. Thus,
\[\Phi_{(2,1)/(1)}(x;\alpha)=(\alpha+2)M_{(2)}+6M_{(1,1)}=3p_1^2+(\alpha-1)p_2.\]
\end{exam}

\begin{lemma}\label{lem:jtrans}
For every partition $\mu$ we have $j_{\mu'}(1/\alpha)=\alpha^{-2|\mu|}j_\mu(\alpha)$.
\end{lemma}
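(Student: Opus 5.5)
The plan is to compare the two products box by box. Conjugation $(i,j)\mapsto(j,i)$ is a bijection between the boxes of $\mu$ and those of $\mu'$, and it swaps arm and leg:
\[a_{\mu'}(j,i)=\mu'_j-i=l_\mu(i,j),\qquad l_{\mu'}(j,i)=\mu_i-j=a_\mu(i,j).\]
So I will write $j_{\mu'}(1/\alpha)$ as a product over $s=(i,j)\in\mu$, indexed through the transposed box $(j,i)\in\mu'$, with $a=a_\mu(s)$ and $l=l_\mu(s)$.

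For the box of $\mu'$ corresponding to $s$, the two factors of $j_{\mu'}(1/\alpha)$ are
\[
\Bigl(a+\tfrac{1}{\alpha}(l+1)\Bigr)\Bigl(a+1+\tfrac{1}{\alpha}l\Bigr).
\]
Pulling out $\alpha^{-1}$ from each factor gives
\[
\alpha^{-2}\bigl(l+1+\alpha a\bigr)\bigl(l+\alpha(a+1)\bigr).
\]
These are exactly the two factors of $j_\mu(\alpha)$ at $s$, with their order interchanged. Taking the product over all $|\mu|$ boxes then yields $j_{\mu'}(1/\alpha)=\alpha^{-2|\mu|}j_\mu(\alpha)$.

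There is no real obstacle. The only point needing care is the bookkeeping under conjugation: one must check that the arm/leg swap sends the first factor of $j_{\mu'}$ to the second factor of $j_\mu$ and vice versa. The small example $\mu=(2)$ confirms this: $j_{(2)}(\alpha)=2\alpha(1+\alpha)$, $j_{(1,1)}(\beta)=2\beta(1+\beta)$, and at $\beta=1/\alpha$ the latter equals $\alpha^{-4}\cdot2\alpha(1+\alpha)$, as the lemma predicts.
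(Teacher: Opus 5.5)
Your proof is correct and is essentially the paper's argument: you transpose each box so that arm and leg swap, factor $\alpha^{-1}$ out of each of the two factors, and observe that they become the two factors of $j_\mu(\alpha)$ at the same box in the opposite order. One slip in your sanity check: the box $(1,2)$ of $(2)$ contributes $\alpha\cdot 1$, so $j_{(2)}(\alpha)=2\alpha^2(1+\alpha)$, not $2\alpha(1+\alpha)$; with the corrected value, $j_{(1,1)}(1/\alpha)=2\alpha^{-2}(1+\alpha)=\alpha^{-4}j_{(2)}(\alpha)$ as claimed, whereas the numbers you wrote down do not actually agree.
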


\begin{proof}
Writing $a(s)=a_\mu(s)$ and $l(s)=l_\mu(s)$, we have
\[j_\mu(\alpha)=\prod_{s\in\mu}\bigl(l(s)+\alpha(a(s)+1)\bigr)\bigl(l(s)+1+\alpha a(s)\bigr).\]
Conjugation interchanges $a$ and $l$, so the box $(j,i)$ of $\mu'$ contributes
\[\left(a(s)+\frac{l(s)+1}{\alpha}\right)\left(a(s)+1+\frac{l(s)}{\alpha}\right)
=\frac{1}{\alpha^2}\bigl(\alpha a(s)+l(s)+1\bigr)\bigl(\alpha a(s)+\alpha+l(s)\bigr)\]
to $j_{\mu'}(1/\alpha)$, which is $\alpha^{-2}$ times the contribution of $s$ to $j_\mu(\alpha)$.
\end{proof}

\begin{lemma}\label{lem:coprod}
For every partition $\nu$ and any two sets of variables $x$ and $y$ we have
\[J_\nu(x,y;\alpha)=\sum_{\mu\subset\nu}J_\mu(x;\alpha)\Phi_{\nu/\mu}(y;\alpha)
=\sum_{\mu\subset\nu}\frac{1}{j_\mu}J_\mu(x;\alpha)J_{\nu/\mu}(y;\alpha).\]
\end{lemma}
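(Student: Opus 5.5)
The plan is to work with Stanley's combinatorial formula (Theorem \ref{thm:stanley63}) in the chain form \eqref{eqn:Phi}. We split the variables of $J_\nu(x,y;\alpha)$ into the two ordered alphabets $x=(x_1,x_2,\ldots)$ and $y=(y_1,y_2,\ldots)$, with every $x$ placed before every $y$. Since $J_\nu$ is symmetric, it makes no difference to compute it in the concatenated alphabet $(x_1,x_2,\ldots,y_1,y_2,\ldots)$. A semistandard tableau in this alphabet corresponds to a chain of horizontal strips $\emptyset=\lambda^{(0)}\subset\lambda^{(1)}\subset\cdots$. We cut this chain at the moment the $x$-letters are used up, which produces an intermediate partition $\mu\subset\nu$.

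Under this cut, the tableau decomposes into two pieces. The first is a tableau of shape $\mu$ in the $x$-letters. The second is a chain from $\mu$ to $\nu$ in the $y$-letters: its nonempty steps form a chain $T$ in the sense of \eqref{eqn:Phi}, and the positions of the $y$-letters used give the monomial quasi-symmetric function $M_{\boldsymbol c(T)}(y)$. The key observation is that Stanley's weight $w$ (with $\mu=\emptyset$ in Theorem \ref{thm:stanley63}) is a product of the local factors $\psi_{\kappa^{(i)}/\kappa^{(i-1)}}$ over the steps. It therefore factors as (weight of the $x$-part) $\times$ $w(T;\alpha)$. Empty steps contribute the factor $1$, which is noted in Theorem \ref{thm:stanley63}, so discarding them is harmless.

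Summing over all tableaux, we first group by $\mu$. For fixed $\mu$, the $x$-part contributes $J_\mu(x;\alpha)$ by Theorem \ref{thm:stanley63}. The $y$-part contributes $\sum_T w(T;\alpha)M_{\boldsymbol c(T)}(y)=\Phi_{\nu/\mu}(y;\alpha)$. This gives the first equality:
\[
J_\nu(x,y;\alpha)=\sum_{\mu\subset\nu}J_\mu(x;\alpha)\,\Phi_{\nu/\mu}(y;\alpha).
\]
The second equality is then immediate from \eqref{eqn:Phi}, namely $\Phi_{\nu/\mu}=J_{\nu/\mu}/j_\mu$.

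The only delicate point is bookkeeping in the product formula. We must check that the weight of a chain starting at $\emptyset$ is exactly the product of the weight of its initial segment up to $\mu$ and the weight of its tail from $\mu$, with no extra normalizing factor. This holds because each factor of $w$ depends only on a single consecutive pair $(\lambda^{(i-1)},\lambda^{(i)})$. Likewise, the monomial $x^T y^T$ splits as the product of the two monomials, and every pair consisting of a tableau of shape $\mu$ in $x$ and a chain from $\mu$ to $\nu$ in $y$ arises from exactly one tableau. No symmetric-function theory beyond Theorem \ref{thm:stanley63} is needed. In particular, the argument avoids any appeal to the general coproduct of Jack functions, and the skew function $J_{\nu/\mu}$ enters only through its definition via \eqref{eqn:Phi}.
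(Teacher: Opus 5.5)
Your proposal is correct and follows essentially the same route as the paper: split each tableau of shape $\nu$ along the partition $\mu$ formed by the boxes with $x$-entries, use that Stanley's weight is a product of the step factors $\psi$ (since $j_\emptyset=1$), and sum over the two pieces and then over $\mu$. The only difference is that the paper first reduces to finite alphabets $x=(x_1,\ldots,x_p)$, $y=(x_{p+1},\ldots,x_{p+q})$, using that both sides are compatible with setting variables to $0$. This turns your informal step of computing $J_\nu$ in the concatenated alphabet $(x_1,x_2,\ldots,y_1,y_2,\ldots)$ into a literal instance of Theorem~\ref{thm:stanley63} with entries in $\{1,\ldots,p+q\}$.
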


\begin{proof}
Since both sides are compatible with setting variables equal to $0$, it suffices to prove the identity when $x=(x_1,\ldots,x_p)$ and $y=(y_1,\ldots,y_q)$ are finite; we write $y_k=x_{p+k}$. By Theorem \ref{thm:stanley63}, $J_\nu(x,y;\alpha)$ is the sum of $w(T)x^T$ over the semi-standard Young  tableaux $T$ of shape $\nu$ with entries in $\{1,\ldots,p+q\}$.
The boxes of $T$ with entries at most $p$ form a partition $\mu\subset\nu$, and $T$ splits uniquely into a tableau $T_1$ of shape $\mu$ with entries in $\{1,\ldots,p\}$ and a tableau $T_2$ of shape $\nu/\mu$ with entries in $\{p+1,\ldots,p+q\}$.
Since $w(T)$ is a product over the steps of $T$ and $j_\emptyset=1$, we have $w(T)=w(T_1)\cdot w(T_2)/j_\mu$, where $w(T_2)$ is the weight of the skew tableau $T_2$ in Theorem \ref{thm:stanley63}. Summing over $T_2$ gives $J_{\nu/\mu}(y;\alpha)/j_\mu=\Phi_{\nu/\mu}(y;\alpha)$ by Theorem \ref{thm:stanley63} and \eqref{eqn:Phi},
and summing over $T_1$ gives $J_\mu(x;\alpha)$. Summing over $\mu$ gives both equalities.
\end{proof}

The following proposition is a skew version of Theorem \ref{thm:stanley33}. It is the Jack limit of the duality for skew Macdonald polynomials \cite[VI (7.16)]{mac}; we include a proof in the normalization of \eqref{eqn:Phi}.

\begin{prop}[{cf. \cite[VI (7.16)]{mac}}]\label{prop:skewdual}
Let $\mu\subset\nu$ be partitions and put $d=|\nu|-|\mu|$. Then
\[(-1)^{d}\omega_{-\frac{1}{\alpha}}\left(\Phi_{\nu'/\mu'}(x;1/\alpha)\right)
=\alpha^{-d}\Phi_{\nu/\mu}(x;\alpha),\]
or equivalently
\[(-1)^{d}\omega_{-\frac{1}{\alpha}}\left(J_{\nu'/\mu'}(x;1/\alpha)\right)
=\alpha^{-|\nu|-|\mu|}J_{\nu/\mu}(x;\alpha).\]
\end{prop}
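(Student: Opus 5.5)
The plan is to derive the skew duality from the non-skew Theorem \ref{thm:stanley33}, using the coproduct expansion of Lemma \ref{lem:coprod}. I take $x,y$ to be two independent sets of variables. The operator $\omega_\beta$ acts on $\Lambda(x,y)=\Lambda(x)\otimes\Lambda(y)$ as $\omega_\beta\otimes\omega_\beta$. This holds because $p_r(x,y)=p_r(x)+p_r(y)$, and $\omega_\beta$ is the algebra map multiplying each $p_r$ by $\beta$, so it is compatible with this splitting.

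First apply Theorem \ref{thm:stanley33} to $J_{\nu'}(x,y;1/\alpha)$, written as $\alpha^{-|\nu|}(-1)^{|\nu|}J_\nu(x,y;\alpha)$. Expand both sides by Lemma \ref{lem:coprod}.

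The left side is
\[(-1)^{|\nu|}\omega_{-1/\alpha}J_{\nu'}(x,y;1/\alpha)=\sum_{\mu'\subset\nu'}(-1)^{|\nu|}\,\omega_{-1/\alpha}\bigl(J_{\mu'}(x;1/\alpha)\bigr)\,\omega_{-1/\alpha}\bigl(\Phi_{\nu'/\mu'}(y;1/\alpha)\bigr).\]
Since conjugation is a bijection on subpartitions, we may index by $\mu\subset\nu$. Applying Theorem \ref{thm:stanley33} to $\mu$ replaces $\omega_{-1/\alpha}J_{\mu'}(x;1/\alpha)$ by $(-1)^{|\mu|}\alpha^{-|\mu|}J_\mu(x;\alpha)$.

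The right side is
\[\alpha^{-|\nu|}\sum_{\mu\subset\nu}J_\mu(x;\alpha)\Phi_{\nu/\mu}(y;\alpha).\]

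The functions $J_\mu(x;\alpha)$, for $\mu$ running over partitions, are linearly independent over $\Lambda(y)\otimes\mathbb{Q}(\alpha)$, since they form a basis of $\Lambda(x)$. We may therefore compare coefficients of $J_\mu(x;\alpha)$. This gives
\[(-1)^{|\nu|-|\mu|}\alpha^{-|\mu|}\,\omega_{-1/\alpha}\Phi_{\nu'/\mu'}(y;1/\alpha)=\alpha^{-|\nu|}\Phi_{\nu/\mu}(y;\alpha),\]
which is exactly the first identity with $d=|\nu|-|\mu|$. One point needs care: the shift $(-1)^{|\mu|}$ from $x$ combines with $(-1)^{|\nu|}$ into $(-1)^{d}$, which is correct since signs only depend on parity.

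For the equivalent form, substitute $\Phi_{\nu/\mu}=J_{\nu/\mu}/j_\mu$ from \eqref{eqn:Phi} on both sides. Then use Lemma \ref{lem:jtrans}, which gives $j_{\mu'}(1/\alpha)=\alpha^{-2|\mu|}j_\mu(\alpha)$. Multiplying through by $j_{\mu'}(1/\alpha)$ turns the factor $\alpha^{-d}$ into $\alpha^{-d-2|\mu|}=\alpha^{-|\nu|-|\mu|}$, as claimed.

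The main obstacle is justifying that $\omega_\beta$ splits as $\omega_\beta\otimes\omega_\beta$ on the two sets of variables, and that coefficient comparison is legitimate. Both are standard once one works in $\Lambda(x)\otimes\Lambda(y)$ over $\mathbb{Q}(\alpha)$ with the basis $\{J_\mu(x;\alpha)\}$ of the first factor. Everything else is sign and power bookkeeping.
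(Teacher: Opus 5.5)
Your proposal is correct and is essentially the paper's proof: apply $(-1)^{|\nu|}\omega_{-\frac1\alpha}$ to Lemma \ref{lem:coprod} for $\nu'$ at parameter $1/\alpha$, use Theorem \ref{thm:stanley33} both on the full alphabet $x\cup y$ and on each $J_{\mu'}(x;1/\alpha)$, and compare coefficients of the linearly independent $J_\mu(x;\alpha)$. The only difference is that you start from the $\Phi$-form of Lemma \ref{lem:coprod}, so you need Lemma \ref{lem:jtrans} only once, to pass to the $J$-form; the paper derives the $J$-form first and then converts back to $\Phi$. One wording slip: it is $\omega_{-\frac1\alpha}J_{\nu'}(x,y;1/\alpha)$, not $J_{\nu'}(x,y;1/\alpha)$ itself, that equals $(-1)^{|\nu|}\alpha^{-|\nu|}J_\nu(x,y;\alpha)$.
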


\begin{proof}
Since $p_r(x,y)=p_r(x)+p_r(y)$ and $\omega_{-\frac{1}{\alpha}}$ is an algebra automorphism, applying $\omega_{-\frac{1}{\alpha}}$ to the set of variables $x\cup y$ is the same as applying it to $x$ and to $y$ separately. We apply $(-1)^{|\nu|}\omega_{-\frac{1}{\alpha}}$ to Lemma \ref{lem:coprod} for $\nu'$ at the parameter $1/\alpha$, that is, to
\[J_{\nu'}(x,y;1/\alpha)=\sum_{\tau\subset\nu'}\frac{1}{j_\tau(1/\alpha)}J_\tau(x;1/\alpha)J_{\nu'/\tau}(y;1/\alpha).\]
By Theorem \ref{thm:stanley33} the left-hand side becomes $\alpha^{-|\nu|}J_\nu(x,y;\alpha)$, which by Lemma \ref{lem:coprod} equals
\[\alpha^{-|\nu|}\sum_{\mu\subset\nu}\frac{1}{j_\mu(\alpha)}J_\mu(x;\alpha)J_{\nu/\mu}(y;\alpha).\]
On the right-hand side we write $\tau=\mu'$ and $(-1)^{|\nu|}=(-1)^{|\mu|}(-1)^{d}$. By using Theorem \ref{thm:stanley33} on each $J_{\mu'}(x;1/\alpha)$, it becomes
\[\sum_{\mu\subset\nu}\frac{\alpha^{-|\mu|}}{j_{\mu'}(1/\alpha)}J_\mu(x;\alpha)\cdot
(-1)^{d}\omega_{-\frac{1}{\alpha}}\left(J_{\nu'/\mu'}(y;1/\alpha)\right).\]
The Jack symmetric functions $J_\mu(x;\alpha)$, $\mu\subset\nu$, are linearly independent, so comparing their coefficients gives
\[(-1)^{d}\omega_{-\frac{1}{\alpha}}\left(J_{\nu'/\mu'}(y;1/\alpha)\right)
=\alpha^{|\mu|-|\nu|}\frac{j_{\mu'}(1/\alpha)}{j_\mu(\alpha)}J_{\nu/\mu}(y;\alpha)
=\alpha^{-|\nu|-|\mu|}J_{\nu/\mu}(y;\alpha)\]
by Lemma \ref{lem:jtrans}. Dividing both sides by $j_{\mu'}(1/\alpha)$ and using Lemma \ref{lem:jtrans} once more turns this into the assertion for $\Phi$.
\end{proof}

\subsection{The duality for Jack multiple zeta functions}
In this subsection, we compare the coefficients of both sides of the duality and reduce it to an identity for each skew shape.
\begin{definition}
For partitions $\mu\subset\nu$ and $r\ge0$ we set
\[A_r(\nu/\mu;\alpha)=\sum_{T}w(T;\alpha),\]
the sum being over the chains from $\mu$ to $\nu$ of length exactly $r$, so that $A_0(\nu/\mu;\alpha)=\delta_{\mu\nu}$ and $A_1(\nu/\mu;\alpha)=\psi_{\nu/\mu}(\alpha)$. We further set
\[\Sigma_{\nu/\mu}(z;\alpha)=\sum_{r\ge0}\binom{z}{r}A_r(\nu/\mu;\alpha)\in\mathbb{Q}(\alpha)[z].\]
\end{definition}

For an integer $q\ge0$, a chain of length $r$ together with a choice of $r$ of the $q$ variables corresponds to a monomial of $\Phi_{\nu/\mu}(x_1,\ldots,x_q;\alpha)$ whose support has $r$ elements. Hence
\begin{equation}\label{eqn:spec}
\Sigma_{\nu/\mu}(q;\alpha)=\Phi_{\nu/\mu}(1^q;\alpha)\qquad(q\in\mathbb{Z}_{\ge0}),
\end{equation}
so that the left-hand side is a polynomial in $q$ interpolating the right-hand side. We write $f(1^z)$ for the polynomial interpolating $f(1^q)$ in this way. Taking $q=1$ in \eqref{eqn:spec}, or $z=1$ in the definition of $\Sigma$, gives
\begin{equation}\label{eqn:sigma1}
\Sigma_{\nu/\mu}(1;\alpha)=A_1(\nu/\mu;\alpha)\qquad(\mu\ne\nu),
\end{equation}
since $\binom1r=0$ for $r\ge2$ and $A_0(\nu/\mu;\alpha)=0$.

\begin{exam}\label{ex:sigma21}
For $\mu=(1)$ and $\nu=(2,1)$, Example \ref{ex:skew21} gives $A_0=0$, $A_1=\alpha+2$ and $A_2=6$, so that
\[\Sigma_{(2,1)/(1)}(z;\alpha)=\binom z1(\alpha+2)+\binom z2\cdot6=z(\alpha+3z-1).\]
In accordance with \eqref{eqn:spec}, $M_{(2)}(1^q)=q$ and $M_{(1,1)}(1^q)=\binom q2$ give $\Phi_{(2,1)/(1)}(1^q;\alpha)=(\alpha+2)q+6\binom q2$, and in accordance with \eqref{eqn:sigma1}, $\Sigma_{(2,1)/(1)}(1;\alpha)=\alpha+2=A_1$.
\end{exam}

\begin{lemma}\label{lem:omspec}
For every $f\in\Lambda$ we have $\left(\omega_\beta f\right)(1^z)=f(1^{\beta z})$.
\end{lemma}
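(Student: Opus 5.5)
Both sides of the identity are $\mathbb{Q}(\alpha)$-linear in $f$ and multiplicative in $f$: $\omega_\beta$ is an algebra endomorphism, and specialization $f\mapsto f(1^q)$ is an algebra homomorphism for each integer $q\ge0$, hence so is its polynomial interpolation in $z$. It therefore suffices to check the identity on an algebraic set of generators of $\Lambda$. The power sums $p_r$ ($r\ge1$) generate $\Lambda$ over $\mathbb{Q}(\alpha)$, so we reduce to $f=p_r$.

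First we make the interpolation $f(1^z)$ precise. For $f\in\Lambda$ homogeneous of degree $n$, write $f=\sum_{\boldsymbol c}a_{\boldsymbol c}M_{\boldsymbol c}$ in monomial quasi-symmetric functions. Since $M_{\boldsymbol c}(1^q)=\binom{q}{\ell(\boldsymbol c)}$, the value $f(1^q)$ is a polynomial in $q$. A polynomial in $z$ is determined by its values at $z\in\mathbb{Z}_{\ge0}$, so $f\mapsto f(1^z)$ is a well-defined ring homomorphism $\Lambda\to\mathbb{Q}(\alpha)[z]$: multiplicativity holds at every integer $q$, hence identically.

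With this in hand, the generator check is immediate. We have $p_r(1^q)=q$, so $p_r(1^z)=z$. Then $(\omega_\beta p_r)(1^z)=\beta p_r(1^z)=\beta z$, while $p_r(1^{\beta z})$ means the polynomial $z\mapsto p_r(1^z)$ evaluated at $\beta z$, which is also $\beta z$. The two sides agree on generators.

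To conclude, consider the two maps $f\mapsto(\omega_\beta f)(1^z)$ and $f\mapsto f(1^{\beta z})$. The second is the specialization homomorphism composed with the substitution $z\mapsto\beta z$ on $\mathbb{Q}(\alpha)[z]$, and the first is the specialization homomorphism composed with $\omega_\beta$. Both are therefore $\mathbb{Q}(\alpha)$-algebra homomorphisms $\Lambda\to\mathbb{Q}(\alpha)[z]$ (or with $\beta$ adjoined). Since they agree on all $p_r$, they agree on all of $\Lambda$.

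The only point requiring care is the meaning of $f(1^{\beta z})$ when $\beta z$ is not an integer. We interpret it, as the paper does, as the interpolating polynomial evaluated at $\beta z$, so that the identity is one of polynomials in $z$. No real obstacle is expected beyond this bookkeeping.
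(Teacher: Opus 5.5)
Your proof is correct and is essentially the paper's argument. The paper uses linearity and checks the identity on the basis $p_\rho$, where $\omega_\beta(p_\rho)=\beta^{\ell(\rho)}p_\rho$ and $p_\rho(1^z)=z^{\ell(\rho)}$; you use multiplicativity and check on the generators $p_r$, which is the same computation, and your justification that $f\mapsto f(1^z)$ is a well-defined ring homomorphism $\Lambda\to\mathbb{Q}(\alpha)[z]$ makes explicit a point the paper leaves implicit.
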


\begin{proof}
Both sides are linear in $f$, so we may take $f=p_\rho$ for a partition $\rho$. Then $\omega_\beta(p_\rho)=\beta^{\ell(\rho)}p_\rho$, while $p_r(1^q)=q$ for every $r\ge1$ and hence $p_\rho(1^z)=z^{\ell(\rho)}$.
Therefore $\left(\omega_\beta p_\rho\right)(1^z)=\beta^{\ell(\rho)}z^{\ell(\rho)}=p_\rho(1^{\beta z})$.
\end{proof}

\begin{lemma}\label{lem:local}
Let $\mu\subsetneq\nu$ be partitions and put $d=|\nu|-|\mu|$. Then
\[\Sigma_{\nu'/\mu'}\left(-\frac{1}{\alpha};\frac{1}{\alpha}\right)
=(-\alpha)^{-d}A_1(\nu/\mu;\alpha).\]
\end{lemma}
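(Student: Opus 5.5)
We combine the specialization identity \eqref{eqn:spec}, Lemma \ref{lem:omspec} and Proposition \ref{prop:skewdual}, and then put the specialization parameter equal to $1$ via \eqref{eqn:sigma1}.

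First, apply Proposition \ref{prop:skewdual} in its $\Phi$ form and specialize both sides at $x=1^z$. By \eqref{eqn:spec} the right-hand side becomes $\alpha^{-d}\Phi_{\nu/\mu}(1^z;\alpha)=\alpha^{-d}\Sigma_{\nu/\mu}(z;\alpha)$. On the left-hand side, Lemma \ref{lem:omspec} with $\beta=-1/\alpha$ gives $(\omega_{-\frac1\alpha}f)(1^z)=f(1^{-z/\alpha})$ for $f=\Phi_{\nu'/\mu'}(x;1/\alpha)\in\Lambda$. Again by \eqref{eqn:spec}, read as an identity of polynomials in $z$, this equals $\Sigma_{\nu'/\mu'}(-z/\alpha;1/\alpha)$. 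Hence
\[
(-1)^d\,\Sigma_{\nu'/\mu'}\left(-\frac{z}{\alpha};\frac1\alpha\right)=\alpha^{-d}\,\Sigma_{\nu/\mu}(z;\alpha)
\]
as polynomials in $z$.

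Second, set $z=1$. Since $\mu\ne\nu$, \eqref{eqn:sigma1} gives $\Sigma_{\nu/\mu}(1;\alpha)=A_1(\nu/\mu;\alpha)$. Multiplying through by $(-1)^d$ yields $\Sigma_{\nu'/\mu'}(-1/\alpha;1/\alpha)=(-1)^d\alpha^{-d}A_1(\nu/\mu;\alpha)=(-\alpha)^{-d}A_1(\nu/\mu;\alpha)$, which is the claim.

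The only delicate point is justifying the passage from integer specializations $1^q$ to the polynomial variable $z$. The meaning of $f(1^z)$ is the polynomial interpolating $f(1^q)$. For $f=\Phi_{\nu/\mu}$, \eqref{eqn:spec} identifies this interpolant with $\Sigma_{\nu/\mu}(z)$; for each $p_\rho$ it is $z^{\ell(\rho)}$. Both sides of the specialized identity are therefore polynomials in $z$, and substituting $\beta z$ into the polynomial $\Sigma_{\nu'/\mu'}$ is legitimate because $f(1^{\beta z})$ is, by linearity in $f$, exactly the interpolating polynomial evaluated at $\beta z$. Once this is stated, the argument is a direct computation.
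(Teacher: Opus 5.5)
Your proposal is correct and follows essentially the same route as the paper: specialize the $\Phi$-form of Proposition \ref{prop:skewdual} at $x=(1^z)$, use Lemma \ref{lem:omspec} with $\beta=-\frac1\alpha$ and \eqref{eqn:spec} to get the polynomial identity in $z$, and put $z=1$ via \eqref{eqn:sigma1}. Your extra remark on why $f(1^{\beta z})$ is the interpolating polynomial evaluated at $\beta z$ just makes explicit what the paper leaves implicit.
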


\begin{proof}
Specialize $x=(1^z)$ in Proposition \ref{prop:skewdual}. By Lemma \ref{lem:omspec} with $\beta=-\frac1\alpha$ and by \eqref{eqn:spec}, the left-hand side becomes $(-1)^{d}\Sigma_{\nu'/\mu'}\left(-\frac{z}{\alpha};\frac{1}{\alpha}\right)$, so that
\[(-1)^{d}\Sigma_{\nu'/\mu'}\left(-\frac{z}{\alpha};\frac{1}{\alpha}\right)
=\alpha^{-d}\Sigma_{\nu/\mu}(z;\alpha).\]
Both sides are polynomials in $z$. Putting $z=1$ and using \eqref{eqn:sigma1}, which applies since $\mu\ne\nu$, gives the assertion.
\end{proof}

\begin{exam}\label{ex:local}
Let $\mu=(1)$ and $\nu=(2,1)$, so that $\mu'=(1)$, $\nu'=(2,1)$ and $d=2$. Substituting $z=-\frac1\alpha$ and the parameter $\frac1\alpha$ in Example \ref{ex:sigma21},
\[\Sigma_{(2,1)/(1)}\left(-\frac1\alpha;\frac1\alpha\right)
=-\frac1\alpha\left(\frac1\alpha-\frac3\alpha-1\right)=\frac{\alpha+2}{\alpha^2}
=(-\alpha)^{-2}A_1\bigl((2,1)/(1);\alpha\bigr).\]
\end{exam}

Let $\mu\subset\lambda$ be partitions and let $d=|\lambda|-|\mu|$. For a sequence of partitions
\begin{equation}
\label{eq:ordered_set_partition}
\pmb u:\ \mu=\lambda^{(0)}\subsetneq\lambda^{(1)}\subsetneq\cdots\subsetneq\lambda^{(m)}=\lambda,
\end{equation}
where $\lambda^{(k)}/\lambda^{(k-1)}$ is not necessarily a horizontal strip, we write
\[\zeta(\pmb u)=\zeta\left(\sum_{(i,j)\in\lambda^{(1)}/\lambda^{(0)}}s_{ij},\ \ldots,\ \sum_{(i,j)\in\lambda^{(m)}/\lambda^{(m-1)}}s_{ij}\right).\]
The $\zeta(\pmb u)$ form a basis of the subspace $\mathfrak Z_{\lambda/\mu}$ of $\mathfrak Z$ which they span, and $\omega_\beta$ preserves $\mathfrak Z_{\lambda/\mu}$, since merging consecutive entries of $\zeta(\pmb u)$ corresponds to removing some of the $\lambda^{(k)}$ with $0<k<m$ from $\pmb u$. For $f\in\mathfrak Z_{\lambda/\mu}$ we write $[\zeta(\pmb u)]f$ for the coefficient of $\zeta(\pmb u)$ in the expansion of $f$ in this basis.
Both sides of the duality lie in $\mathfrak Z_{\lambda/\mu}$.

\begin{exam}\label{ex:osp21}
Let $\lambda=(2,1)$ and $\mu=\emptyset$.
\begin{enumerate}
\item For $\pmb u:\emptyset\subsetneq(1)\subsetneq(2,1)$ we have
$\zeta(\pmb u)=\zeta(s_{11},s_{12}+s_{21})$. This is $\zeta(\pmb t_T)$ for the third chain $T$ of Example \ref{ex:chains21}.
\item For $\pmb u:\emptyset\subsetneq(1,1)\subsetneq(2,1)$ we have
$\zeta(\pmb u)=\zeta(s_{11}+s_{21},s_{12})$, but $(1,1)/\emptyset$ is not a horizontal strip.
\end{enumerate}
Only the four chains $\pmb u$ in Example \ref{ex:chains21} have a nonzero coefficient in $\zeta^J_{(2,1)}(\pmb s;\alpha)$.
\end{exam}

\begin{lemma}\label{lem:factor}
Let $\pmb u$ be a sequence of partitions as in \eqref{eq:ordered_set_partition}. Then
\begin{align}
[\zeta(\pmb u)]\,\zeta^J_{\lambda/\mu}(\pmb s;\alpha)
&=j_\mu(\alpha)\prod_{k=1}^{m}A_1\!\left(\lambda^{(k)}/\lambda^{(k-1)};\alpha\right),
\label{eqn:rhscoeff}\\
[\zeta(\pmb u)]\,\omega_{-\frac{1}{\alpha}}\!\left(\zeta^J_{\lambda'/\mu'}\left(\pmb s';\frac1\alpha\right)\right)
&=j_{\mu'}\!\left(\frac1\alpha\right)\prod_{k=1}^{m}\Sigma_{\lambda^{(k)\prime}/\lambda^{(k-1)\prime}}\!\left(-\frac1\alpha;\frac1\alpha\right).
\label{eqn:lhscoeff}
\end{align}
\end{lemma}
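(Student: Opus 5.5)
First I make the right-hand side \eqref{eqn:rhscoeff} explicit. By Definition \ref{def:jackzeta} and Theorem \ref{thm:stanley63}, grouping tableaux by the positions of equal entries, we have
\[\zeta^J_{\lambda/\mu}(\pmb s;\alpha)=j_\mu(\alpha)\sum_T w(T;\alpha)\zeta(\pmb t_T),\]
where $T$ runs over the chains from $\mu$ to $\lambda$. Indeed, a tableau $M$ whose distinct entries are $m_1<\cdots<m_r$ determines a chain $T$, and summing $M^{-\pmb s}$ over all choices of $m_1<\cdots<m_r$ gives $\zeta(\pmb t_T)$. Distinct chains give distinct sequences of shapes. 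Moreover, $\zeta(\pmb t_T)=\zeta(\pmb u)$ exactly when $\pmb u$ is the sequence of shapes of $T$, since the basis element $\zeta(\pmb u)$ is indexed by $\pmb u$.

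Hence the coefficient of $\zeta(\pmb u)$ equals $j_\mu\prod_k\psi_{\lambda^{(k)}/\lambda^{(k-1)}}$ when every step is a horizontal strip, and $0$ otherwise. This is $j_\mu\prod_k A_1(\lambda^{(k)}/\lambda^{(k-1)};\alpha)$, because $A_1=\psi$ vanishes on non-strips.

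For \eqref{eqn:lhscoeff}, I apply the same expansion to $\zeta^J_{\lambda'/\mu'}(\pmb s';1/\alpha)$ over chains $T'$ from $\mu'$ to $\lambda'$. Transposition identifies a sequence of shapes $\mu'\subsetneq\cdots\subsetneq\lambda'$ with a sequence $\mu\subsetneq\cdots\subsetneq\lambda$. Since box $(i,j)$ of $\lambda'$ carries $s_{ji}$, the symbol attached to $T'$ is $\zeta(\pmb v)$, where $\pmb v$ is the conjugate sequence of partitions in $\lambda/\mu$.

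Applying $\omega_{-1/\alpha}$, the basis element $\zeta(\pmb v)$ contributes to $\zeta(\pmb u)$ exactly when $\pmb u$ is obtained from $\pmb v$ by deleting intermediate shapes, that is, when $\pmb v$ refines $\pmb u$. The coefficient is $\prod_k\binom{-1/\alpha}{r_k}$, where $r_k$ is the number of steps of $\pmb v$ between $\lambda^{(k-1)}$ and $\lambda^{(k)}$.

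Therefore the coefficient of $\zeta(\pmb u)$ is
\[j_{\mu'}(1/\alpha)\sum_{T'}w(T';1/\alpha)\prod_k\binom{-1/\alpha}{r_k},\]
summed over chains $T'$ whose conjugate refines $\pmb u$. Such a $T'$ is the same as an independent choice, for each $k$, of a chain of some length $r_k\ge1$ from $\lambda^{(k-1)\prime}$ to $\lambda^{(k)\prime}$. The weight $w(T';1/\alpha)$ factors as the product of the weights of these subchains, since $w$ is a product over steps. The sum therefore factors as
\[\prod_k\sum_{r\ge1}\binom{-1/\alpha}{r}A_r\bigl(\lambda^{(k)\prime}/\lambda^{(k-1)\prime};1/\alpha\bigr).\]
Because $\lambda^{(k-1)}\ne\lambda^{(k)}$, we have $A_0=0$, so each inner sum is $\Sigma_{\lambda^{(k)\prime}/\lambda^{(k-1)\prime}}(-1/\alpha;1/\alpha)$. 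This proves \eqref{eqn:lhscoeff}.

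The main point needing care is the bookkeeping in this last step. The merging in $\omega$ must match exactly the deletion of intermediate shapes, and the bijection between refining chains and tuples of subchains must be checked. Both follow from the definition of $\omega_\beta$ and from the remark that $\omega_\beta$ preserves $\mathfrak Z_{\lambda/\mu}$.
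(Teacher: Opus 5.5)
Your proposal is correct and follows essentially the same route as the paper. For \eqref{eqn:rhscoeff} you group the tableaux by their chains. For \eqref{eqn:lhscoeff} you expand over transposed chains, identify merging under $\omega_{-1/\alpha}$ with deleting intermediate shapes of $\pmb u$, and factor the sum over refining chains into a product of independent subchain sums over the steps of $\pmb u$. The only cosmetic difference is that you sum over $r\ge1$ and then invoke $A_0=0$, whereas the paper sums over $r\ge0$ from the start.
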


\begin{proof}
We first prove \eqref{eqn:rhscoeff}. By Definition \ref{def:jackzeta}, grouping the tableaux $M$ according to the chain $T$ they determine, $\zeta^J_{\lambda/\mu}(\pmb s;\alpha)$ is the sum of $j_\mu(\alpha)w(T;\alpha)\zeta(\pmb t_T)$ over the chains $T$ from $\mu$ to $\lambda$ (note that $w(M)=j_\mu(\alpha)w(T;\alpha)$ by Theorem \ref{thm:stanley63}).
If every $\lambda^{(k)}/\lambda^{(k-1)}$ is a horizontal strip, then $\pmb u$ is itself a chain and $\zeta(\pmb u)=\zeta(\pmb t_T)$ only for $T=\pmb u$, so that the coefficient of $\zeta(\pmb u)$ is $j_\mu(\alpha)w(\pmb u;\alpha)=j_\mu(\alpha)\prod_k\psi_{\lambda^{(k)}/\lambda^{(k-1)}}(\alpha)$.
Otherwise the coefficient of $\zeta(\pmb u)$ is $0$, and so is $\prod_k\psi_{\lambda^{(k)}/\lambda^{(k-1)}}(\alpha)$. This proves \eqref{eqn:rhscoeff}.

We next prove \eqref{eqn:lhscoeff}. By Definition \ref{def:jackzeta},
\[\zeta^J_{\lambda'/\mu'}\left(\pmb s';\frac1\alpha\right)=j_{\mu'}\!\left(\frac1\alpha\right)\sum_{T'}w\left(T';\frac1\alpha\right)\zeta(\pmb t_{T'}),
\quad T':\ \mu'=\kappa^{(0)}\subset\cdots\subset\kappa^{(\ell)}=\lambda'.\]
Since $\pmb s'$ carries the variables transposed, the $i$-th entry of $\pmb t_{T'}$ is the sum of the $s_{ij}$ over the boxes of $\kappa^{(i)\prime}/\kappa^{(i-1)\prime}$, which is a vertical strip of $\lambda/\mu$.
As $\omega_{-\frac1\alpha}$ merges consecutive entries, the coefficient of $\zeta(\pmb u)$ is $j_{\mu'}(\frac1\alpha)$ times
\begin{equation}\label{eqn:lhssum}
\sum_{T'}w\left(T';\frac1\alpha\right)\prod_{k=1}^{m}\binom{-1/\alpha}{r_k},
\end{equation}
where $T'$ runs over the chains whose transposed sequence of steps refines $\pmb u$ and $r_k$ is the number of steps of $T'$ inside $\lambda^{(k)\prime}/\lambda^{(k-1)\prime}$. Here we say that the transposed sequence of steps refines $\pmb u$ if there are indices $0=i_0<\cdots<i_m=\ell$ with $\kappa^{(i_k)}=\lambda^{(k)\prime}$ for all $k$. Then $r_k=i_k-i_{k-1}$. Therefore, such a $T'$ corresponds bijectively to the $m$-tuple $(T_1',\ldots,T_m')$ of chains
\[T_k':\ \lambda^{(k-1)\prime}=\kappa^{(i_{k-1})}\subset\cdots\subset\kappa^{(i_k)}=\lambda^{(k)\prime},\]
of lengths $r_1,\ldots,r_m$, and every step of $T'$ belongs to exactly one $T_k'$. Regrouping the two products in \eqref{eqn:lhssum} along this partition of the steps,
\begin{align*}
\prod_{i=1}^{\ell}\psi_{\kappa^{(i)}/\kappa^{(i-1)}}\left(\frac1\alpha\right)
\cdot\prod_{k=1}^{m}\binom{-1/\alpha}{r_k}
&=\prod_{k=1}^{m}\left(\binom{-1/\alpha}{r_k}
\prod_{i=i_{k-1}+1}^{i_k}\psi_{\kappa^{(i)}/\kappa^{(i-1)}}\left(\frac1\alpha\right)\right)\\
&=\prod_{k=1}^{m}\binom{-1/\alpha}{r_k}w\left(T_k';\frac1\alpha\right).
\end{align*}
Since the $k$-th factor depends only on $T_k'$, we obtain
\begin{align*}
\eqref{eqn:lhssum}&=\prod_{k=1}^{m}\left(\sum_{r\ge0}\binom{-1/\alpha}{r}
\sum_{T_k'}w\left(T_k';\frac1\alpha\right)\right)\\
&=\prod_{k=1}^{m}\left(\sum_{r\ge0}\binom{-1/\alpha}{r}
A_r\!\left(\lambda^{(k)\prime}/\lambda^{(k-1)\prime};\frac1\alpha\right)\right),
\end{align*}
the inner sum being over the chains $T_k'$ from $\lambda^{(k-1)\prime}$ to $\lambda^{(k)\prime}$ of length $r$. Multiplying by $j_{\mu'}(\frac1\alpha)$, we obtain \eqref{eqn:lhscoeff}.
\end{proof}

\begin{exam}\label{ex:factor21}
Let $\lambda=(2,1)$, $\mu=\emptyset$ and $\pmb u:\emptyset\subsetneq(1)\subsetneq(2,1)$ as in Example \ref{ex:osp21}(1), so that $m=2$. By \eqref{eqn:rhscoeff} and Example \ref{ex:chains21},
\[[\zeta(\pmb u)]\,\zeta^J_{(2,1)}(\pmb s;\alpha)=A_1\bigl((1)/\emptyset;\alpha\bigr)A_1\bigl((2,1)/(1);\alpha\bigr)=1\cdot(\alpha+2).\]
For \eqref{eqn:lhscoeff} note that $\lambda'=(2,1)$. The chains $T'$ in $\lambda'$ whose transposed sequence of steps refines $\pmb u$ are those whose first step is the box $(1,1)$; the remaining two boxes are then added in one step or in two steps. With their weights at $\frac1\alpha$ and the factors of \eqref{eqn:lhssum} they are
\[
\begin{array}{llll}
T' & w(T';\frac1\alpha) & (r_1,r_2) & \prod_k\binom{-1/\alpha}{r_k}\\\hline
\emptyset\subset(1)\subset(2,1) & \frac{1+2\alpha}{\alpha} & (1,1) & \frac{1}{\alpha^2}\\[6pt]
\emptyset\subset(1)\subset(2)\subset(2,1) & \frac{2(1+2\alpha)}{1+\alpha} & (1,2) & -\frac{1+\alpha}{2\alpha^3}\\[6pt]
\emptyset\subset(1)\subset(1,1)\subset(2,1) & \frac{2(2+\alpha)}{1+\alpha} & (1,2) & -\frac{1+\alpha}{2\alpha^3}
\end{array}
\]
so that \eqref{eqn:lhssum} equals $\frac{1+2\alpha}{\alpha^3}-\frac{1+2\alpha}{\alpha^3}-\frac{2+\alpha}{\alpha^3}=-\frac{\alpha+2}{\alpha^3}$. On the other hand the right-hand side of \eqref{eqn:lhscoeff} is
\[\Sigma_{(1)}\left(-\frac1\alpha;\frac1\alpha\right)\Sigma_{(2,1)/(1)}\left(-\frac1\alpha;\frac1\alpha\right)
=\left(-\frac1\alpha\right)\cdot\frac{\alpha+2}{\alpha^2}\]
by Example \ref{ex:local}. Multiplying by $(-1)^3$ gives $\alpha^{-3}(\alpha+2)$, the coefficient found above, as Theorem \ref{thm:general} asserts.
\end{exam}
Combining Lemmas \ref{lem:local} and \ref{lem:factor}, we obtain the duality for every skew shape.
\begin{theorem}\label{thm:general}
For any partitions $\mu\subset\lambda$, we have
\[(-1)^{|\lambda|-|\mu|}\omega_{-\frac{1}{\alpha}}\left(\zeta_{\lambda'/\mu'}^J\left(\pmb s';\frac{1}{\alpha}\right)\right)
=\frac{1}{\alpha^{|\lambda|+|\mu|}}\zeta_{\lambda/\mu}^J\left(\pmb s;\alpha\right).\]
In particular, for every partition $\lambda$ of $n$, we have
\[(-1)^{n}\omega_{-\frac{1}{\alpha}}\left(\zeta_{\lambda'}^J\left(\pmb s';\frac{1}{\alpha}\right)\right)
=\frac{1}{\alpha^{n}}\zeta_{\lambda}^J\left(\pmb s;\alpha\right).\]
\end{theorem}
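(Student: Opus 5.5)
The plan is to compare the coefficients of both sides in the basis $\{\zeta(\pmb u)\}$ of $\mathfrak Z_{\lambda/\mu}$, where $\pmb u$ runs over the sequences \eqref{eq:ordered_set_partition} from $\mu$ to $\lambda$. Both sides lie in this subspace: the right-hand side by definition, and the left-hand side because $\omega_{-1/\alpha}$ preserves it. Transposed chains in $\lambda'/\mu'$ give vertical strips of $\lambda/\mu$, so the terms of $\zeta^J_{\lambda'/\mu'}(\pmb s';1/\alpha)$ are $\zeta(\pmb u)$ for suitable $\pmb u$. Since the $\zeta(\pmb u)$ are formally independent, it suffices to show for each fixed $\pmb u$ that
\[(-1)^{d}[\zeta(\pmb u)]\,\omega_{-\frac1\alpha}\Bigl(\zeta^J_{\lambda'/\mu'}\bigl(\pmb s';\tfrac1\alpha\bigr)\Bigr)=\alpha^{-|\lambda|-|\mu|}[\zeta(\pmb u)]\,\zeta^J_{\lambda/\mu}(\pmb s;\alpha),\qquad d=|\lambda|-|\mu|.\]
The degenerate case $\mu=\lambda$ is trivial: both sides are the empty-shape constant, namely $j_{\mu'}(1/\alpha)$ and $\alpha^{-2|\mu|}j_\mu(\alpha)$, and these agree by Lemma \ref{lem:jtrans}. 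So we may assume $m\ge1$.

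Next, insert the two coefficient formulas of Lemma \ref{lem:factor}. Write $d_k=|\lambda^{(k)}|-|\lambda^{(k-1)}|$, so that $\sum_k d_k=d$. Each step satisfies $\lambda^{(k-1)}\subsetneq\lambda^{(k)}$, so Lemma \ref{lem:local} applies to it with $\nu=\lambda^{(k)}$ and $\mu=\lambda^{(k-1)}$:
\[\Sigma_{\lambda^{(k)\prime}/\lambda^{(k-1)\prime}}\Bigl(-\tfrac1\alpha;\tfrac1\alpha\Bigr)=(-\alpha)^{-d_k}A_1\bigl(\lambda^{(k)}/\lambda^{(k-1)};\alpha\bigr).\]
Multiplying over $k$ turns \eqref{eqn:lhscoeff} into $j_{\mu'}(1/\alpha)(-\alpha)^{-d}\prod_kA_1(\lambda^{(k)}/\lambda^{(k-1)};\alpha)$.

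Multiplying this by $(-1)^d$ gives $\alpha^{-d}j_{\mu'}(1/\alpha)\prod_kA_1(\lambda^{(k)}/\lambda^{(k-1)};\alpha)$. Lemma \ref{lem:jtrans} replaces $j_{\mu'}(1/\alpha)$ by $\alpha^{-2|\mu|}j_\mu(\alpha)$, which yields $\alpha^{-d-2|\mu|}j_\mu(\alpha)\prod_kA_1(\lambda^{(k)}/\lambda^{(k-1)};\alpha)$. Since $d+2|\mu|=|\lambda|+|\mu|$, this is $\alpha^{-|\lambda|-|\mu|}$ times \eqref{eqn:rhscoeff}. That proves the coefficient identity for every $\pmb u$, and hence the theorem. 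The second statement is the case $\mu=\emptyset$, using $j_\emptyset=1$.

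The main point needing care is already contained in Lemma \ref{lem:factor}: one must check that the coefficient of $\zeta(\pmb u)$ on the left is correctly the product over steps, including those $\pmb u$ whose steps are not horizontal strips. For such $\pmb u$ the right side vanishes, and the left side must also vanish. This is guaranteed because $A_1=\psi=0$ for non-horizontal strips and Lemma \ref{lem:local} still applies. Given the earlier lemmas, the rest is bookkeeping of powers of $\alpha$ and signs.
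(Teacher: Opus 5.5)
Your proposal is correct and follows the paper's proof essentially verbatim: you compare coefficients of $\zeta(\pmb u)$ via Lemma \ref{lem:factor}, apply Lemma \ref{lem:local} to each step, and finish with Lemma \ref{lem:jtrans} and $d+2|\mu|=|\lambda|+|\mu|$. Your separate treatment of the degenerate case $\mu=\lambda$ and your remark on non-horizontal steps are small additions the paper leaves implicit.
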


\begin{proof}
Fix a sequence of partitions $\pmb u$ as in \eqref{eq:ordered_set_partition}. Applying Lemma \ref{lem:local} with $\mu=\lambda^{(k-1)}$ and $\nu=\lambda^{(k)}$ for $k=1,\ldots,m$ and multiplying the resulting $m$ identities, we obtain from \eqref{eqn:rhscoeff} and \eqref{eqn:lhscoeff} that
\begin{align*}
[\zeta(\pmb u)]\,\omega_{-\frac{1}{\alpha}}\left(\zeta^J_{\lambda'/\mu'}\left(\pmb s';\frac1\alpha\right)\right)
&=j_{\mu'}\!\left(\frac1\alpha\right)\prod_{k=1}^{m}(-\alpha)^{-|\lambda^{(k)}/\lambda^{(k-1)}|}A_1\!\left(\lambda^{(k)}/\lambda^{(k-1)};\alpha\right)\\
&=(-\alpha)^{-d}\alpha^{-2|\mu|}[\zeta(\pmb u)]\,\zeta^J_{\lambda/\mu}(\pmb s;\alpha),
\end{align*}
because $\sum_{k=1}^{m}|\lambda^{(k)}/\lambda^{(k-1)}|=d$ and $j_{\mu'}(\frac1\alpha)=\alpha^{-2|\mu|}j_\mu(\alpha)$ by Lemma \ref{lem:jtrans}. Since $d+2|\mu|=|\lambda|+|\mu|$, multiplying by $(-1)^{d}$ and letting $\pmb u$ run over the basis of $\mathfrak Z_{\lambda/\mu}$ gives the first assertion.
The second assertion is the case $\mu=\emptyset$.
\end{proof}

\begin{cor}\label{cor:alpha1}
Let $\lambda$ be a partition of $n$. For every chain $T$ from $\emptyset$ to $\lambda$ we have $w(T;1)=h_\lambda$, so that $\zeta^J_\lambda(\pmb s;1)=h_\lambda\zeta_\lambda(\pmb s)$. Consequently, Theorem \ref{thm:general} with $\mu=\emptyset$ specializes at $\alpha=1$ to
\[(-1)^{n}\omega_{-1}\left(\zeta_{\lambda'}(\pmb s')\right)=\zeta_{\lambda}(\pmb s)\]
for the Schur multiple zeta functions.
\end{cor}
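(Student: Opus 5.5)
The corollary has two parts. First, $w(T;1)=h_\lambda$ for every chain $T$ from $\emptyset$ to $\lambda$. Second, the duality specializes at $\alpha=1$ to the stated identity; this part follows formally from the first together with Theorem \ref{thm:general}.

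For the first claim I would not compute the products of $B$ and $C$ step by step. The cleanest route uses the known identity $J_\lambda(x;1)=h_\lambda s_\lambda(x)$ recalled in the introduction, combined with Stanley's formula. I would compare it with \eqref{eqn:Phi} via the tableau form $J_\lambda=\sum_M w(M)x^M$. The problem is that $s_\lambda=\sum_M x^M$ only shows that the sums of the weights agree over tableaux with the same content. That is exactly the information loss this paper emphasizes, so the comparison alone does not suffice.

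Instead I would argue directly on a single step $\mu\subset\nu$ with $\nu/\mu$ a horizontal strip, and show that $\psi_{\nu/\mu}(1)=h_\nu/h_\mu$. Then the product over the steps telescopes to $h_\lambda/h_\emptyset=h_\lambda$.

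At $\alpha=1$ both cases in the definition of $B_{\nu,\mu}(s)$ become $\nu'_j-i+\nu_i-j+1$, which is the hook length $h_\nu(s)$. Likewise $C_{\nu,\mu}(s)$ becomes the hook length $h_\mu(s)$. The case distinction disappears precisely because the $\alpha$-coefficient and the constant shift by one in opposite directions. Hence $\psi_{\nu/\mu}(1)=\prod_{s\in\nu}h_\nu(s)/\prod_{s\in\mu}h_\mu(s)=h_\nu/h_\mu$. This is the only real computation; I expect no obstacle beyond checking that the two cases indeed coincide at $\alpha=1$.

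Summing over tableaux then gives $\zeta^J_\lambda(\pmb s;1)=h_\lambda\sum_M M^{-\pmb s}=h_\lambda\zeta_\lambda(\pmb s)$, using $w(M)=j_\emptyset w(T;1)$. The same argument applied to $\lambda'$ gives $\zeta^J_{\lambda'}(\pmb s';1)=h_{\lambda'}\zeta_{\lambda'}(\pmb s')$, and $h_{\lambda'}=h_\lambda$ because conjugation preserves hook lengths.

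Finally, I would set $\alpha=1$ in Theorem \ref{thm:general} with $\mu=\emptyset$. The map $\omega_{-1/\alpha}$ becomes $\omega_{-1}$, and the factor $\alpha^{-n}$ becomes $1$. Both sides carry the common factor $h_\lambda$, and cancelling it yields $(-1)^n\omega_{-1}(\zeta_{\lambda'}(\pmb s'))=\zeta_\lambda(\pmb s)$.

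Specializing $\alpha=1$ is legitimate because the coefficients are rational functions of $\alpha$ with no pole at $\alpha=1$. The only possible denominators are $C$-factors, and these are positive hook lengths at $\alpha=1$.
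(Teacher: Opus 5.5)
Your proposal is correct and follows the paper's proof essentially verbatim. At $\alpha=1$ both cases of $B_{\nu,\mu}$ and $C_{\nu,\mu}$ reduce to hook lengths, so $\psi_{\nu/\mu}(1)=h_\nu/h_\mu$ telescopes along the chain to $h_\lambda$, and the Schur duality then follows from Theorem \ref{thm:general} together with $h_{\lambda'}=h_\lambda$. Your remark that specializing at $\alpha=1$ is legitimate, because no poles occur there, is a detail the paper leaves implicit.
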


\begin{proof}
For $\alpha=1$ the two cases in the definition of $B_{\nu,\mu}(s)$ give the same value $\nu'_j-i+\nu_i-j+1=h_\nu(s)$, the hook length of $s=(i,j)$ in $\nu$, and likewise $C_{\nu,\mu}(s)=h_\mu(s)$.
Therefore, $\psi_{\nu/\mu}(1)=h_\nu/h_\mu$ for every horizontal strip $\nu/\mu$, and the product over the steps of $T$ telescopes to $h_\lambda/h_\emptyset=h_\lambda$.
The remaining assertions follow from Definition \ref{def:jackzeta}, the relation $h_\lambda=h_{\lambda'}$ and Theorem \ref{thm:general}.
\end{proof}

\begin{remark}\label{rem:general_f}
The proof of Theorem \ref{thm:general} uses only the linear independence of the $\zeta(\pmb u)$, so that we may apply any linear map on $\mathfrak Z_{\lambda/\mu}$ to it. For example, we can combine Stanley's weights with the generalization of the Schur functions considered by Bachmann and Charlton \cite{bc}.
Let $A$ be a commutative $\mathbb{Q}(\alpha)$-algebra, let $f:\mathbb{Z}_{>0}\times S\to A$ be any map, and let $N\ge1$. For partitions $\mu\subset\lambda$ and variables $\pmb s=(s_{ij})$ on $\lambda/\mu$, we put
\[J^f_{\lambda/\mu}(\pmb s;\alpha)=\sum_{M}w(M)\prod_{(i,j)\in\lambda/\mu}f(m_{ij},s_{ij}),\]
the sum being over all $M\in{\rm SSYT}(\lambda/\mu)$ with entries at most $N$. For $f(m,s)=x_m$ this is $J_{\lambda/\mu}(x_1,\ldots,x_N;\alpha)$ by Theorem \ref{thm:stanley63}, for $f(m,s)=m^{-s}$ it tends to $\zeta^J_{\lambda/\mu}(\pmb s;\alpha)$ as $N\to\infty$, and for $\mu=\emptyset$ and $\alpha=1$ it is $h_\lambda$ times the function of Bachmann and Charlton \cite{bc}.
Then Theorem \ref{thm:general} can be generalized to $J^f_{\lambda/\mu}(\pmb s;\alpha)$. For $f(m,s)=x_m$, the generalized identity is Proposition \ref{prop:skewdual}. Thus Theorem \ref{thm:general} is a refinement of Stanley's duality which contains it as a special case, although our proof uses the latter.
\end{remark}

\section{Expansions in terms of Schur multiple zeta functions and the $(2^i,1^j)$-formula}\label{sec:2i1j}
In this section, we show that any expansion of a Jack symmetric function in terms of the Schur functions lifts to the Jack multiple zeta functions after a symmetrization of the variables, and we apply this to the following theorem.
\begin{theorem}[{\cite[(47)]{st89}}, {\cite[Theorem~1.1]{ko93}}]\label{thm:stanley47}
    \[J_{(2^i,1^j)}=\sum_{r=0}^i(i)_r(\alpha+i+j)_r(i-r-\alpha)_{i-r}(i+j-r)!s_{(2^r,1^{2(i-r)+j})}.\]
\end{theorem}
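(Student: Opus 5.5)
The plan is to compare both sides in the monomial basis, using Stanley's combinatorial formula (Theorem \ref{thm:stanley63}) on the left and Kostka numbers on the right. Put $n=2i+j$ and $\lambda=(2^i,1^j)$. By triangularity of $J_\lambda$ in the dominance order, only the monomials $m_{\nu}$ with $\nu\le\lambda$ occur, and these are exactly $\nu_k=(2^k,1^{n-2k})$ with $0\le k\le i$. Likewise, any Schur function in the expansion has a shape dominated by $\lambda$, so it is $s_{(2^r,1^{n-2r})}$ with $r\le i$. The statement is therefore a finite triangular system. Write
\[[m_{\nu_k}]J_\lambda=\sum_{r\ge k}K_{(2^r,1^{n-2r}),\nu_k}\,c_r .\]
For two-column shapes the Kostka number $K_{(2^r,1^{n-2r}),(2^k,1^{n-2k})}$ has a closed form: each value repeated twice must sit in both columns, and a short count gives a binomial-type expression. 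So the plan is to compute $[m_{\nu_k}]J_\lambda$ in closed form and then check that the claimed $c_r$ satisfy this triangular system.

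To compute $[m_{\nu_k}]J_\lambda$, I would use Theorem \ref{thm:stanley63} with the content $(2^k,1^{n-2k})$ taken in a convenient order. Every chain then has $n-k$ steps. Each step of size $2$ must be a horizontal domino, consisting of one box in column $1$ and one box in column $2$. Each step of size $1$ adds a single box, in either column. The factors $\psi_{\nu/\mu}$ for two-column shapes $\nu=(2^a,1^b)$ can be read off from the definition of $B$ and $C$, as in Example \ref{ex:intro21}. Most factors cancel and each step leaves a ratio of linear forms in $\alpha$, in $a$ and in $b$.

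Since only the sum over all orderings matters, I would cut down the work by choosing the repeated values to be the largest ones. This is allowed because $J_\lambda$ is symmetric. The chain is then: first fill $\lambda$ minus $k$ dominoes with single boxes, and then add the $k$ dominoes. The first part gives the coefficient of $x_1\cdots x_{n-2k}$ in the skew function $J_\lambda$ restricted to a smaller shape, which is known up to normalization. The domino part then telescopes.

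The hard step is the resulting sum over where the single column-$2$ boxes are inserted among the column-$1$ boxes. It is a $q=1$ hypergeometric sum in $\alpha$, and I expect it to close by a Chu--Vandermonde identity to something like $(i)_k(\alpha+i+j)_k\times(\text{factorials})$. If it does not close directly, my fallback is to go through Theorem \ref{thm:stanley33}. That theorem turns $J_\lambda(\alpha)$ into the two-row function $J_{(i+j,i)}(1/\alpha)$. For two-row shapes the expansion in Schur functions can be obtained from the Pieri rule applied to products $J_{(a)}J_{(b)}$, which are computable because one-row Jack functions are $\alpha$-binomial in the power sums.

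The last step is to invert the triangular Kostka system, again by a Vandermonde-type summation, and to match the result with $(i)_r(\alpha+i+j)_r(i-r-\alpha)_{i-r}(i+j-r)!$. As a check, $r=i$ and $\alpha=1$ must give $c_i=h_\lambda$, and at the other extreme $c_0$ should vanish when $\alpha=0$.
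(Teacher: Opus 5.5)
Your skeleton (monomial coefficients from Theorem~\ref{thm:stanley63}, two-column Kostka numbers, inversion of the triangular system) is the route of Koike's proof, which the paper cites rather than reproves. As written, however, the proposal does not prove the statement. It locates the difficulty where there is none, and it leaves the step that carries all the content as an expectation. Once the repeated values are taken largest, as you suggest, nothing is left to sum. The chain first reaches the forced shape $(2^{i-k},1^j)$ through single boxes, and this part contributes $(n-2k)!$ by the normalization. In each domino step $(2^a,1^j)\subset(2^{a+1},1^j)$ both columns meet the strip, so only the second cases of $B$ and $C$ occur, and $\psi=(a+1)(\alpha+a+1+j)$. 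Hence $[m_{(2^k,1^{n-2k})}]J_\lambda=(n-2k)!\,(i)_k(\alpha+i+j)_k$, which is Stanley's monomial expansion. The same argument gives $K_{(2^r,1^{n-2r}),(2^k,1^{n-2k})}=f^{(2^{r-k},1^{n-2r})}=\binom{n-2k}{r-k}-\binom{n-2k}{r-k-1}$.

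Write $c^{(i,j)}_r$ for the claimed coefficients. Then $c^{(i,j)}_r=(i)_k(\alpha+i+j)_k\,c^{(i-k,j)}_{r-k}$, so the $k$-th equation of your system is $(i)_k(\alpha+i+j)_k$ times the $k=0$ equation for $(i-k,j)$. Everything therefore reduces to comparing the coefficients of $x_1\cdots x_n$:
\[
(2i+j)!=\sum_{r=0}^{i}(i)_r(\alpha+i+j)_r(i-r-\alpha)_{i-r}(i+j-r)!\left[\binom{2i+j}{r}-\binom{2i+j}{r-1}\right].
\]
This identity is the real content of the theorem (Koike's binomial identity), and your proposal never establishes it. It is also not a plain Chu--Vandermonde sum. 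Its term ratio shows that the right-hand side is $(i-\alpha)_i(i+j)!$ times the terminating very-well-poised ${}_4F_3\bigl(a,1+\frac a2,b,c;\frac a2,1+a-b,1+a-c;-1\bigr)$ with $a=-n-1$, $b=-i$, $c=-\alpha-i-j$. The classical summation of such a series (a limiting case of Dougall's ${}_5F_4$) evaluates it to $(n)_i/(i-\alpha)_i$, which gives exactly $n!$.

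Your fallback via Theorem~\ref{thm:stanley33} would not help, because $\omega_{-1/\alpha}$ does not send Schur functions to multiples of Schur functions; already $\omega_\beta(s_{(2)})=\frac{\beta^2+\beta}{2}s_{(2)}+\frac{\beta^2-\beta}{2}s_{(1,1)}$. Finally, your proposed check is wrong. At $\alpha=0$, $c_0=(i-\alpha)_i(i+j)!$ equals $i!\,(i+j)!$, and in fact every $c_r(0)=i!\,(i+j)!$, consistent with $J_\lambda(x;0)=i!\,(i+j)!\,e_{i+j}e_i$. It is at $\alpha=1$ that the $c_r$ with $r<i$ vanish.
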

Stanley proved the monomial expansion of $J_{(2^i,1^j)}$ \cite[(46)]{st89} and stated this formula as a conjecture \cite[(47)]{st89}, expecting that it could be deduced from the monomial expansion.
A complete proof was given by Koike \cite[Theorem~1.1]{ko93}, who computed the Kostka numbers $K_{(2^r,1^{2(i-r)+j}),\,(2^k,1^{2(i-k)+j})}$ by the hook length formula and reduced the statement to a binomial identity.

We start with an example which shows why a symmetrization of the variables is needed.

\begin{exam}\label{ex:21naive}
Let $\lambda=(2,1)$. By Example \ref{ex:intro21}(2), the four chains from $\emptyset$ to $(2,1)$ give
\begin{align*}
\zeta^{J}_{(2,1)}\left(\ytableaushort{{s_{11}}{s_{12}},{s_{21}}};\alpha\right)&=\frac{2(\alpha+2)}{\alpha+1}\zeta(s_{11},s_{12},s_{21})+\frac{2(2\alpha+1)}{\alpha+1}\zeta(s_{11},s_{21},s_{12})\\
&\qquad+(\alpha+2)\zeta(s_{11},s_{12}+s_{21})+(\alpha+2)\zeta(s_{11}+s_{12},s_{21})\\
&=-\frac{\alpha^2+\alpha-2}{\alpha+1}\zeta_{(1,1,1)}\left(\ytableaushort{{s_{11}},{s_{12}},{s_{21}}}\right)-\frac{\alpha^2-\alpha}{\alpha+1}\zeta_{(1,1,1)}\left(\ytableaushort{{s_{11}},{s_{21}},{s_{12}}}\right)\\
&\qquad+(\alpha+2)\zeta_{(2,1)}\left(\ytableaushort{{s_{11}}{s_{12}},{s_{21}}}\right),
\intertext{and if $s_{12}=s_{21}$ or $\alpha=1$, this becomes}
&=2(1-\alpha)\zeta_{(1,1,1)}\left(\ytableaushort{{s_{11}},{s_{12}},{s_{21}}}\right)+(\alpha+2)\zeta_{(2,1)}\left(\ytableaushort{{s_{11}}{s_{12}},{s_{21}}}\right).
\end{align*}
Thus the expansion $J_{(2,1)}=(\alpha+2)s_{(2,1)}-2(\alpha-1)s_{(1,1,1)}$ lifts to $\zeta^J_{(2,1)}$ only under the condition $s_{12}=s_{21}$ or $\alpha=1$, because the first two chains have different weights.
\end{exam}

Throughout this section we fix an integer $n\ge1$ and a tuple of variables $\pmb s=(s_1,\ldots,s_n)$. For a partition $\lambda$ of $n$, by a labeling of $\lambda$ we mean a bijection $\phi$ from the set of boxes of $\lambda$ onto $\{1,\ldots,n\}$ with $\phi((1,1))=1$, and we write $\pmb s^{\phi}$ for the tableau of shape $\lambda$ carrying $s_{\phi((i,j))}$ in the box $(i,j)$. Let $\mathfrak S_{n-1}$ denote the symmetric group on $\{2,\ldots,n\}$, acting on labelings by $\sigma\cdot\phi=\sigma\circ\phi$. We put
\[\sum_{\rm sym}\zeta_\lambda(\pmb s):=\sum_{\sigma\in\mathfrak S_{n-1}}\zeta_\lambda\left(\pmb s^{\sigma\phi}\right),
\qquad
\sum_{\rm sym}\zeta_\lambda^{J}(\pmb s;\alpha):=\sum_{\sigma\in\mathfrak S_{n-1}}\zeta_\lambda^{J}\left(\pmb s^{\sigma\phi};\alpha\right).\]
In other words, ${\rm sym}$ means that we symmetrize over all the variables except the one sitting in the box $(1,1)$. Any two labelings of $\lambda$ differ by an element of $\mathfrak S_{n-1}$, so these sums are independent of the choice of $\phi$. In particular they are simultaneously defined for all partitions of $n$, which is what makes it possible to compare different shapes below.

\begin{definition}
For a composition $\boldsymbol{c}=(c_1,\ldots,c_\ell)$ of $n$ we set
\[O_{\boldsymbol{c}}(\pmb s):=\sum_{(I_1,\ldots,I_\ell)}\zeta\left(\sum_{a\in I_1}s_a,\ \ldots,\ \sum_{a\in I_\ell}s_a\right),\]
where $(I_1,\ldots,I_\ell)$ runs over all ordered set partitions of $\{1,\ldots,n\}$ with $|I_k|=c_k$ for every $k$ and $1\in I_1$.
\end{definition}

For $\boldsymbol{c}=(c_1,\ldots,c_\ell)$ we write
\[e(\boldsymbol{c})=(c_1-1)!\,c_2!\cdots c_\ell!,\]
and for a symmetric function $f$ we denote by $[x^{\boldsymbol{c}}]f$ the coefficient of $x_1^{c_1}x_2^{c_2}\cdots x_\ell^{c_\ell}$ in $f$.

\begin{lemma}\label{lem:orbit}
For every partition $\lambda$ of $n$ we have
\[\sum_{\rm sym}\zeta_\lambda(\pmb s)=\sum_{\boldsymbol{c}}e(\boldsymbol{c})\left([x^{\boldsymbol{c}}]s_\lambda\right)O_{\boldsymbol{c}}(\pmb s),
\qquad
\sum_{\rm sym}\zeta_\lambda^{J}(\pmb s;\alpha)=\sum_{\boldsymbol{c}}e(\boldsymbol{c})\left([x^{\boldsymbol{c}}]J_\lambda\right)O_{\boldsymbol{c}}(\pmb s),\]
where $\boldsymbol{c}$ runs over all compositions of $n$.
\end{lemma}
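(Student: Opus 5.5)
Write $\zeta^J_\lambda(\pmb s;\alpha)=\sum_T w(T;\alpha)\zeta(\pmb t_T)$ over the chains $T$ from $\emptyset$ to $\lambda$, as in the proof of Lemma \ref{lem:factor}. The Schur case is the special case $\alpha=1$ divided by $h_\lambda$, since $w(T;1)=h_\lambda$ by Corollary \ref{cor:alpha1} and $J_\lambda(x;1)=h_\lambda s_\lambda$. So it suffices to prove the Jack identity.

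Fix a labeling $\phi$ and a chain $T$ with $\boldsymbol{c}(T)=\boldsymbol{c}=(c_1,\ldots,c_\ell)$. The chain $T$ determines an ordered set partition of the boxes of $\lambda$, namely the boxes added at each step. Applying $\sigma\phi$ to these boxes gives an ordered set partition $(I_1,\ldots,I_\ell)$ of $\{1,\ldots,n\}$ with $|I_k|=c_k$. Moreover $1\in I_1$, because the box $(1,1)$ is always added at the first step and $\phi((1,1))=1$. Then $\zeta(\pmb t_T)$ evaluated at $\pmb s^{\sigma\phi}$ equals
\[\zeta\Bigl(\sum_{a\in I_1}s_a,\ldots,\sum_{a\in I_\ell}s_a\Bigr).\]
As $\sigma$ runs over $\mathfrak S_{n-1}$, which fixes $1$ and acts simply transitively on the labelings, each admissible ordered set partition with block sizes $\boldsymbol{c}$ and $1\in I_1$ occurs exactly the same number of times. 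The number of such ordered set partitions is $\binom{n-1}{c_1-1,c_2,\ldots,c_\ell}$. Since $|\mathfrak S_{n-1}|=(n-1)!$, each one occurs
\[\frac{(n-1)!}{\binom{n-1}{c_1-1,c_2,\ldots,c_\ell}}=e(\boldsymbol{c})\]
times. Hence
\[\sum_{\sigma}\zeta(\pmb t_T)\big|_{\pmb s^{\sigma\phi}}=e(\boldsymbol{c})\,O_{\boldsymbol{c}}(\pmb s).\]

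Summing over $T$ gives
\[\sum_{\rm sym}\zeta^J_\lambda=\sum_{\boldsymbol{c}}e(\boldsymbol{c})\Bigl(\sum_{\boldsymbol{c}(T)=\boldsymbol{c}}w(T;\alpha)\Bigr)O_{\boldsymbol{c}}(\pmb s).\]
By \eqref{eqn:Phi} with $\mu=\emptyset$, the inner sum is the coefficient of $M_{\boldsymbol{c}}$ in $J_\lambda$. This coefficient is $[x^{\boldsymbol{c}}]J_\lambda$, because the monomial $x_1^{c_1}\cdots x_\ell^{c_\ell}$ appears only in $M_{\boldsymbol{c}}$. This gives the claim.

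The main obstacle is the counting step. The subtle points are that the box $(1,1)$ always lies in the first step, and that the orbit count is independent of $T$. The count follows from the free transitive action of $\mathfrak S_{n-1}$ on labelings.
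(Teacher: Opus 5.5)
Your proposal is correct and follows essentially the same route as the paper. Both arguments write $\zeta^J_\lambda(\pmb s^{\sigma\phi};\alpha)$ as a sum of $w(T;\alpha)\zeta(\pmb t_T)$ over chains, use that $(1,1)$ is added at the first step to get $1\in I_1$, show that each chain's $\mathfrak S_{n-1}$-orbit sum is $e(\boldsymbol{c})O_{\boldsymbol{c}}(\pmb s)$, and identify $\sum_{\boldsymbol{c}(T)=\boldsymbol{c}}w(T;\alpha)$ with $[x^{\boldsymbol{c}}]J_\lambda$. The paper gets the orbit count from the stabilizer $\mathfrak S_{I_1\setminus\{1\}}\times\mathfrak S_{I_2}\times\cdots\times\mathfrak S_{I_\ell}$, while you use the equivalent count $(n-1)!/\binom{n-1}{c_1-1,c_2,\ldots,c_\ell}$.

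The only other difference is the Schur case. The paper counts the chains with $\boldsymbol{c}(T)=\boldsymbol{c}$ directly as a Kostka number, whereas you deduce it from the Jack case at $\alpha=1$ via Corollary \ref{cor:alpha1}. Your route is equally valid, since the weights are regular at $\alpha=1$: the $C$-values there are hook lengths.
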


\begin{proof}
Grouping the entries of a semi-standard Young  tableau $M$ of shape $\lambda$ according to their values, $M$ is determined by a chain
\[T:\ \emptyset=\lambda^{(0)}\subset\lambda^{(1)}\subset\cdots\subset\lambda^{(\ell)}=\lambda\]
together with the strictly increasing sequence of the values attained. Writing $I_k=\phi\left(\lambda^{(k)}/\lambda^{(k-1)}\right)$ and $\pmb t_T=\left(\sum_{a\in I_1}s_a,\ldots,\sum_{a\in I_\ell}s_a\right)$, we therefore have
\[\zeta_\lambda^{J}\left(\pmb s^{\phi};\alpha\right)=\sum_{T}w(T;\alpha)\zeta(\pmb t_T),
\qquad
\zeta_\lambda\left(\pmb s^{\phi}\right)=\sum_{T}\zeta(\pmb t_T).\]
Since $(1,1)\in\lambda^{(1)}$, we always have $1\in I_1$. Suppose that $\boldsymbol{c}(T)=\boldsymbol{c}$. Then the orbit of $(I_1,\ldots,I_\ell)$ under $\mathfrak S_{n-1}$ consists exactly of the ordered set partitions occurring in $O_{\boldsymbol{c}}(\pmb s)$, and its stabilizer is $\mathfrak S_{I_1\setminus\{1\}}\times\mathfrak S_{I_2}\times\cdots\times\mathfrak S_{I_\ell}$, of order $e(\boldsymbol{c})$. Consequently
\[\sum_{\sigma\in\mathfrak S_{n-1}}\zeta\left(\sigma\pmb t_T\right)=e(\boldsymbol{c})O_{\boldsymbol{c}}(\pmb s).\]
Finally, by Theorem \ref{thm:stanley63} the chains $T$ with $\boldsymbol{c}(T)=\boldsymbol{c}$ contribute $\sum_{T\,:\,\boldsymbol{c}(T)=\boldsymbol{c}}w(T;\alpha)=[x^{\boldsymbol{c}}]J_\lambda$, while their number is the Kostka number $[x^{\boldsymbol{c}}]s_\lambda$ (see \cite[Chapter~I]{mac}). Summing over $T$ gives the assertion.
\end{proof}

\begin{theorem}\label{thm:sym_transfer}
Let $\lambda$ be a partition of $n$ and let
\[J_\lambda(x;\alpha)=\sum_{\mu}c_{\lambda\mu}(\alpha)s_\mu(x)\]
be the expansion of $J_\lambda$ in the Schur basis. Then
\[\sum_{\rm sym}\zeta_\lambda^{J}(\pmb s;\alpha)=\sum_{\mu}c_{\lambda\mu}(\alpha)\sum_{\rm sym}\zeta_\mu(\pmb s).\]
\end{theorem}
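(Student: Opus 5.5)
The plan is to reduce everything to Lemma \ref{lem:orbit}, which writes both symmetrized sums in the same family of elements $O_{\boldsymbol{c}}(\pmb s)$ of $\mathfrak Z$, and with coefficients that depend linearly on the symmetric function. For a symmetric function $f$ of degree $n$, define the $\mathbb Q(\alpha)$-linear map
\[\Theta(f)=\sum_{\boldsymbol{c}}e(\boldsymbol{c})\left([x^{\boldsymbol{c}}]f\right)O_{\boldsymbol{c}}(\pmb s)\in\mathfrak Z,\]
where $\boldsymbol{c}$ runs over the compositions of $n$. Since $f\mapsto[x^{\boldsymbol{c}}]f$ is linear for each $\boldsymbol{c}$, and $e(\boldsymbol{c})$ and $O_{\boldsymbol{c}}(\pmb s)$ do not depend on $f$, the map $\Theta$ is linear on the degree $n$ part of $\Lambda$.

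Lemma \ref{lem:orbit} states precisely that
\[\Theta(s_\mu)=\sum_{\rm sym}\zeta_\mu(\pmb s)\quad\text{and}\quad\Theta(J_\lambda)=\sum_{\rm sym}\zeta^J_\lambda(\pmb s;\alpha)\]
for all partitions $\mu,\lambda$ of $n$. The symmetrized sums are defined simultaneously for all shapes of size $n$, using the same variables $s_1,\ldots,s_n$, and are independent of the labeling. So the right-hand sides of these identities for different shapes live in the same space and can be added.

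In the expansion $J_\lambda=\sum_\mu c_{\lambda\mu}(\alpha)s_\mu$, homogeneity forces every $\mu$ with $c_{\lambda\mu}\ne0$ to be a partition of $n$. Applying $\Theta$ to this expansion and using linearity then gives
\[\sum_{\rm sym}\zeta^J_\lambda(\pmb s;\alpha)=\Theta(J_\lambda)=\sum_\mu c_{\lambda\mu}(\alpha)\Theta(s_\mu)=\sum_\mu c_{\lambda\mu}(\alpha)\sum_{\rm sym}\zeta_\mu(\pmb s),\]
which is the assertion.

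There is essentially no obstacle beyond Lemma \ref{lem:orbit}. The one point that needs care is that the Lemma's formula holds with the \emph{same} elements $O_{\boldsymbol{c}}(\pmb s)$ and the \emph{same} factors $e(\boldsymbol{c})$ for every shape. This is true because both are defined purely in terms of $\boldsymbol{c}$ and $\pmb s$, with no reference to $\lambda$. It is also why the symmetrization was needed: Example \ref{ex:21naive} shows the unsymmetrized version fails.
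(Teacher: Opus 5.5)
Your proposal is correct and follows essentially the same route as the paper: both sides are rewritten via Lemma~\ref{lem:orbit} in the common family $e(\boldsymbol{c})O_{\boldsymbol{c}}(\pmb s)$, and the coefficients agree because $[x^{\boldsymbol{c}}]J_\lambda=\sum_\mu c_{\lambda\mu}(\alpha)[x^{\boldsymbol{c}}]s_\mu$. Packaging this as the linear map $\Theta$ is only a cosmetic reformulation of the paper's coefficient comparison.
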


\begin{proof}
By Lemma \ref{lem:orbit}, the left-hand side equals $\sum_{\boldsymbol{c}}e(\boldsymbol{c})\left([x^{\boldsymbol{c}}]J_\lambda\right)O_{\boldsymbol{c}}(\pmb s)$ and the right-hand side equals $\sum_{\boldsymbol{c}}e(\boldsymbol{c})\left(\sum_{\mu}c_{\lambda\mu}(\alpha)[x^{\boldsymbol{c}}]s_\mu\right)O_{\boldsymbol{c}}(\pmb s)$.
The two coefficients agree for every $\boldsymbol{c}$ because $[x^{\boldsymbol{c}}]J_\lambda=\sum_\mu c_{\lambda\mu}(\alpha)[x^{\boldsymbol{c}}]s_\mu$.
\end{proof}

\begin{remark}
Summation over permutations of the variables was already used for the Pieri formulas in \cite[Theorem~5.3]{nt}, and expansions of products of Schur multiple zeta functions obtained by symmetrizing the variables appear in \cite{nak,ha1}; see \cite{ha2} for the Schur $P$- and $Q$-multiple zeta functions.
\end{remark}

Combining Theorem \ref{thm:sym_transfer} with Theorem \ref{thm:stanley47}, we obtain the following formula in the case $\lambda=(2^i,1^j)$.

\begin{cor}\label{cor:2i1j}
Let $i,j\ge0$, let $\lambda=(2^i,1^j)$ and $n=2i+j$. Then we have
\begin{align}
\label{for:2^i1^j}
\sum_{\rm sym}\zeta_{(2^i,1^j)}^{J}(\pmb s;\alpha)
&=\sum_{r=0}^i(i)_r(\alpha+i+j)_r(i-r-\alpha)_{i-r}(i+j-r)!
\sum_{\rm sym}\zeta_{(2^r,1^{2(i-r)+j})}(\pmb s).
\end{align}
\end{cor}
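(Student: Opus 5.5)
The corollary is a direct specialization of Theorem \ref{thm:sym_transfer}, with the Schur expansion supplied by Theorem \ref{thm:stanley47}. I would take $\lambda=(2^i,1^j)$, a partition of $n=2i+j$, and read off its Schur coefficients from Theorem \ref{thm:stanley47}:
\[c_{\lambda\mu}(\alpha)=(i)_r(\alpha+i+j)_r(i-r-\alpha)_{i-r}(i+j-r)!\qquad\text{for }\mu=(2^r,1^{2(i-r)+j}),\ 0\le r\le i,\]
and $c_{\lambda\mu}(\alpha)=0$ for every other $\mu$.

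Each such $\mu$ is a partition of $2r+2(i-r)+j=n$. So all shapes in the sum have the same size $n$, and the symmetrized sums $\sum_{\rm sym}\zeta_\mu(\pmb s)$ are defined with respect to the same tuple $\pmb s=(s_1,\ldots,s_n)$. This is what the definition of $\sum_{\rm sym}$ requires: it is independent of the labeling, and so it is simultaneously defined for all partitions of $n$.

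Substituting these coefficients into Theorem \ref{thm:sym_transfer} gives \eqref{for:2^i1^j} immediately, since the sum over $\mu$ collapses to the sum over $r=0,\ldots,i$.

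The only point needing care is that Theorem \ref{thm:stanley47} is an identity in $\Lambda$ over $\mathbb{Q}(\alpha)$ in the same normalization of $J_\lambda$ used in Theorem \ref{thm:stanley63}, namely Stanley's integral normalization. This is exactly the normalization under which Lemma \ref{lem:orbit}, and hence Theorem \ref{thm:sym_transfer}, was proved. With that confirmed, there is no real obstacle: the proof is a one-line substitution.
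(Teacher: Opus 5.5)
Your proposal is correct and matches the paper's argument, which likewise obtains the corollary by substituting the Schur expansion of Theorem \ref{thm:stanley47} into Theorem \ref{thm:sym_transfer}. Your remarks that all the shapes $(2^r,1^{2(i-r)+j})$ are partitions of the same $n$ and that the normalization is Stanley's integral one are the right sanity checks. The normalization is confirmed, for instance, by the case $i=j=1$ against Example \ref{ex:21naive}.
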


\section{The one-row case}\label{sec:omega}
In this section, we consider the case $\lambda=(r)$.
\subsection{Interpolation and the values at $\{s\}^r$}

In this subsection, we show that the one-row Jack multiple zeta functions interpolate the multiple zeta and zeta-star functions, and we compute their values at $\{s\}^r$ by a generating function. For an integer $t\ge0$ we define
\[g_\alpha(t)=\frac{1}{t!}\prod_{j=1}^{t-1}(1+\alpha j)\quad(t\ge1),\qquad g_\alpha(0)=1,\]
so that $g_\alpha(1)=1$.

\begin{lemma}\label{lem:1r}For any $r\ge1$, we have
\[
\ytableausetup{boxsize=normal}
\frac{1}{r!}\zeta_{(r)}^J\left(\ytableaushort{{s_{1}}{\cdots}{s_{r}}};\alpha\right)
=\sum_{\substack{\boldsymbol{t}=(s_1 \square s_2 \square \cdots \square s_r)\\{\rm type}(\boldsymbol{t})=(t_1,\ldots,t_\ell)}}
\prod_{i=1}^{\ell}g_\alpha(t_i)\,\zeta(\boldsymbol{t})
\]
and
\[
\zeta_{(1^r)}^J\left(\ytableaushort{{s_{1}},{\vdots},{s_{r}}};\alpha\right)=r!\,\zeta(s_1,s_2,\ldots,s_r).
\]
\end{lemma}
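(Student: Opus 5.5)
For the one-row shape, I would compute the chain weights directly from Theorem \ref{thm:stanley63} (equivalently the factors $\psi$ of Section \ref{sec:duality}). A tableau of shape $(r)$ is a weakly increasing row. The associated chain is $\emptyset=(0)\subsetneq(c_1)\subsetneq(c_1+c_2)\subsetneq\cdots\subsetneq(r)$, where $\boldsymbol{c}=(c_1,\ldots,c_\ell)$ lists the multiplicities of the distinct values. Every step of this chain is a horizontal strip, and $\pmb t_T$ is exactly the replacement $\boldsymbol{t}$ of $(s_1,\ldots,s_r)$ with ${\rm type}(\boldsymbol{t})=\boldsymbol{c}$. Distinct compositions give distinct chains, so $\frac1{r!}\zeta^J_{(r)}=\sum_{\boldsymbol{t}}\frac{w(T_{\boldsymbol{c}})}{r!}\zeta(\boldsymbol{t})$. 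It then suffices to show that $w(T_{\boldsymbol c};\alpha)=r!\prod_i g_\alpha(c_i)$.

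The key computation is the single step factor $\psi_{(a+c)/(a)}$. Consider $\nu=(a+c)$ and $\mu=(a)$. Every column of $\nu/\mu$ lies in positions $j>a$. For $j\le a$ the column of $(1,j)$ contains no new box, so $B=\alpha(a+c-j+1)$ and $C=\alpha(a-j+1)$. For $j>a$ the column contains a new box, so $B=1+\alpha(a+c-j)$. The product of $B$ over $j\le a$ divided by the product of $C$ over $j\le a$ telescopes to $\frac{(a+c)!}{a!\,c!}$. The product over $j>a$ is $\prod_{k=0}^{c-1}(1+\alpha k)=c!\,g_\alpha(c)$. Hence
\[\psi_{(a+c)/(a)}(\alpha)=\frac{(a+c)!}{a!}\,g_\alpha(c).\]
Multiplying along the chain with $a=c_1+\cdots+c_{i-1}$, the factorial ratios telescope to $r!/0!=r!$, which gives $w=r!\prod_i g_\alpha(c_i)$. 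This proves the first formula.

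For the column shape $(1^r)$, a tableau is strictly increasing, so the only chain is $\emptyset\subset(1)\subset\cdots\subset(1^r)$. Each step adds the box $(k,1)$ to $\mu=(1^{k-1})$. For $i<k$, the column of $(i,1)$ contains the new box, so $B_{\nu,\mu}((i,1))=k-i+1$ and $C_{\nu,\mu}((i,1))=k-i$. The new box has $B=1$. The step factor is therefore $\prod_{i<k}\frac{k-i+1}{k-i}=k$, and the chain weight is $r!$. Since the only contributing index is $(s_1,\ldots,s_r)$, the second formula follows. Alternatively, it follows from the first formula and Theorem \ref{thm:general}, but the direct route is simpler.

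The only delicate point is reading off the two cases of $B$ and $C$ correctly: for columns not containing a new box, the arm term is $\alpha(\text{arm}+1)$. Once this is done, the step factor is the computation above, and the rest is telescoping.
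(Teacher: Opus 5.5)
Your proposal is correct and follows essentially the same route as the paper. Both compute the one-row step factor $\psi_{(a+c)/(a)}(\alpha)=\binom{a+c}{c}\prod_{k=0}^{c-1}(1+\alpha k)$ from the two cases of $B$ and $C$, and both telescope the binomial factors along the chain to get $r!\prod_i g_\alpha(c_i)$; for the column shape, your explicit computation that each step $(1^{k-1})\subset(1^k)$ contributes the factor $k$ (giving total weight $r!$) just fills in a detail the paper leaves implicit.
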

\begin{proof}
    By the definition of $\zeta_\lambda^J$, we have
    \begin{equation}
        \zeta_{(r)}^J\left(\ytableaushort{{s_{1}}{\cdots}{s_{r}}};\alpha\right)=\sum_{M}\frac{w(M)}{m_1^{s_1}\cdots m_r^{s_r}}.
        \label{eqn:r_zeta}
    \end{equation}
   We now compute $w(M)$. A semi-standard Young  tableau of shape $(r)$ is simply a weakly increasing sequence
\[
m_1\le m_2\le\cdots\le m_r.
\]
We first write the equalities in the weakly increasing sequence as blocks as in \eqref{type}. There exist integers $n_i,t_i$ with $1\le n_1<n_2<\cdots<n_\ell$ and $t_1+\cdots+t_\ell=r$ such that $(m_1,\dots,m_r)=(\{n_1\}^{t_1},\{n_2\}^{t_2},\dots,\{n_\ell\}^{t_{\ell}})$, where $\{n\}^t$ means that $n$ is repeated $t$ times. Letting
\[\boldsymbol{t}=(s_1+s_2+\cdots+s_{t_1},s_{t_1+1}+\cdots+s_{t_1+t_2},\ldots,s_{r-t_\ell+1}+\cdots+s_r),\]
we note that ${\rm type}(\boldsymbol{t})=(t_1,\ldots,t_\ell)$. When $\lambda=(k)$ and $\mu=(k-t)$, we have
\[
\frac{\displaystyle \prod_{s\in\lambda}B_{\lambda,\mu}(s)}{\displaystyle \prod_{s\in\mu}C_{\lambda,\mu}(s)}=\frac{\displaystyle \prod_{j=1}^{k-t}\alpha(k-j+1)\prod_{j=k-t+1}^{k}(1+\alpha(k-j))}{\displaystyle \prod_{j=1}^{k-t}\alpha(k-t-j+1)}=\binom{k}{t}\prod_{j=0}^{t-1}(1+\alpha j).
\]
Therefore, we have
\[
w(M)
=
\frac{r!}{t_1!\cdots t_\ell!}\;
\prod_{i=1}^\ell \prod_{j=0}^{t_i-1}(1+\alpha j)
=r!\prod_{i=1}^{\ell}g_\alpha(t_i),
\]
since the factor with $j=0$ is $1$. Putting this formula in \eqref{eqn:r_zeta} and collecting the tableaux $M$ with the same $(t_1,\ldots,t_\ell)$, we obtain the first assertion.

We obtain the second assertion, since only the chain $\emptyset\subset(1)\subset(1,1)\subset\cdots\subset(1^r)$ appears.
\end{proof}

\begin{cor}\label{cor:interp}
For any $r\ge1$, we have
\begin{align*}
\frac{1}{r!}\zeta_{(r)}^J\left(\ytableaushort{{s_{1}}{\cdots}{s_{r}}};1\right)&=\zetas(s_1,s_2,\ldots,s_r),\\
\frac{1}{r!}\zeta_{(r)}^J\left(\ytableaushort{{s_{1}}{\cdots}{s_{r}}};-1\right)&=\zeta(s_1,s_2,\ldots,s_r).
\end{align*}
\end{cor}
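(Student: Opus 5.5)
The plan is to specialize the formula of Lemma \ref{lem:1r} at $\alpha=1$ and $\alpha=-1$ and evaluate the weights $g_\alpha(t)$ explicitly. By Lemma \ref{lem:1r},
\[
\frac{1}{r!}\zeta_{(r)}^J\left(\ytableaushort{{s_{1}}{\cdots}{s_{r}}};\alpha\right)
=\sum_{\boldsymbol{t}}\prod_{i=1}^{\ell}g_\alpha(t_i)\,\zeta(\boldsymbol{t}),
\]
where $\boldsymbol{t}$ runs over all replacements $(s_1\square\cdots\square s_r)$. So the whole argument reduces to computing $g_{\pm1}(t)$ for $t\ge1$.

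\emph{The case $\alpha=1$.} Here
\[
g_1(t)=\frac{1}{t!}\prod_{j=1}^{t-1}(1+j)=\frac{t!}{t!}=1 .
\]
Hence every replacement $\boldsymbol{t}$ appears with coefficient $1$. The sum $\sum_{\boldsymbol{t}}\zeta(\boldsymbol{t})$ is the classical expansion of $\zetas(s_1,\ldots,s_r)$: split the weakly increasing chain $m_1\le\cdots\le m_r$ according to which consecutive inequalities are equalities. This is exactly the grouping used in the proof of Lemma \ref{lem:1r}. Alternatively, one can simply note that at $\alpha=1$ every weight equals $h_{(r)}=r!$, by Corollary \ref{cor:alpha1}, together with \eqref{eqn:intro_zeta}.

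\emph{The case $\alpha=-1$.} Here
\[
g_{-1}(t)=\frac{1}{t!}\prod_{j=1}^{t-1}(1-j).
\]
For $t\ge2$ the factor with $j=1$ vanishes, so $g_{-1}(t)=0$, while $g_{-1}(1)=1$ because the product is empty. Therefore only the replacement with all commas, of type $(1,\ldots,1)$, survives, and we obtain $\zeta(s_1,\ldots,s_r)$.

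There is no real obstacle here. The only point needing care is that $\zeta^J_{(r)}$ is a formal element of $\mathfrak Z$ with coefficients in $\mathbb Q(\alpha)$, so specializing $\alpha=\pm1$ requires the coefficients to be regular there. This holds because Lemma \ref{lem:1r} exhibits them as polynomials in $\alpha$.
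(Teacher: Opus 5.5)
Your proposal is correct and follows the paper's own proof: specialize Lemma \ref{lem:1r}, observe $g_1(t)=1$ for all $t$, so the sum over all replacements gives $\zetas(s_1,\ldots,s_r)$, and $g_{-1}(t)=0$ for $t\ge2$, so only $\zeta(s_1,\ldots,s_r)$ survives. Your added remark that the coefficients are polynomials in $\alpha$, so the specialization is legitimate, and your alternative route for $\alpha=1$ via Corollary \ref{cor:alpha1} and \eqref{eqn:intro_zeta} are both valid.
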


\begin{proof}
For $\alpha=1$ we have $g_1(t)=\frac{1}{t!}\prod_{j=1}^{t-1}(1+j)=1$ for every $t$, so that the right-hand side of the first formula of Lemma \ref{lem:1r} is the sum of $\zeta(\boldsymbol{t})$ over all replacements of $\boldsymbol{s}$, which is $\zetas(s_1,\ldots,s_r)$.
For $\alpha=-1$ the factor with $j=1$ vanishes, so $g_{-1}(t)=0$ for $t\ge2$ and only the term $\boldsymbol{t}=(s_1,\ldots,s_r)$ appears.
\end{proof}

\begin{remark}
This interpolation is different from the interpolation of Yamamoto \cite{y13}, which we write as $\zeta^{c}$ with the parameter $c$ in order to avoid a conflict of notation; its coefficient of $\zeta(\boldsymbol{t})$ is $c^{r-\ell}=\prod_i c^{t_i-1}$. The two families agree exactly at the two ends, the multiple zeta function ($\alpha=-1$,
$c=0$) and the multiple zeta-star function ($\alpha=1$, $c=1$), and differ for $\alpha\ne\pm1$. In terms of the maps $\Psi_f$ of \cite{hi17}, Yamamoto's family is obtained from $\zeta(s_1,\ldots,s_r)$ by $\Psi_f$ with $f(x)=x/(1-cx)$, whereas ours is obtained with $f(x)=\sum_{i\ge1}g_\alpha(i)x^i=(1-\alpha x)^{-1/\alpha}-1$.
\end{remark}

\begin{remark}
Since $g_\alpha(t)=(-\alpha)^t\binom{-1/\alpha}{t}$, the first formula of Lemma \ref{lem:1r} can be written as
\[\frac{1}{r!}\zeta_{(r)}^J(\pmb s;\alpha)=(-\alpha)^r\omega_{-\frac1\alpha}\bigl(\zeta(s_1,\ldots,s_r)\bigr).\]
By the second formula of Lemma \ref{lem:1r}, this is exactly Theorem \ref{thm:general} for $\lambda=(r)$. Therefore, Theorem \ref{thm:general} gives an alternative proof of the first formula.
\end{remark}

We next specialize all the variables to a common value. For an integer $s\ge2$ we write $\{s\}^r$ for the tableau of shape $(r)$ all of whose entries are $s$.

\begin{prop}\label{prop:genfun}
Let $s\ge2$ be an integer and $\alpha\ne0$. For $|x|$ small enough we have
\[
\sum_{r\ge0}\frac{1}{r!}\zeta_{(r)}^J\left(\{s\}^r;\alpha\right)x^r
=\prod_{n\ge1}\left(1-\frac{\alpha x}{n^s}\right)^{-\frac1\alpha},
\]
the term $r=0$ being $1$.
\end{prop}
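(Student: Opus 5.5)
The plan is to go back to the definition and evaluate $\zeta^J_{(r)}(\{s\}^r;\alpha)$ directly as a sum over weakly increasing sequences, instead of going through Lemma \ref{lem:1r} and the combinatorics of replacements. By Definition \ref{def:jackzeta} and the weight computed in the proof of Lemma \ref{lem:1r}, a tableau $M$ of shape $(r)$ is a sequence $m_1\le\cdots\le m_r$. Such a sequence is equivalently a multiplicity function $n\mapsto k_n\ge0$ with $\sum_n k_n=r$. In these terms
\[w(M)=r!\prod_{n\ge1}g_\alpha(k_n),\qquad M^{\{s\}^r}=\prod_{n\ge1}n^{s k_n},\]
where we use $g_\alpha(0)=1$ for the values that do not occur. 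Hence
\[\frac{1}{r!}\zeta^J_{(r)}(\{s\}^r;\alpha)=\sum_{\substack{(k_n)_{n\ge1}\\ \sum k_n=r}}\prod_{n\ge1}\frac{g_\alpha(k_n)}{n^{sk_n}}.\]

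Summing over $r$ with weight $x^r$ therefore gives formally
\[\sum_{r\ge0}\frac{1}{r!}\zeta^J_{(r)}(\{s\}^r;\alpha)x^r=\prod_{n\ge1}\Bigl(\sum_{k\ge0}g_\alpha(k)\Bigl(\frac{x}{n^s}\Bigr)^k\Bigr).\]
The one-variable series is identified using $g_\alpha(k)=(-\alpha)^k\binom{-1/\alpha}{k}$, which holds for $\alpha\ne0$ and is noted in the remark above. Together with $\sum_k\binom{\beta}{k}y^k=(1+y)^\beta$, this gives $\sum_k g_\alpha(k)y^k=(1-\alpha y)^{-1/\alpha}$, and each factor becomes $(1-\alpha x/n^s)^{-1/\alpha}$.

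The main obstacle is analytic: justifying the rearrangement of the sum into the infinite product, and making sense of the product for $|x|$ small. I would first work with a truncation to values $n\le N$. There the identity is a finite product of power series convergent for $|x|<1/|\alpha|$, and the Cauchy product is legitimate. For the passage $N\to\infty$ I would bound absolutely, using $|g_\alpha(k)|\le g_{|\alpha|}(k)$, which follows termwise from $|1+\alpha j|\le 1+|\alpha|j$. The sum of absolute values is then dominated by
\[\prod_{n}(1-|\alpha||x|/n^s)^{-1/|\alpha|}.\]
This converges for $|x|<1/|\alpha|$ because $\sum_n n^{-s}<\infty$ for $s\ge2$, since $\log$ of each factor is $O(|x|n^{-s})$. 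Dominated convergence (or Fubini for absolutely convergent multiple series) then lets us exchange the sums and the limit. The same bound shows that the left-hand series converges absolutely for $|x|$ small, completing the argument.
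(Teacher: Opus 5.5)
Your proposal is correct and follows essentially the paper's proof. Both use the weight $w(M)=r!\prod_i g_\alpha(t_i)$ from the proof of Lemma \ref{lem:1r}, factor the sum over $n_1<\cdots<n_\ell$ into a product over $n$, and evaluate each factor via $g_\alpha(t)=(-\alpha)^t\binom{-1/\alpha}{t}$. The differences are small: you index tableaux directly by multiplicity functions rather than passing through the $\zeta(\boldsymbol{t})$ expansion, and your domination by $g_{|\alpha|}$ gives the absolute-convergence justification, with explicit range $|x|<1/|\alpha|$, that the paper leaves implicit in ``exchanging the order of summation.''
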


\begin{proof}
Since $g_\alpha(t)=(-\alpha)^t\binom{-1/\alpha}{t}$ for every $t\ge0$, we have
\[\sum_{t\ge0}g_\alpha(t)y^t=\sum_{t\ge0}\binom{-1/\alpha}{t}(-\alpha y)^t=(1-\alpha y)^{-\frac1\alpha}.\]
On the other hand, by Lemma \ref{lem:1r} with all the variables equal to $s$,
\begin{align*}
\frac{1}{r!}\zeta_{(r)}^J\left(\{s\}^r;\alpha\right)
&=\sum_{\ell\ge1}\ \sum_{\substack{t_1+\cdots+t_\ell=r\\ t_i\ge1}}\ \prod_{i=1}^{\ell}g_\alpha(t_i)\ \zeta(st_1,\ldots,st_\ell),\\
\zeta(st_1,\ldots,st_\ell)&=\sum_{n_1<\cdots<n_\ell}\ \prod_{i=1}^{\ell}\frac{1}{n_i^{st_i}}.
\end{align*}
Multiplying by $x^r$, summing over $r\ge0$ and exchanging the order of summation, we obtain
\[
\sum_{r\ge0}\frac{x^r}{r!}\zeta_{(r)}^J\left(\{s\}^r;\alpha\right)
=\sum_{\ell\ge0}\ \sum_{n_1<\cdots<n_\ell}\ \prod_{i=1}^{\ell}\left(\sum_{t\ge1}g_\alpha(t)\left(\frac{x}{n_i^{s}}\right)^{t}\right)
=\prod_{n\ge1}\left(\sum_{t\ge0}g_\alpha(t)\left(\frac{x}{n^{s}}\right)^{t}\right).
\]
Here, each term in the expansion of the last product corresponds to a finite set $n_1<\cdots<n_\ell$ of indices at which a factor with $t\ge1$ is taken. This is the assertion.
\end{proof}

\begin{cor}\label{cor:s2}
For $\alpha\ne0$ we have
\[
\sum_{r\ge0}\frac{1}{r!}\zeta_{(r)}^J\left(\{2\}^r;\alpha\right)x^r
=\left(\frac{\sin \pi\sqrt{\alpha x}}{\pi\sqrt{\alpha x}}\right)^{-\frac1\alpha}.
\]
In particular $\frac{1}{r!}\zeta_{(r)}^J\left(\{2\}^r;\alpha\right)$ is $\pi^{2r}$ times a polynomial in $\alpha$ with rational coefficients.
\end{cor}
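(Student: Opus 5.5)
We specialize Proposition \ref{prop:genfun} to $s=2$ and evaluate the resulting infinite product with Euler's product formula
\[
\frac{\sin \pi y}{\pi y}=\prod_{n\ge1}\left(1-\frac{y^2}{n^2}\right).
\]
Put $y=\sqrt{\alpha x}$, so that $y^2=\alpha x$; any choice of square root gives the same result, since only $y^2$ appears. Proposition \ref{prop:genfun} then gives, for $|x|$ small,
\[
\prod_{n\ge1}\left(1-\frac{\alpha x}{n^2}\right)^{-\frac1\alpha}=\left(\prod_{n\ge1}\left(1-\frac{\alpha x}{n^2}\right)\right)^{-\frac1\alpha}.
\]
The only point needing care is pulling the real power $-\frac1\alpha$ out of the infinite product. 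For $|x|$ small every factor lies in a disc around $1$ on which the principal branch is used. We also have $\sum_n \log(1-\alpha x/n^2)$ absolutely convergent, so
\[
\prod_n \exp\bigl(-\tfrac1\alpha\log(1-\alpha x/n^2)\bigr)=\exp\Bigl(-\tfrac1\alpha\sum_n\log(1-\alpha x/n^2)\Bigr).
\]
The inner sum is the principal logarithm of $\frac{\sin\pi y}{\pi y}$, because both are analytic near $x=0$ and vanish there. This yields the displayed identity as an identity of analytic functions near $x=0$, hence also of power series.

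For the second assertion, expand
\[
\frac{\sin\pi y}{\pi y}=\sum_{k\ge0}\frac{(-1)^k\pi^{2k}(\alpha x)^k}{(2k+1)!}=1+u,
\]
where $u$ is a power series in $\pi^2\alpha x$ with rational coefficients and no constant term. Then
\[
(1+u)^{-1/\alpha}=\sum_{m\ge0}\binom{-1/\alpha}{m}u^m.
\]
Since $u^m$ is divisible by $(\pi^2\alpha x)^m$, only $m\le r$ contributes to the coefficient of $x^r$. Each such $m$ contributes $\binom{-1/\alpha}{m}$ times $\alpha^r\pi^{2r}$ times a rational number.

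Finally, $\binom{-1/\alpha}{m}\alpha^m=(-1)^m\prod_{j=0}^{m-1}(1+j\alpha)/m!$ is a polynomial in $\alpha$. Because $m\le r$, the leftover factor $\alpha^{r-m}$ is also a polynomial. Hence the coefficient of $x^r$ is $\pi^{2r}$ times a polynomial in $\alpha$ over $\mathbb Q$. This is consistent with Lemma \ref{lem:1r}, where each $g_\alpha(t)$ is already polynomial in $\alpha$.
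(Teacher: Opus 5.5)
Your proposal is correct and follows the paper's proof: specialize Proposition~\ref{prop:genfun} to $s=2$ and substitute $z^2=\alpha x$ into Euler's product $\prod_{n\ge1}(1-z^2/n^2)=\frac{\sin\pi z}{\pi z}$. The paper leaves two points implicit, namely the branch bookkeeping for the power $-\frac1\alpha$ and the binomial-series argument for polynomiality in $\alpha$, and you handle both correctly; note that identifying $\sum_n\log(1-\alpha x/n^2)$ with the principal logarithm also uses that the two have the same exponential, which your product computation already provides.
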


\begin{proof}
Put $z=\sqrt{\alpha x}$ in the Euler product $\prod_{n\ge1}\left(1-\frac{z^2}{n^2}\right)=\frac{\sin\pi z}{\pi z}$ and apply Proposition \ref{prop:genfun}.
\end{proof}

\begin{exam}
The first values are
\begin{align*}
\frac{1}{1!}\zeta_{(1)}^J\left(\{2\}^1;\alpha\right)&=\frac{\pi^2}{6}, \\
\frac{1}{2!}\zeta_{(2)}^J\left(\{2\}^2;\alpha\right)&=\frac{(2\alpha+5)\pi^4}{360},\\
\frac{1}{3!}\zeta_{(3)}^J\left(\{2\}^3;\alpha\right)&=\frac{(16\alpha^2+42\alpha+35)\pi^6}{45360}, \\
\frac{1}{4!}\zeta_{(4)}^J\left(\{2\}^4;\alpha\right)&=\frac{(4\alpha+5)(36\alpha^2+56\alpha+35)\pi^8}{5443200}.
\end{align*}
At $\alpha=-1$ these values are $\zeta(\{2\}^r)=\frac{\pi^{2r}}{(2r+1)!}$, and at $\alpha=1$ they are $\zetas(\{2\}^r)=2\left(1-2^{1-2r}\right)\zeta(2r)$, namely
\[\frac{\pi^2}{6},\quad\frac{7\pi^4}{360},\quad\frac{31\pi^6}{15120},\quad\frac{127\pi^8}{604800}.\]
\end{exam}

\subsection{Values at $\{2\}^a,3,\{2\}^b$}

In this subsection, we show a reflection formula for the values at $\{2\}^a,3,\{2\}^b$, which reduces to a relation of Zagier at $\alpha=1$. Following Zagier \cite{z12}, we insert one $3$ among the $2$'s. For integers $a,b\ge0$ we write $\{2\}^a,3,\{2\}^b$ for the tableau of shape $(a+b+1)$ whose entries are $2,\ldots,2,3,2,\ldots,2$ with $a$ entries $2$ before the $3$, and put
\[V_{a,b}(\alpha)=\frac{1}{(a+b+1)!}\zeta_{(a+b+1)}^J\left(\{2\}^a,3,\{2\}^b;\alpha\right).\]
By Lemma \ref{lem:1r}, $V_{a,b}(\alpha)$ is a polynomial in $\alpha$ whose coefficients are $\mathbb{Q}$-linear combinations of multiple zeta values of weight $w=2a+2b+3$, and $V_{a,b}(-1)=\zeta(\{2\}^a,3,\{2\}^b)$ by Corollary \ref{cor:interp}.
For odd $w$ we denote by $\mathcal{M}_w$ the $\mathbb{Q}$-vector space spanned by the multiple zeta values of weight $w$, and by
\[\mathcal{Z}_w=\mathbb{Q}\zeta(w)+\mathbb{Q}\pi^2\zeta(w-2)+\cdots+\mathbb{Q}\pi^{w-3}\zeta(3)\subset\mathcal{M}_w.\]
Zagier \cite{z12} gave explicit evaluations of $\zeta(\{2\}^a,3,\{2\}^b)$ and $\zetas(\{2\}^a,3,\{2\}^b)$ \cite[Theorems~1 and 4]{z12}, which show that both of them lie in $\mathcal{Z}_{2a+2b+3}$ for all $a,b\ge0$. We refer to this as Zagier's theorem.

For $\alpha\ne0$ and $m\ge1$ put $Q_\alpha(x,m)=\left(1-\frac{\alpha x^2}{m^2}\right)^{-\frac1\alpha}$ and $P_\alpha(x)=\prod_{m\ge1}Q_\alpha(x,m)$. By Corollary \ref{cor:s2},
\[P_\alpha(x)=\sum_{r\ge0}c_r(\alpha)x^{2r},\qquad c_r(\alpha)=\frac{1}{r!}\zeta_{(r)}^J\left(\{2\}^r;\alpha\right)\in\mathbb{Q}[\alpha]\pi^{2r}.\]

\begin{prop}\label{prop:zaggen}
We have
\begin{align*}
F_\alpha(x,y)&:=\sum_{a,b\ge0}V_{a,b}(\alpha)x^{2a}y^{2b}\\
&\phantom{:}=\sum_{n\ge1}\left(\prod_{m<n}Q_\alpha(x,m)\right)\frac{Q_\alpha(x,n)-Q_\alpha(y,n)}{n\,(x^2-y^2)}\left(\prod_{m>n}Q_\alpha(y,m)\right).
\end{align*}
\end{prop}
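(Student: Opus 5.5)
We expand $V_{a,b}(\alpha)$ by Lemma \ref{lem:1r} and then resum, exactly as in the proof of Proposition \ref{prop:genfun}.

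With $r=a+b+1$ and the variables $(2,\ldots,2,3,2,\ldots,2)$, Lemma \ref{lem:1r} writes $V_{a,b}(\alpha)$ as a sum over replacements $\boldsymbol t$ with weights $\prod_i g_\alpha(t_i)$. Unfolding each $\zeta(\boldsymbol t)$ as a sum over $n_1<\cdots<n_\ell$ shows that $V_{a,b}(\alpha)$ is a sum over weakly increasing sequences $m_1\le\cdots\le m_r$. Each maximal block of length $t$ with common value $m$ carries the weight $g_\alpha(t)$ and the factor $m^{-(\text{sum of the entries in the block})}$.

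Exactly one block contains the position $a+1$ of the entry $3$; let $n$ denote its common value. Then:
\begin{itemize}
\item the entries before this block are $2$'s with values $m<n$;
\item the entries after it are $2$'s with values $m>n$;
\item the block itself consists of $p\ge0$ twos before the $3$ and $q\ge0$ twos after it, with total exponent $2(p+q)+3$ and weight $g_\alpha(p+q+1)$.
\end{itemize}
Multiply by $x^{2a}y^{2b}$ and sum over $a,b\ge0$. For fixed $n$ the three parts decouple, since $a$ splits as (number of $2$'s before the block)$+p$ and $b$ splits as $q+$(number after).

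We then compute the three factors.
\begin{itemize}
\item The part before the block is a free choice, for each $m<n$, of a block length $t\ge0$ with weight $g_\alpha(t)(x^2/m^2)^t$. Since $\sum_{t\ge0}g_\alpha(t)y^t=(1-\alpha y)^{-1/\alpha}$ (as in Proposition \ref{prop:genfun}), this gives $\prod_{m<n}Q_\alpha(x,m)$.
\item Likewise the part after the block gives $\prod_{m>n}Q_\alpha(y,m)$.
\item The middle factor is
\[
\sum_{p,q\ge0}g_\alpha(p+q+1)\frac{x^{2p}y^{2q}}{n^{2(p+q)+3}}
=\frac1{n^3}\sum_{k\ge0}g_\alpha(k+1)\frac{x^{2(k+1)}-y^{2(k+1)}}{(x^2-y^2)\,n^{2k}},
\]
using $\sum_{p+q=k}x^{2p}y^{2q}=(x^{2k+2}-y^{2k+2})/(x^2-y^2)$. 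Reindexing by $t=k+1$ turns this into
\[
\frac{1}{n(x^2-y^2)}\sum_{t\ge1}g_\alpha(t)\Bigl(\frac{x^{2t}}{n^{2t}}-\frac{y^{2t}}{n^{2t}}\Bigr)=\frac{Q_\alpha(x,n)-Q_\alpha(y,n)}{n(x^2-y^2)}.
\]
The $t=0$ terms cancel in the difference, so nothing is lost by starting at $t=1$.
\end{itemize}
Summing over $n\ge1$ gives the stated formula.

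The main care point is the bookkeeping of the decomposition: the map from (sequence, block structure) to the triple (before-part, middle block, after-part) must be a bijection, with the strict inequalities $m<n<m'$ between block values matching the strictness of the indices in $\zeta(\boldsymbol t)$. The other point is justifying the rearrangements. For $|x|,|y|$ small all series converge absolutely: the middle terms are $O(n^{-3})$, and the infinite products converge as in Proposition \ref{prop:genfun}. So the identity holds as an equality of analytic functions near $0$, hence of power series.
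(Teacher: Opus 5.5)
Your proposal is correct and follows essentially the same route as the paper: expand $V_{a,b}(\alpha)$ by Lemma \ref{lem:1r}, unfold each $\zeta(\boldsymbol t)$ as a sum over $n_1<\cdots<n_\ell$, isolate the block containing the $3$, and resum the blocks before and after it into $\prod_{m<n}Q_\alpha(x,m)$ and $\prod_{m>n}Q_\alpha(y,m)$. Describing the middle block by $(p,q)$ rather than by its size and the position of the $3$ is only a reindexing, and your absolute-convergence remark for small $|x|,|y|$ justifies the rearrangements that the paper leaves implicit.
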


\begin{proof}
As in the proof of Proposition \ref{prop:genfun}, we expand $V_{a,b}(\alpha)$ by Lemma \ref{lem:1r} and write each multiple zeta value as a sum over $n_1<\cdots<n_\ell$.
We attach $x^2$ to each variable before the $3$ and $y^2$ to each variable after it. A block of size $t$ lying before the block containing the $3$ contributes $g_\alpha(t)x^{2t}n_i^{-2t}$, and one lying after it contributes $g_\alpha(t)y^{2t}n_i^{-2t}$; summing over $t\ge0$ gives $Q_\alpha(x,n_i)$ and $Q_\alpha(y,n_i)$ respectively.
If the block containing the $3$ has size $t$ and index $n$, and the $3$ is its $p$-th entry, it contributes $g_\alpha(t)x^{2(p-1)}y^{2(t-p)}n^{-(2t+1)}$. Since $\sum_{p=1}^{t}x^{2(p-1)}y^{2(t-p)}=\frac{x^{2t}-y^{2t}}{x^2-y^2}$, summing over $p$ and $t\ge1$ gives $\frac{Q_\alpha(x,n)-Q_\alpha(y,n)}{n\,(x^2-y^2)}$.
\end{proof}

\begin{theorem}\label{thm:refl}
We have $F_\alpha(y,x)=P_\alpha(x)P_\alpha(y)F_{-\alpha}(ix,iy)$, that is,
\[
V_{b,a}(\alpha)=\sum_{a'=0}^{a}\sum_{b'=0}^{b}(-1)^{a'+b'}c_{a-a'}(\alpha)c_{b-b'}(\alpha)V_{a',b'}(-\alpha)
\qquad(a,b\ge0).
\]
\end{theorem}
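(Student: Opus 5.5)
The plan is to prove the generating-function identity $F_\alpha(y,x)=P_\alpha(x)P_\alpha(y)F_{-\alpha}(ix,iy)$ directly from the product formula of Proposition \ref{prop:zaggen}. The coefficient identity then follows by comparing coefficients of $x^{2a}y^{2b}$. On the left, $F_\alpha(y,x)$ has coefficient $V_{b,a}(\alpha)$. On the right, $F_{-\alpha}(ix,iy)=\sum V_{a',b'}(-\alpha)(-1)^{a'+b'}x^{2a'}y^{2b'}$, and $P_\alpha(x)P_\alpha(y)=\sum c_r(\alpha)c_{r'}(\alpha)x^{2r}y^{2r'}$, which gives exactly the stated double sum.

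The key observation is the reflection identity
\[Q_{-\alpha}(ix,m)=\Bigl(1-\frac{\alpha x^2}{m^2}\Bigr)^{\frac1\alpha}=Q_\alpha(x,m)^{-1}.\]
Substituting into Proposition \ref{prop:zaggen} with $-\alpha$ and $(ix,iy)$ gives
\[F_{-\alpha}(ix,iy)=\sum_{n\ge1}\Bigl(\prod_{m<n}Q_\alpha(x,m)^{-1}\Bigr)\frac{Q_\alpha(x,n)^{-1}-Q_\alpha(y,n)^{-1}}{-n(x^2-y^2)}\Bigl(\prod_{m>n}Q_\alpha(y,m)^{-1}\Bigr).\]
Multiplying by $P_\alpha(x)P_\alpha(y)=\prod_m Q_\alpha(x,m)\prod_m Q_\alpha(y,m)$, the $n$-th term becomes
\[\Bigl(\prod_{m\ge n}Q_\alpha(x,m)\Bigr)\Bigl(\prod_{m\le n}Q_\alpha(y,m)\Bigr)\cdot\frac{Q_\alpha(x,n)^{-1}-Q_\alpha(y,n)^{-1}}{-n(x^2-y^2)}.\]
Distributing the middle fraction, the two factors $Q_\alpha(x,n)Q_\alpha(y,n)$ turn it into $\frac{Q_\alpha(x,n)-Q_\alpha(y,n)}{n(y^2-x^2)}$. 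What remains is
\[\Bigl(\prod_{m<n}Q_\alpha(y,m)\Bigr)\frac{Q_\alpha(y,n)-Q_\alpha(x,n)}{n(y^2-x^2)}\Bigl(\prod_{m>n}Q_\alpha(x,m)\Bigr),\]
which is exactly the $n$-th term of $F_\alpha(y,x)$. Summing over $n$ completes the argument.

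The main obstacle is justifying these manipulations as identities of analytic functions or formal power series in $x,y$. Products like $\prod_{m\ge n}Q_\alpha(x,m)$ converge absolutely for small $|x|$ because $\log Q_\alpha(x,m)=O(x^2/m^2)$ uniformly. The sum over $n$ also converges absolutely near the origin, since the $n$-th term is $O(n^{-3})$ uniformly in small $x,y$. The apparent singularity at $x^2=y^2$ is removable, since each difference quotient is a power series in $x^2,y^2$. Since $V_{a,b}$ is defined by an absolutely convergent series, the generating function of Proposition \ref{prop:zaggen} is an honest convergent power series. Coefficient comparison is therefore legitimate, and everything can alternatively be phrased in $\mathbb{Q}(\alpha)$-formal power series, the infinite products converging $x$-adically term by term as in the proof of Proposition \ref{prop:genfun}.
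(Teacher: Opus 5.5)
Your proposal is correct and follows the paper's proof essentially step for step: the reflection $Q_{-\alpha}(ix,m)=Q_\alpha(x,m)^{-1}$, absorbing $P_\alpha(x)P_\alpha(y)$ into the complementary tails of the products, and then comparing coefficients of $x^{2a}y^{2b}$. One small slip: after multiplying by $Q_\alpha(x,n)Q_\alpha(y,n)$, the middle fraction is $\frac{Q_\alpha(y,n)-Q_\alpha(x,n)}{n(y^2-x^2)}$, not $\frac{Q_\alpha(x,n)-Q_\alpha(y,n)}{n(y^2-x^2)}$; your next display already has the correct sign, so the argument is unaffected.
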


\begin{proof}
Since $Q_{-\alpha}(ix,m)=\left(1-\frac{\alpha x^2}{m^2}\right)^{\frac1\alpha}=Q_\alpha(x,m)^{-1}$, we write $Q_x(m)=Q_\alpha(x,m)$ and $Q_y(m)=Q_\alpha(y,m)$ and obtain from Proposition \ref{prop:zaggen}
\[
F_{-\alpha}(ix,iy)=\sum_{n\ge1}\left(\prod_{m<n}Q_x(m)^{-1}\right)\frac{Q_x(n)^{-1}-Q_y(n)^{-1}}{-n\,(x^2-y^2)}\left(\prod_{m>n}Q_y(m)^{-1}\right).
\]
Here $\frac{Q_x(n)^{-1}-Q_y(n)^{-1}}{-(x^2-y^2)}=\frac{Q_x(n)-Q_y(n)}{(x^2-y^2)Q_x(n)Q_y(n)}$, and
\[\prod_{m\le n}Q_x(m)^{-1}=P_\alpha(x)^{-1}\prod_{m>n}Q_x(m),\qquad
\prod_{m\ge n}Q_y(m)^{-1}=P_\alpha(y)^{-1}\prod_{m<n}Q_y(m).\]
Hence
\[
P_\alpha(x)P_\alpha(y)F_{-\alpha}(ix,iy)=\sum_{n\ge1}\left(\prod_{m<n}Q_y(m)\right)\frac{Q_y(n)-Q_x(n)}{n\,(y^2-x^2)}\left(\prod_{m>n}Q_x(m)\right)=F_\alpha(y,x).
\]
Comparing the coefficients of $x^{2a}y^{2b}$ gives the second form.
\end{proof}

We put $\alpha=1$ in Theorem \ref{thm:refl}. Since $V_{a,b}(1)=\zetas(\{2\}^a,3,\{2\}^b)$, $V_{a,b}(-1)=\zeta(\{2\}^a,3,\{2\}^b)$ and $c_r(1)=\zetas(\{2\}^r)$ by Corollary \ref{cor:interp}, we obtain for $a,b\ge0$
\[\zetas(\{2\}^b,3,\{2\}^a)=\sum_{a'=0}^{a}\sum_{b'=0}^{b}(-1)^{a'+b'}\zetas(\{2\}^{a-a'})\zetas(\{2\}^{b-b'})\zeta(\{2\}^{a'},3,\{2\}^{b'}).\]
This is the relation between the generating functions of these values in \cite[Section~7]{z12}, from which Zagier deduced the evaluation of $\zetas(\{2\}^a,3,\{2\}^b)$ from that of $\zeta(\{2\}^a,3,\{2\}^b)$. Thus Theorem \ref{thm:refl} interpolates Zagier's relation by the parameter $\alpha$.

\begin{cor}\label{cor:refl}
\begin{enumerate}
\item Let $\overline{V_{a,b}}(\alpha)\in(\mathcal{M}_w/\mathcal{Z}_w)[\alpha]$ denote the image of
$V_{a,b}(\alpha)$. Then $\overline{V_{a,b}}(\alpha)$ is divisible by $1-\alpha^2$.
\item Modulo $\zeta(2)\mathcal{M}_{w-2}$ we have
$V_{b,a}(\alpha)\equiv(-1)^{a+b}V_{a,b}(-\alpha)$.
\end{enumerate}
\end{cor}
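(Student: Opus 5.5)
Both parts will be read off from Theorem \ref{thm:refl}, in the form
\[
V_{b,a}(\alpha)=\sum_{a'=0}^{a}\sum_{b'=0}^{b}(-1)^{a'+b'}c_{a-a'}(\alpha)c_{b-b'}(\alpha)V_{a',b'}(-\alpha),
\]
combined with two facts: $c_r(\alpha)\in\mathbb{Q}[\alpha]\pi^{2r}$ with $c_0=1$ (Corollary \ref{cor:s2}), and Zagier's theorem at $\alpha=\pm1$.

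For (2), work modulo $\zeta(2)\mathcal{M}_{w-2}$. Since $\pi^2=6\zeta(2)$, every term with $a'<a$ or $b'<b$ contains a factor $\pi^{2}$ times an element of $\mathbb{Q}[\alpha]\otimes\mathcal{M}$ of weight $w-2$. Such a term therefore lies in $\zeta(2)\mathcal{M}_{w-2}[\alpha]$. I will use that $\mathcal{M}$ is a graded algebra, so that $\pi^{2k}\mathcal{M}_{w-2k}\subset\zeta(2)\mathcal{M}_{w-2}$. Only the term $a'=a,\ b'=b$ survives, which gives $V_{b,a}(\alpha)\equiv(-1)^{a+b}V_{a,b}(-\alpha)$.

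For (1), it suffices to show that $\overline{V_{a,b}}(1)=0$ and $\overline{V_{a,b}}(-1)=0$ in $\mathcal{M}_w/\mathcal{Z}_w$. The coefficients of $V_{a,b}(\alpha)$ lie in $\mathcal{M}_w$, a vector space over $\mathbb{Q}$, so the quotient is a polynomial with coefficients in a $\mathbb{Q}$-vector space. For such a polynomial, vanishing at the two distinct rational points $\pm1$ gives divisibility by $(1-\alpha)(1+\alpha)$. This is the factor theorem applied coordinatewise, after choosing a basis or just doing Euclidean division by the monic polynomial $\alpha^2-1$ with rational coefficients, where the remainder $u+v\alpha$ vanishes at $\pm1$.

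The two evaluations come from Corollary \ref{cor:interp}: $V_{a,b}(1)=\zetas(\{2\}^a,3,\{2\}^b)$ and $V_{a,b}(-1)=\zeta(\{2\}^a,3,\{2\}^b)$. Both lie in $\mathcal{Z}_w$ by Zagier's theorem.

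The only point needing care is the remark on polynomials over a $\mathbb{Q}$-vector space. One could also try to derive (1) from Theorem \ref{thm:refl} alone, but I expect that to fail without Zagier's evaluation at one endpoint. So I would state explicitly that both endpoints use Zagier's theorem, and that Theorem \ref{thm:refl} is needed only for (2).
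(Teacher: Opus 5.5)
Your proposal is correct and follows the paper's proof: part (1) evaluates $V_{a,b}$ at $\alpha=\pm1$ via Corollary \ref{cor:interp} and Zagier's theorem, and part (2) keeps only the $(a',b')=(a,b)$ term of Theorem \ref{thm:refl}, using $c_r(\alpha)\in\mathbb{Q}[\alpha]\pi^{2r}$. The paper leaves two points implicit that you spell out correctly: Euclidean division by the monic polynomial $\alpha^2-1$ over a $\mathbb{Q}$-vector space, and the weight grading $\pi^{2k}\mathcal{M}_{w-2k}\subset\zeta(2)\mathcal{M}_{w-2}$.
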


\begin{proof}
(1) By Corollary \ref{cor:interp}, $V_{a,b}(-1)=\zeta(\{2\}^a,3,\{2\}^b)$ and $V_{a,b}(1)=\zetas(\{2\}^a,3,\{2\}^b)$, which lie in $\mathcal{Z}_w$ by Zagier's theorem. Therefore, $\overline{V_{a,b}}(\alpha)$ vanishes at $\alpha=-1$ and at $\alpha=1$.
(2) Since $c_r(\alpha)\in\mathbb{Q}[\alpha]\pi^{2r}$, all the terms with $(a',b')\ne(a,b)$ in Theorem \ref{thm:refl} lie in $\zeta(2)\mathcal{M}_{w-2}$, while the remaining term is $(-1)^{a+b}V_{a,b}(-\alpha)$.
\end{proof}

For $w=11$, that is, for all $(a,b)$ with $a+b=4$, we found numerically that
\[V_{a,b}(\alpha)\in\mathcal{Z}_{11}[\alpha]+\frac{(-1)^a}{3}\binom{4}{a}(1-\alpha^2)\zeta(3,3,5).\]
This leads to the following problem.

\begin{prob}
Determine the quotient $\overline{V_{a,b}}(\alpha)/(1-\alpha^2)$ for $w\ge13$. In particular, is it always independent of $\alpha$?
\end{prob}

\appendix
\section{The shape $(r,1)$ and a telescoping identity}\label{app:r1}

In this appendix, we give a direct proof of Theorem \ref{thm:general} for $\lambda=(r,1)$ with $r\ge2$. In this case, we have $\lambda'=(2,1^{r-1})$ and $n=r+1$, and we can write both sides of Theorem \ref{thm:general} explicitly. Then the duality follows from a telescoping identity, which shows the cancellation behind Lemma \ref{lem:local} in the simplest nontrivial case.

For an integer $v\ge0$ we put
\[\varphi(v)=\frac{v}{1+\alpha(v-1)},\qquad
\gamma(v)=\frac{(1+\alpha r)\bigl(2+\alpha(v-1)\bigr)}{1+\alpha v},\]
so that $\varphi(0)=0$. For a composition $\boldsymbol{t}=(t_1,\ldots,t_\ell)$ of $r$ we write $\tau_k=t_1+\cdots+t_k$ $(0\le k\le\ell)$ and $\sigma_i=s_{1(\tau_{i-1}+1)}+\cdots+s_{1\tau_i}$ $(1\le i\le\ell)$, and we consider the indices
\begin{align*}
\pmb u^{\rm I}_k(\boldsymbol{t})&=(\sigma_1,\ldots,\sigma_{k-1},\ \sigma_k+s_{21},\ \sigma_{k+1},\ldots,\sigma_\ell)&&(2\le k\le\ell),\\
\pmb u^{\rm II}_k(\boldsymbol{t})&=(\sigma_1,\ldots,\sigma_{k},\ s_{21},\ \sigma_{k+1},\ldots,\sigma_\ell)&&(1\le k\le\ell).
\end{align*}
Here, $g_\alpha$ is the function defined in Section \ref{sec:omega}.

\begin{prop}\label{prop:r1explicit}
\begin{enumerate}
\item We have
\begin{align*}
\zeta_{(r,1)}^J(\pmb s;\alpha)
=(r-1)!\sum_{\boldsymbol{t}}\ \prod_{i=1}^{\ell}g_\alpha(t_i)
\Biggl(&\sum_{k=2}^{\ell}\gamma(\tau_k)\varphi(\tau_{k-1})\zeta\bigl(\pmb u^{\rm I}_k(\boldsymbol{t})\bigr)\\
&+\sum_{k=1}^{\ell}\gamma(\tau_k)\varphi(\tau_{k})\zeta\bigl(\pmb u^{\rm II}_k(\boldsymbol{t})\bigr)\Biggr),
\end{align*}
where $\boldsymbol{t}=(t_1,\ldots,t_\ell)$ runs over all compositions of $r$.
\item We have
\begin{align*}
\zeta_{(2,1^{r-1})}^J\left(\pmb s';\frac1\alpha\right)
&=(r-1)!\sum_{v=1}^{r}\gamma(v)\varphi(v)\zeta(s_{11},\ldots,s_{1v},s_{21},s_{1(v+1)},\ldots,s_{1r})\\
&\qquad+(r-1)!\,\frac{1+\alpha r}{\alpha}\sum_{v=1}^{r}\zeta(s_{11},\ldots,s_{1(v-1)},s_{1v}+s_{21},s_{1(v+1)},\ldots,s_{1r}).
\end{align*}
\end{enumerate}
\end{prop}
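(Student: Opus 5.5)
The plan is to enumerate all chains in each shape explicitly and compute their weights $w(T;\alpha)$ as products of the step factors $\psi$. We then group the chains according to the index $\pmb t_T$ they produce. By the proof of \eqref{eqn:rhscoeff}, $\zeta^J_{\lambda}(\pmb s;\alpha)=\sum_T w(T;\alpha)\zeta(\pmb t_T)$ for $\mu=\emptyset$. So both parts reduce to evaluating $w(T;\alpha)$ for every chain.

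For (1), a chain from $\emptyset$ to $(r,1)$ is determined by two data: the composition $\boldsymbol{t}=(t_1,\ldots,t_\ell)$ of $r$ recording how many first-row boxes are added at each step, and the step at which $(2,1)$ is added. Since $(1,1)$ must already be present, $(2,1)$ is added in one of two ways.
\begin{itemize}
\item Type I: together with the $k$-th first-row block, for some $k\ge2$. The step is $(\tau_{k-1})\subset(\tau_k,1)$, which is a horizontal strip because $\tau_{k-1}\ge1$.
\item Type II: alone, right after the $k$-th block. The step is $(\tau_k)\subset(\tau_k,1)$.
\end{itemize}
These two cases give exactly $\pmb u^{\rm I}_k(\boldsymbol{t})$ and $\pmb u^{\rm II}_k(\boldsymbol{t})$.

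Before $(2,1)$ enters, the factors are the one-row factors $\binom{k}{t}\prod_{j<t}(1+\alpha j)$ computed in the proof of Lemma \ref{lem:1r}. Next we compute $\psi$ for three kinds of steps: $(a)\subset(b,1)$, $(b)\subset(b,1)$, and $(a,1)\subset(b,1)$. Only the boxes $(1,1)$ and $(2,1)$ in column $1$ behave differently from the one-row case, so each such factor equals the corresponding one-row factor times a simple correction ratio. For $(a,1)\subset(b,1)$ this ratio should be $\frac{(1+\alpha b)(2+\alpha(a-1))}{(1+\alpha a)(2+\alpha(b-1))}$ or similar. These ratios telescope along the remaining steps, leaving $\gamma(\tau_k)$ together with a factor depending on the shape just before $(2,1)$ enters. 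That factor is $\varphi(\tau_{k-1})$ in Type I and $\varphi(\tau_k)$ in Type II. The one-row factors assemble into $r!\prod_i g_\alpha(t_i)$. Finally we absorb the global normalization into the prefactor $(r-1)!$; the leftover factor $1/r$ should be hidden in the definition of $\varphi$.

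For (2), we use $\lambda'=(2,1^{r-1})$ at parameter $1/\alpha$. The box $(i,1)$ carries $s_{1i}$ and the box $(1,2)$ carries $s_{21}$. A horizontal strip adds at most one box to column $1$, so column $1$ grows one box per step. The box $(1,2)$ enters in one of two ways.
\begin{itemize}
\item Alone, after the column reaches length $v\ge1$. This gives $\zeta(s_{11},\ldots,s_{1v},s_{21},\ldots)$.
\item Together with the box $(v,1)$. This is a horizontal strip for every $v$, including $v=1$, where it is the step $\emptyset\subset(2)$. It gives the merged index with $s_{1v}+s_{21}$.
\end{itemize}
We then compute the factors for $(1^{a})\subset(1^{a+1})$, for $(1^a)\subset(2,1^{a-1})$, for $(1^{v-1})\subset(2,1^{v-1})$, and for $(2,1^{a-1})\subset(2,1^{a})$, all with $\alpha$ replaced by $1/\alpha$. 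Again we multiply them along the chain and telescope.

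The main obstacle is the telescoping in (2). The merged chains must have the weight $(r-1)!(1+\alpha r)/\alpha$ independent of $v$, and the separated chains must have the weight $(r-1)!\gamma(v)\varphi(v)$. To check this, I would first write each factor of $(2,1^{a-1})\subset(2,1^a)$ as a ratio $R(a+1)/R(a)$ times the factor of $(1^a)\subset(1^{a+1})$. I would then verify the two boundary factors directly by listing $B$ and $C$ box by box, as in the table of Example \ref{ex:intro21}. This table also serves as a check at $r=2$.
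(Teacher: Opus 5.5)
Your route is the paper's: classify the chains by the composition $\boldsymbol{t}$ and by the step at which $(2,1)$ (resp.\ $(1,2)$) enters, compute $\psi$ for each kind of step, and multiply. But for a statement like this the computation is the proof, and the one factor you actually commit to is wrong.

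For $(a,1)\subset(b,1)$ with $1\le a<b$, your own remark that only column $1$ differs from the one-row case gives:
\begin{itemize}
\item at $(1,1)$, $B=1+\alpha b$ and $C=1+\alpha a$ (instead of $\alpha b$ and $\alpha a$);
\item at $(2,1)$, $B=C=\alpha$.
\end{itemize}
So the correction ratio is $\frac{(1+\alpha b)\,a}{(1+\alpha a)\,b}$, not $\frac{(1+\alpha b)(2+\alpha(a-1))}{(1+\alpha a)(2+\alpha(b-1))}$. Your proposed check at $r=2$ would catch this: it must return $\psi_{(2,1)/(1,1)}=\frac{2\alpha+1}{\alpha+1}$, whereas your ratio gives $\frac{4(2\alpha+1)}{(\alpha+1)(\alpha+2)}$.

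As a result, your account of where $\gamma$ and $\varphi$ come from is wrong.
\begin{itemize}
\item The steps after $(2,1)$ enters telescope to $\frac{1+\alpha r}{1+\alpha\tau_k}\cdot\frac{\tau_k}{r}$, not to $\gamma(\tau_k)$.
\item The factor $2+\alpha(\tau_k-1)$ of $\gamma(\tau_k)$ and the denominator of $\varphi$ come from the entering step itself. In type I the correction ratio of $(\tau_{k-1})\subset(\tau_k,1)$ is $\frac{2+\alpha(\tau_k-1)}{1+\alpha(\tau_{k-1}-1)}\cdot\frac{\tau_{k-1}}{\tau_k}$, coming from $B$ and $C$ at $(1,1)$. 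In type II the whole factor is $\psi_{(\tau_k,1)/(\tau_k)}=\frac{2+\alpha(\tau_k-1)}{1+\alpha(\tau_k-1)}$.
\item The $1/r$ turns $r!$ into $(r-1)!$; it is not hidden in $\varphi$.
\item The numerator of $\varphi$ is the $\tau_k$ from the telescoping in type II. In type I it is $\tau_{k-1}$, after that $\tau_k$ cancels.
\end{itemize}

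For (2), the telescoping you flag as the main obstacle is actually trivial. What you need are the four factors, read off box by box:
\begin{itemize}
\item $\psi_{(1^m)/(1^{m-1})}(\beta)=m$;
\item $\psi_{(2,1^m)/(2,1^{m-1})}(\beta)=\frac{m(\beta+m+1)}{\beta+m}$;
\item $\psi_{(2,1^{v-1})/(1^v)}(\beta)=\frac{2\beta+v-1}{\beta+v-1}$;
\item $\psi_{(2,1^{v-1})/(1^{v-1})}(\beta)=\beta+v$.
\end{itemize}
Then $\prod_{m=v}^{r-1}\frac{\beta+m+1}{\beta+m}=\frac{\beta+r}{\beta+v}$ gives the weights $(r-1)!\frac{v(2\beta+v-1)(\beta+r)}{(\beta+v-1)(\beta+v)}$ and $(r-1)!(\beta+r)$. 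At $\beta=1/\alpha$ these become $(r-1)!\gamma(v)\varphi(v)$ and $(r-1)!\frac{1+\alpha r}{\alpha}$.

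So the plan is sound, but it asserts the decisive factors rather than computing them, and the one it guesses would not produce the stated weights.
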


\begin{proof}
We first consider (1). In a chain from $\emptyset$ to $(r,1)$, the first row grows by blocks of sizes $t_1,\ldots,t_\ell$. Since the first block contains $(1,1)$, the box $(2,1)$ cannot be added together with the first block. Therefore, the box $(2,1)$ is added either together with the $k$-th block for some $k\ge2$, or alone right after the $k$-th block.
The former gives the index $\pmb u^{\rm I}_k(\boldsymbol{t})$ and the latter gives $\pmb u^{\rm II}_k(\boldsymbol{t})$. As in the proof of Lemma \ref{lem:1r}, for $m,t\ge1$, we have
\begin{align*}
\psi_{(m+t)/(m)}(\alpha)&=\binom{m+t}{t}\prod_{j=0}^{t-1}(1+\alpha j),\\
\psi_{(m+t,1)/(m,1)}(\alpha)&=\binom{m+t}{t}\prod_{j=0}^{t-1}(1+\alpha j)\cdot\frac{1+\alpha(m+t)}{1+\alpha m}\cdot\frac{m}{m+t},\\
\psi_{(m+t,1)/(m)}(\alpha)&=\binom{m+t}{t}\prod_{j=0}^{t-1}(1+\alpha j)\cdot\frac{2+\alpha(m+t-1)}{1+\alpha(m-1)}\cdot\frac{m}{m+t},\\
\psi_{(m,1)/(m)}(\alpha)&=\frac{2+\alpha(m-1)}{1+\alpha(m-1)},
\end{align*}
and the first formula also holds for $m=0$. Taking the product over the steps of a chain, the binomial coefficients and the products over $j$ give $r!\prod_{i}g_\alpha(t_i)$, and the steps after the one in which $(2,1)$ is added give
\[\prod_{i>k}\frac{1+\alpha \tau_i}{1+\alpha \tau_{i-1}}\cdot\frac{\tau_{i-1}}{\tau_i}=\frac{1+\alpha r}{1+\alpha \tau_k}\cdot\frac{\tau_k}{r}.\]
Moreover, the step in which $(2,1)$ is added gives the additional factor $\frac{2+\alpha(\tau_k-1)}{1+\alpha(\tau_{k-1}-1)}\cdot\frac{\tau_{k-1}}{\tau_k}$ in the former case and $\frac{2+\alpha(\tau_k-1)}{1+\alpha(\tau_{k}-1)}$ in the latter case. Therefore, the weights are
\[(r-1)!\prod_{i=1}^{\ell}g_\alpha(t_i)\,\gamma(\tau_k)\varphi(\tau_{k-1})\qquad\text{and}\qquad
(r-1)!\prod_{i=1}^{\ell}g_\alpha(t_i)\,\gamma(\tau_k)\varphi(\tau_{k}),\]
respectively.

We next consider (2). In a chain from $\emptyset$ to $(2,1^{r-1})$, the boxes of the first column are added one at a time. The box $(1,2)$, which carries $s_{21}$, is added either alone right after the $v$-th box of the first column or together with it $(1\le v\le r)$.
For a parameter $\beta$ and $m,v\ge1$, we have
\begin{align*}
\psi_{(1^m)/(1^{m-1})}(\beta)&=m, &
\psi_{(2,1^{m})/(2,1^{m-1})}(\beta)&=\frac{m(\beta+m+1)}{\beta+m},\\
\psi_{(2,1^{v-1})/(1^{v})}(\beta)&=\frac{2\beta+v-1}{\beta+v-1}, &
\psi_{(2,1^{v-1})/(1^{v-1})}(\beta)&=\beta+v.
\end{align*}
Therefore, the weights of the two kinds of chains are
\[(r-1)!\,\frac{v(2\beta+v-1)(\beta+r)}{(\beta+v-1)(\beta+v)}\qquad\text{and}\qquad(r-1)!\,(\beta+r),\]
respectively. Putting $\beta=\frac1\alpha$, we obtain the assertion.
\end{proof}

\begin{lemma}\label{lem:telescope}
For integers $a,t\ge0$, we have
\begin{equation}\label{eqn:telescope}
\sum_{v=a}^{a+t}\frac{v\bigl(2+\alpha(v-1)\bigr)}{(1+\alpha v)\bigl(1+\alpha(v-1)\bigr)}
=\frac{(a+t)(a+t+1)}{1+\alpha(a+t)}-\frac{a(a-1)}{1+\alpha(a-1)}.
\end{equation}
Consequently, we have
\begin{equation}\label{eqn:telescope2}
\frac{1+\alpha t}{t+1}\sum_{v=a}^{a+t}\gamma(v)\varphi(v)-t(1+\alpha r)=\gamma(a+t)\varphi(a).
\end{equation}
\end{lemma}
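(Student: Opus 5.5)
For \eqref{eqn:telescope} the plan is to telescope. Put
\[G(v)=\frac{v(v+1)}{1+\alpha v},\]
so that the right-hand side of \eqref{eqn:telescope} is $G(a+t)-G(a-1)$; note that $G(a-1)=\frac{(a-1)a}{1+\alpha(a-1)}$ is exactly the subtracted term. It then suffices to check the single-step identity
\[G(v)-G(v-1)=\frac{v\bigl(2+\alpha(v-1)\bigr)}{(1+\alpha v)\bigl(1+\alpha(v-1)\bigr)},\]
and sum it over $v=a,\ldots,a+t$. To verify it, put the difference over the common denominator $(1+\alpha v)(1+\alpha(v-1))$. The numerator is
\[v\Bigl[(v+1)\bigl(1+\alpha(v-1)\bigr)-(v-1)(1+\alpha v)\Bigr].\]
Expanding the bracket gives $(v+1)+\alpha(v^2-1)-(v-1)-\alpha(v^2-v)=2+\alpha(v-1)$, as required. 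The case $a=0$ needs no separate treatment, since the $v=0$ term vanishes and $G(-1)=0$.

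For \eqref{eqn:telescope2}, note from the definitions that
\[\gamma(v)\varphi(v)=(1+\alpha r)\frac{v\bigl(2+\alpha(v-1)\bigr)}{(1+\alpha v)\bigl(1+\alpha(v-1)\bigr)}.\]
Hence \eqref{eqn:telescope} gives
\[\sum_{v=a}^{a+t}\gamma(v)\varphi(v)=(1+\alpha r)\bigl(G(a+t)-G(a-1)\bigr).\]
After dividing by $1+\alpha r$, the claim becomes
\[\frac{1+\alpha t}{t+1}\bigl(G(a+t)-G(a-1)\bigr)-t=\frac{a\bigl(2+\alpha(a+t-1)\bigr)}{\bigl(1+\alpha(a+t)\bigr)\bigl(1+\alpha(a-1)\bigr)}.\]
I would verify this by clearing the denominator $(t+1)(1+\alpha(a+t))(1+\alpha(a-1))$ and comparing the resulting polynomials in $a$, $t$ and $\alpha$.

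This last polynomial identity is the only real obstacle, and it is where I am least confident. The factor $\frac{1+\alpha t}{t+1}$ does not cancel in any obvious way, so the statement may have been transcribed with a different normalization. Before expanding, I would test small cases: $a=0$ gives left-hand side $\frac{1+\alpha t}{t+1}\cdot\frac{t(t+1)}{1+\alpha t}-t=0$, matching $\varphi(0)=0$, which is a good sign. Next I would try $t=0$, where the left-hand side is $\gamma(a)\varphi(a)$ and the right-hand side is $\gamma(a)\varphi(a)$, so that case holds trivially. After that, I would check $t=1$ with general $a$.

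If these checks pass, I would carry out the expansion collecting powers of $\alpha$, which should be routine. If they fail, I would conclude that $\varphi(v)$ enters with the weights $g_\alpha(t_i)$ from Proposition \ref{prop:r1explicit} rather than this literal form, and adjust the statement accordingly.
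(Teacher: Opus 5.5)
Your argument is the paper's own proof. Your $G(v)$ is its $\theta(v+1)$, where $\theta(v)=\frac{v(v-1)}{1+\alpha(v-1)}$, and the paper also reduces \eqref{eqn:telescope2} to exactly your rational identity, which it settles by ``a direct calculation''.

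That calculation does go through, so you do not need to adjust the statement. The factor $t+1$ that worried you cancels. With $N=a+t$, the numerator of $G(a+t)-G(a-1)$ over $\bigl(1+\alpha N\bigr)\bigl(1+\alpha(a-1)\bigr)$ is
\[
N(N+1)\bigl(1+\alpha(a-1)\bigr)-a(a-1)(1+\alpha N)=(t+1)\bigl(t+2a+\alpha N(a-1)\bigr).
\]
After multiplying by $\frac{1+\alpha t}{t+1}$ and subtracting $t$, the numerator becomes
\[
(1+\alpha t)\bigl(t+2a+\alpha N(a-1)\bigr)-t(1+\alpha N)\bigl(1+\alpha(a-1)\bigr)=a\bigl(2+\alpha(a+t-1)\bigr),
\]
which is the required numerator.
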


\begin{proof}
Let $\theta(v)=\frac{v(v-1)}{1+\alpha(v-1)}$. Then we have
\[\theta(v+1)-\theta(v)=\frac{v(v+1)}{1+\alpha v}-\frac{v(v-1)}{1+\alpha(v-1)}
=\frac{v\bigl(2+\alpha(v-1)\bigr)}{(1+\alpha v)\bigl(1+\alpha(v-1)\bigr)}.\]
Therefore, the left-hand side of \eqref{eqn:telescope} is equal to $\theta(a+t+1)-\theta(a)$, and we obtain \eqref{eqn:telescope}. Since $\gamma(v)\varphi(v)$ is $1+\alpha r$ times the summand of \eqref{eqn:telescope}, the identity \eqref{eqn:telescope2} is equivalent to
\[\frac{1+\alpha t}{t+1}\left(\frac{(a+t)(a+t+1)}{1+\alpha(a+t)}-\frac{a(a-1)}{1+\alpha(a-1)}\right)-t
=\frac{a\bigl(2+\alpha(a+t-1)\bigr)}{\bigl(1+\alpha(a+t)\bigr)\bigl(1+\alpha(a-1)\bigr)},\]
which follows from a direct calculation.
\end{proof}

\begin{proof}[Direct proof of Theorem \ref{thm:general} for $\lambda=(r,1)$]
We apply $\omega_{-\frac1\alpha}$ to Proposition \ref{prop:r1explicit}(2) and compare the coefficients with those of Proposition \ref{prop:r1explicit}(1). Let $\boldsymbol{t}=(t_1,\ldots,t_\ell)$ be a composition of $r$.
Since $\binom{-1/\alpha}{t}=(-\alpha)^{-t}g_\alpha(t)$, we have
\[\prod_{i=1}^{\ell}\binom{-1/\alpha}{t_i}=(-\alpha)^{-r}\prod_{i=1}^{\ell}g_\alpha(t_i).\]

First, the index $\pmb u^{\rm II}_k(\boldsymbol{t})$ is obtained by merging only from the term with $v=\tau_k$ of the first sum in Proposition \ref{prop:r1explicit}(2), with the coefficient $\binom{-1/\alpha}{1}\prod_i\binom{-1/\alpha}{t_i}$. Therefore, its coefficient after applying $\omega_{-\frac1\alpha}$ is
\[(-\alpha)^{-r-1}(r-1)!\prod_{i=1}^{\ell}g_\alpha(t_i)\,\gamma(\tau_k)\varphi(\tau_k).\]

Next, the index $\pmb u^{\rm I}_k(\boldsymbol{t})$ is obtained from the terms with $\tau_{k-1}\le v\le \tau_k$ of the first sum, where $t_k+1$ entries are merged into the $k$-th entry, and from the terms with $\tau_{k-1}<v\le \tau_k$ of the second sum, where $t_k$ entries are merged into the $k$-th entry.
Here, the term with $v=0$ does not occur, but we may include it in the first range because $\varphi(0)=0$. Since $\binom{-1/\alpha}{t_k+1}=-\frac{1+\alpha t_k}{\alpha(t_k+1)}\binom{-1/\alpha}{t_k}$, the coefficient of $\zeta\bigl(\pmb u^{\rm I}_k(\boldsymbol{t})\bigr)$ is
\begin{align*}
&(r-1)!\prod_{i=1}^{\ell}\binom{-1/\alpha}{t_i}\cdot\left(-\frac{1}{\alpha}\right)
\left(\frac{1+\alpha t_k}{t_k+1}\sum_{v=\tau_{k-1}}^{\tau_k}\gamma(v)\varphi(v)-t_k(1+\alpha r)\right)\\
&=(-\alpha)^{-r-1}(r-1)!\prod_{i=1}^{\ell}g_\alpha(t_i)\,\gamma(\tau_k)\varphi(\tau_{k-1})
\end{align*}
by \eqref{eqn:telescope2} with $a=\tau_{k-1}$ and $t=t_k$.

In both cases, the coefficient is $(-\alpha)^{-r-1}$ times that in Proposition \ref{prop:r1explicit}(1). Moreover, no other index appears with a nonzero coefficient, because $\gamma(\tau_1)\varphi(\tau_0)=0$. Multiplying by $(-1)^{r+1}$, we have
\[(-1)^{r+1}\omega_{-\frac{1}{\alpha}}\left(\zeta_{(2,1^{r-1})}^J\left(\pmb s ';\frac{1}{\alpha}\right)\right)=\frac{1}{\alpha^{r+1}}\zeta_{(r,1)}^J\left(\pmb s;\alpha\right).\qedhere\]
\end{proof}

\section*{Acknowledgement}
This work was supported by a Grant-in-Aid for Scientific Research (C) (Grant Number: JP22K03274) and Grant-in-Aid for Early-Career Scientists (Grant Number: JP22K13900).

\end{document}